\newtheorem{theorem}{Theorem}[section]
\newtheorem{corollary}{Corollary}[section]
\newtheorem{proposition}{Proposition}[section]
\newtheorem{lemma}{Lemma}[section]
\newtheorem{definition}{Definition}[section]
\newtheorem{remark}{Remark}[section]
\newtheorem{example}{Example}[section]
\begin{document}

\title[Uniformisation of foliations by curves]
{UNIFORMISATION OF FOLIATIONS BY CURVES}

\author{Marco Brunella}

\address{Marco Brunella, IMB - CNRS UMR 5584, 9 Avenue Savary,
21078 Dijon, France}

\begin{abstract}
These lecture notes provide a full discussion of certain analytic
aspects of the uniformisation theory of foliations by curves on
compact K\"ahler manifolds, with emphasis on their consequences on
positivity properties of the corresponding canonical bundles.
\end{abstract}

\maketitle

\tableofcontents

\section{Foliations by curves and their uniformisation}

Let $X$ be a complex manifold. A {\bf foliation by curves} $\mathcal
F$ on $X$ is defined by a holomorphic line bundle $T_\mathcal F$ on
$X$ and a holomorphic linear morphism
$$\tau_\mathcal F : T_\mathcal F \rightarrow TX$$
which is injective outside an analytic subset $Sing({\mathcal
F})\subset X$ of codimension at least 2, called the {\bf singular
set} of the foliation. Equivalently, we have an open covering $\{
U_j\}$ of $X$ and a collection of holomorphic vector fields
$v_j\in\Theta (U_j)$, with zero set of codimension at least 2, such
that
$$v_j = g_{jk}v_k \qquad {\rm on}\quad U_j\cap U_k ,$$
where $g_{jk}\in{\mathcal O}^*(U_j\cap U_k)$ is a multiplicative
cocycle defining the dual bundle $T_\mathcal F^* = K_\mathcal F$,
called the {\bf canonical bundle} of $\mathcal F$.

These vector fields can be locally integrated, and by the relations
above these local integral curves can be glued together (without
respecting the time parametrization), giving rise to the {\bf
leaves} of the foliation $\mathcal F$.

By the classical Uniformisation Theorem, the universal covering of
each leaf is either the unit disc $\mathbb D$ (hyperbolic leaf) or
the affine line $\mathbb C$ (parabolic leaf) or the projective line
$\mathbb P$ (rational leaf).

In these notes we shall assume that the ambient manifold $X$ is a
compact connected K\"ahler manifold, and we will be concerned with
the following problem: how the universal covering $\widetilde{L_p}$
of the leaf $L_p$ through the point $p$ depends on $p$ ? For
instance, we may first of all ask about the structure of the subset
of $X$ formed by those points through which the leaf is hyperbolic,
resp. parabolic, resp. rational: is the set of hyperbolic leaves
open in $X$? Is the set of parabolic leaves analytic? But, even if
all the leaves are, say, hyperbolic, there are further basic
questions: the uniformising map of every leaf is almost unique
(unique modulo automorphisms of the disc), and after some
normalization (to get uniqueness) we may ask about the way in which
the uniformising map of $L_p$ depends on the point $p$.
Equivalently, we may put on every leaf its Poincar\'e metric, and we
may ask about the way in which this leafwise metric varies in the
directions transverse to the foliation.

Our main result will be that these universal coverings of leaves can
be glued together in a vaguely ``holomorphically convex'' way. That
is, the {\it leafwise} universal covering of the foliated manifold
$(X,{\mathcal F})$ can be defined and it has a sort of
``holomorphically convex'' structure \cite{Br2} \cite{Br3}. This was
inspired by a seminal work of Il'yashenko \cite{Il1} \cite{Il2}, who
proved a similar result when $X$ is a Stein manifold instead of a
compact K\"ahler one. Related ideas can also be found in Suzuki's
paper \cite{Suz}, still in the Stein case. Another source of
inspiration was Shafarevich conjecture on the holomorphic convexity
of universal coverings of projective (or compact K\"ahler) manifolds
\cite{Nap}.

This main result will allow us to apply results by Nishino
\cite{Nis} and Yamaguchi \cite{Ya1} \cite{Ya2} \cite{Ya3} \cite{Kiz}
concerning the transverse variation of the leafwise Poincar\'e
metric and other analytic invariants. As a consequence of this, for
instance, we shall obtain that if the foliation has at least one
hyperbolic leaf, then: (1) there are no rational leaves; (2)
parabolic leaves fill a subset of $X$ which is {\it complete
pluripolar}, i.e. locally given by the poles of a plurisubharmonic
function. In other words, the set of hyperbolic leaves of $\mathcal
F$ is either empty or potential-theoretically full in $X$.

These results are related also to positivity properties of the
canonical bundle $K_{\mathcal F}$, along a tradition opened by
Arakelov \cite{Ara} \cite{BPV} in the case of algebraic fibrations
by curves and developed by Miyaoka \cite{Miy} \cite{ShB} and then
McQuillan and Bogomolov \cite{MQ1} \cite{MQ2} \cite{BMQ} \cite{Br1}
in the case of foliations on projective manifolds. From this point
of view, our final result is the following ruledness criterion for
foliations:

\begin{theorem} \label{main} \cite{Br3} \cite{Br5}
Let $X$ be a compact connected K\"ahler manifold and let $\mathcal
F$ be a foliation by curves on $X$. Suppose that the canonical
bundle $K_\mathcal F$ is {\rm not} pseudoeffective. Then through
every point $p\in X$ there exists a rational curve tangent to
$\mathcal F$.
\end{theorem}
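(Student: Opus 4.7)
I would argue the contrapositive: if $K_\mathcal F$ is not pseudoeffective, then \emph{every} leaf of $\mathcal F$ is rational. Once this is established, the desired conclusion follows from a standard cycle-space argument: the closures of rational leaves form a compact family in the Douady--Barlet space of $X$ (rational curves tangent to $\mathcal F$ with bounded degree with respect to a K\"ahler class), the evaluation map from the associated universal family is proper with analytic image, and this image must then be all of $X$, including the singular locus $\mathrm{Sing}(\mathcal F)$.

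\textbf{Reduction via leafwise uniformisation.} The main tools are the leafwise universal covering $\widetilde{X_\mathcal F}$ and the transverse variation theorems of Nishino--Yamaguchi announced in the introduction. Suppose not every leaf is rational. Two sub-cases arise. If some leaf is hyperbolic, the leafwise Poincar\'e metric $g_P$ on $T_\mathcal F$ has curvature $-1$ along leaves, and by the Nishino--Yamaguchi transverse plurisubharmonic variation of $\log g_P$, its dual defines a singular Hermitian metric on $K_\mathcal F$ with positive curvature current on the hyperbolic locus $H \subset X$; the complement of $H$ is a complete pluripolar set by the corollary announced in the introduction, so the metric extends across $X$ and realizes $K_\mathcal F$ as pseudoeffective. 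If instead no leaf is hyperbolic but some leaf is parabolic, one invokes the parabolic analogue (Nishino's theorem): fixing a transverse holomorphic section and using a Robin-type function attached to the parabolic uniformisations, one obtains again a psh weight, hence a singular positive Hermitian metric on $K_\mathcal F$ away from the analytic locus of rational leaves, which then extends by standard Hartogs-type arguments. In either case $K_\mathcal F$ is pseudoeffective, contradicting the hypothesis. Hence every leaf is rational, completing the reduction.

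\textbf{Main obstacle.} The technical heart is the construction and global extension of the positive singular Hermitian metric on $K_\mathcal F$ when leaves of different uniformisation types coexist. The hyperbolic sub-case is relatively direct once the leafwise universal covering and the transverse psh variation of the Poincar\'e metric are in hand. The purely parabolic sub-case is more delicate: one must choose a normalization for the parabolic uniformisation (to kill the ambiguity in $\mathrm{Aut}(\mathbb C)$, e.g.\ by fixing a germ of transversal), and one must analyze the behavior of the Robin function near any rational leaves that may be present, so as to guarantee that the resulting weight defines a genuine positive current on all of $X$. All of these inputs are precisely the analytic content of the main uniformisation theorem promised in the introduction; once they are available, the final step passing from ``all leaves rational'' to ``a rational curve through every point'' is a routine properness statement in the Douady space.
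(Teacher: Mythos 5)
Your reduction ``$K_{\mathcal F}$ not pseudoeffective $\Rightarrow$ every leaf is rational'' is \emph{false}, and the parabolic sub-case of your argument cannot work. The standard counterexample is in the paper: the radial foliation on $\mathbb{CP}^n$, generated in an affine chart by $\sum z_j\partial/\partial z_j$. Here $K_{\mathcal F}\simeq\mathcal O(-1)$ is not pseudoeffective, yet every leaf $L_p$ is parabolic ($\widetilde{L_p}\simeq\mathbb C$, since the leaves are punctured lines and the end at the origin is not a vanishing end because the blow-down $(\theta,t)\mapsto t\theta$ fails to be a meromorphic \emph{immersion}). What \emph{is} true is that there is a rational curve tangent to $\mathcal F$ through every point (the line through $p$ and $0$), but that rational curve is the closure of a leaf, not a rational leaf. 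The theorem's conclusion concerns rational curves tangent to $\mathcal F$, not rational leaves; conflating the two is the central error.

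Consequently, the parabolic half of your argument — ``no hyperbolic leaf but some parabolic leaf $\Rightarrow$ a Robin-type psh weight makes $K_{\mathcal F}$ pseudoeffective'' — proves something false, and no such construction can exist. The paper's route for parabolic foliations is genuinely different and does not attempt to produce a positive metric on $K_{\mathcal F}$. Instead, it (i) shows (using the Levi-flat convexity of covering tubes together with the Nishino--Yamaguchi variation in the annular form) that the global covering tube $U_{\mathcal F}$ is biholomorphic to the total space of $T_{\mathcal F}|_{X^0}$; (ii) proves a Dingoyan-style meromorphic-extension theorem: if a line bundle $L$ is not pseudoeffective, any meromorphic map from a punctured tubular neighbourhood of the zero section of $L$ into a compact K\"ahler manifold extends meromorphically across the zero section; and (iii) applies (ii) to the section at infinity of $E_{\mathcal F}$ (whose normal bundle is $K_{\mathcal F}$) to extend $\Pi_{\mathcal F}$ to a meromorphic map $\overline{\Pi_{\mathcal F}}:\overline{E_{\mathcal F}}\dashrightarrow X$ from the $\mathbb P^1$-bundle compactification. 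The images of the $\mathbb P^1$-fibers are then the desired rational curves tangent to $\mathcal F$, through every point of $X$ including $\mathrm{Sing}(\mathcal F)$. Your hyperbolic sub-case is essentially correct and matches Theorem~\ref{hyperbolic}, and your remark that the rational quasi-fibration case is handled by the cycle space of the fibration is also fine; but the heart of the proof is the parabolic case, and there your argument has to be replaced.
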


Recall that a line bundle on a compact connected manifold is {\it
pseudoeffective} if it admits a (singular) hermitian metric with
positive curvature in the sense of currents \cite{Dem}. When $X$ is
projective the above theorem follows also from results of \cite{BMQ}
and \cite{BDP}, but with a totally different proof, untranslatable
in our K\"ahler context.

Let us now describe in more detail the content of these notes.

In Section 2 we shall recall the results by Nishino and Yamaguchi on
Stein fibrations that we shall use later, and also some of
Il'yashenko's results. In Section 3 and 4 we construct the leafwise
universal covering of $(X,{\mathcal F})$: we give an appropriate
definition of leaf $L_p$ of $\mathcal F$ through a point $p\in
X\setminus Sing({\mathcal F})$ (this requires some care, because
some leaves are allowed to pass through some singular points), and
we show that the universal coverings $\widetilde{L_p}$ can be glued
together to get a complex manifold. In Section 5 we prove that the
complex manifold so constructed enjoys some ``holomorphic
convexity'' property. This is used in Section 6 and 8, together with
Nishino and Yamaguchi results, to prove (among other things) Theorem
\ref{main} above. The parabolic case requires also an extension
theorem for certain meromorphic maps into compact K\"ahler
manifolds, which is proved in Section 7.

All this work has been developed in our previous papers \cite{Br2}
\cite{Br3} \cite{Br4} and \cite{Br5} (with few imprecisions which
will be corrected here). Further results and application can be
found in \cite{Br6} and \cite{Br7}.

\section{Some results on Stein fibrations}

\subsection{Hyperbolic fibrations}

In a series of papers, Nishino \cite{Nis} and then Yamaguchi
\cite{Ya1} \cite{Ya2} \cite{Ya3} studied the following situation. It
is given a Stein manifold $U$, of dimension $n+1$, equipped with a
holomorphic submersion $P:U\rightarrow {\mathbb D}^n$ with connected
fibers. Each fiber $P^{-1}(z)$ is thus a smooth connected curve, and
as such it has several potential theoretic invariants (Green
functions, Bergman Kernels, harmonic moduli...). One is interested
in knowing how these invariants vary with $z$, and then in using
this knowledge to obtain some information on the structure of $U$.

For our purposes, the last step in this program has been carried out
by Kizuka \cite{Kiz}, in the following form.

\begin{theorem} \label{kizuka}
\cite{Ya1} \cite{Ya3} \cite{Kiz} If $U$ is Stein, then the fiberwise
Poincar\'e metric on $U\buildrel P\over\rightarrow {\mathbb D}^n$
has a plurisubharmonic variation.
\end{theorem}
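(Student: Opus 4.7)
\emph{Proof plan.} Plurisubharmonicity being a local property, it suffices to work in a chart on $U$ with coordinates $(z,\zeta)\in\mathbb{D}^n\times\mathbb{D}$ in which $P(z,\zeta)=z$, and to show that $\log\lambda(z,\zeta)$ is plurisubharmonic on the chart, where $\lambda(z,\zeta)\,|d\zeta|$ is the Poincar\'e metric on $R_z=P^{-1}(z)$ (with the convention $\log\lambda\equiv-\infty$ on parabolic fibers).

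The main technical input, and the principal obstacle, is Yamaguchi's theorem on plurisubharmonic variation of fiberwise Green functions in a Stein fibration: for any local holomorphic section $\sigma:\mathbb{D}^n\to U$, the function
$$\Phi(z,\zeta) := -g_{R_z}(\zeta,\sigma(z))$$
is plurisubharmonic on $U$, with a logarithmic pole along $\sigma(\mathbb{D}^n)$; here $g_{R_z}(\cdot,\sigma(z))\geq 0$ is the positive Green function of the fiber with pole at the section. The Stein hypothesis on $U$ enters essentially here, through the second-variation formula for fiberwise Green functions combined with a Hartogs-type pseudoconvexity argument on the total space.

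Granted Yamaguchi's theorem, the simply connected case follows via the Riemann map. Taking $\varphi_z:R_z\to\mathbb{D}$ with $\varphi_z(\sigma(z))=0$, one has $g_{R_z}(\zeta,\sigma(z))=-\log|\varphi_z(\zeta)|$, the Robin constant satisfies $\gamma(z)=-\log|\varphi_z'(\sigma(z))|$, and the Poincar\'e density at the section is $\lambda(z,\sigma(z))=2|\varphi_z'(\sigma(z))|=2e^{-\gamma(z)}$, so $\log\lambda(z,\sigma(z))=\log 2-\gamma(z)\in\mathrm{PSH}$. Invoking Yamaguchi's theorem with sections through every point of $U$ (equivalently, its parameterized form applied to the fibered product $U\times_{\mathbb{D}^n}U\to U$) upgrades this to joint plurisubharmonicity of $\log\lambda$ on $U$, whenever all fibers are simply connected.

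The general (multiply connected) case is Kizuka's contribution and is more delicate: the identity $\log\lambda=\log 2-\gamma$ breaks down, because the Robin constant of the Green function records the logarithmic capacity of the ideal boundary rather than the hyperbolic radius of the universal cover. Kizuka's strategy is an approximation: exhaust each fiber $R_z$ by relatively compact subdomains $D_z^{(k)}\Subset R_z$ chosen so that the total spaces $U^{(k)}:=\bigcup_z D_z^{(k)}$ remain Stein; at each finite stage, reduce to the simply connected case through a simultaneous uniformization of the family; and pass to the decreasing limit $\lambda_{D^{(k)}}\searrow\lambda_R$ (Schwarz--Pick monotonicity), using that decreasing limits of plurisubharmonic functions remain plurisubharmonic. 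Parabolic fibers are automatically accommodated in $\{\log\lambda=-\infty\}$.
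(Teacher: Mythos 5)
Your high-level picture matches the paper's: the paper also passes from the fiberwise Poincar\'e density along a section to the fiberwise Green function and Robin constant via the Riemann map, so that (with the $\log 2$ normalizing constant aside) the claim reduces to superharmonicity of the Robin constant $z\mapsto\gamma(z)$. However, the paper is explicit that it only proves a particular case --- Proposition \ref{yamaguchi}, where $U_0$ has real-analytic, pseudoconvex boundary transverse to the fibers and each fiber of $U_0$ is a relatively compact disc --- and defers the rest of Theorem \ref{kizuka} to \cite{Ya1} \cite{Ya3} \cite{Kiz}. In that special case, the entire content is precisely the step you label ``Yamaguchi's theorem'' and treat as a black box: the paper establishes superharmonicity of $\gamma$ by extending $g(z,\cdot)$ real-analytically across $\partial U_0$, applying Green's formula to express $\partial^2\gamma/\partial z\partial\bar z$ as a boundary integral of $\partial^2 g/\partial z\partial\bar z$, bounding that Laplacian at boundary points via the Levi-form inequality coming from pseudoconvexity of $\{g=0\}=\partial U_0$, and integrating by parts (Stokes) to get the nonpositive sign. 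Your appeal to ``the second-variation formula for fiberwise Green functions combined with a Hartogs-type pseudoconvexity argument'' gestures at this but does not carry it out; since this is exactly where the Stein/pseudoconvexity hypothesis does its work, your proposal as written postpones the essential step rather than proving it.

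Two smaller points. First, your sketch of the multiply connected case is a bit off: exhausting a multiply connected fiber by relatively compact subdomains does not produce simply connected subdomains, so there is no ``reduction to the simply connected case'' at each finite stage by exhaustion alone. What actually happens (as the paper notes after Proposition \ref{yamaguchi}) is that the same Robin-constant argument on multiply connected fibers yields plurisubharmonic variation of the fiberwise \emph{Bergman} metric, and Kizuka's contribution \cite{Kiz} is a covering argument converting this into the Poincar\'e statement; this is orthogonal to the exhaustion used afterwards to remove the transversality and relative-compactness assumptions on the boundary. Second, your fibered-product trick $U\times_{\mathbb{D}^n}U\to U$ to upgrade section-wise PSH to joint PSH of $\log\lambda$ is a legitimate and tidy observation that is not in the paper, but it relies on the section-wise statement already being established with a Stein total space, so it does not bypass the main computation.
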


This means the following. On each fiber $P^{-1}(z)$, $z\in{\mathbb
D}^n$, we put its Poincar\'e metric, i.e. the (unique) complete
hermitian metric of curvature $-1$ if $P^{-1}(z)$ is uniformised by
${\mathbb D}$, or the identically zero ``metric'' if $P^{-1}(z)$ is
uniformised by ${\mathbb C}$ ($U$ being Stein, there are no other
possibilities). If $v$ is a holomorphic nonvanishing vector field,
defined in some open subset $V\subset U$ and tangent to the fibers
of $P$, then we can take the function on $V$
$$F = \log \| v \|_{Poin}$$
where, for every $q\in V$, $\| v(q)\|_{Poin}$ is the norm of $v(q)$
evaluated with the Poincar\'e metric on $P^{-1}(P(q))$. The
statement above means that, whatever $v$ is, the function $F$ is
{\it plurisubharmonic}, or identically $-\infty$ if all the fibers
are parabolic. Note that if we replace $v$ by $v'=g\cdot v$, with
$g$ a holomorphic nonvanishing function on $V$, then $F$ is replaced
by $F'=F+G$, where $G=\log |g|$ is pluriharmonic. A more intrinsic
way to state this property is: the fiberwise Poincar\'e metric (if
not identically zero) defines on the relative canonical bundle of
$U\buildrel P\over\rightarrow {\mathbb D}^n$ a hermitian metric
(possibly singular) whose curvature is a {\it positive current}
\cite{Dem}. Note also that the plurisubharmonicity of $F$ along the
fibers is just a restatement of the negativity of the curvature of
the Poincar\'e metric. The important fact here is the
plurisubharmonicity along the directions transverse to the fibers,
whence the {\it variation} terminology.

Remark that the poles of $F$ correspond exactly to parabolic fibers
of $U$. We therefore obtain the following dichotomy: either all the
fibers are parabolic ($F\equiv 0$), or the parabolic fibers
correspond to a complete pluripolar subset of ${\mathbb D}^n$
($F\not\equiv 0$).

The theorem above is a generalization of, and was motivated by, a
classical result of Hartogs \cite[II.5]{Ran}, asserting (in modern
language) that a domain $U$ in ${\mathbb D}^n\times{\mathbb C}$ of
the form (Hartogs tube)
$$U = \{\ (z,w)\ \vert \quad |w| < e^{-f(z)} \} ,$$
where $f:{\mathbb D}^n \rightarrow [-\infty ,+\infty )$ is an upper
semicontinuous function, is Stein if and only if $f$ is
plurisubharmonic. Indeed, in this special case the Poincar\'e metric
is easily computed, and one checks that the plurisubharmonicity of
$f$ is equivalent to the plurisubharmonic variation of the fiberwise
Poincar\'e metric. This special case suggests also that some
converse statement to Theorem \ref{kizuka} could be true.

We give the proof of Theorem \ref{kizuka} only in a particular case,
which is anyway the only case that we shall actually use.

We start with a fibration $P:U\rightarrow {\mathbb D}^n$ as above,
but without assuming $U$ Stein. We consider an open subset
$U_0\subset U$ such that:
\begin{enumerate}
\item[(i)] for every $z\in{\mathbb D}^n$, the intersection $U_0\cap
P^{-1}(z)$ is a disc, relatively compact in the fiber $P^{-1}(z)$;
\item[(ii)] the boundary $\partial U_0$ is real analytic and transverse
to the fibers of $P$;
\item[(iii)] the boundary $\partial U_0$ is pseudoconvex in $U$.
\end{enumerate}
Then we restrict our attention to the fibration by discs $P_0 =
P\vert_{U_0} : U_0\rightarrow {\mathbb D}^n$. It is not difficult to
see that $U_0$ is Stein, but this fact will not really be used
below.

\begin{proposition} \label{yamaguchi} \cite{Ya1} \cite{Ya3}
The fiberwise Poincar\'e metric on $U_0\buildrel P_0\over\rightarrow
{\mathbb D}^n$ has a plurisubharmonic variation.
\end{proposition}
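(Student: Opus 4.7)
The plan is to reduce the proposition to plurisubharmonicity of the Robin constant of the fiber Green functions, and then to establish the latter via a Hadamard--Yamaguchi variational formula whose positivity rests on the pseudoconvexity hypothesis~(iii).

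Since the claim is local on $\mathbb{D}^n$, I would shrink the base and choose a holomorphic section $\sigma:\mathbb{D}^n\to U_0$ together with a nowhere-vanishing vertical holomorphic vector field $v$ on a neighbourhood of $\sigma(\mathbb{D}^n)$. For each $z$, let $\phi_z:D_z\to\mathbb{D}$ (with $D_z=U_0\cap P^{-1}(z)$) be the Riemann map normalized by $\phi_z(\sigma(z))=0$ and $d\phi_z(v(\sigma(z)))>0$; a direct computation with the Poincar\'e metric of the unit disc gives
\[
F(z):=\log\|v(\sigma(z))\|_{\mathrm{Poin}} \;=\; \gamma(z),
\]
where $\gamma(z):=\log\bigl(d\phi_z(v(\sigma(z)))\bigr)$ is the Robin constant at $\sigma(z)$ of the fiber Green function $g_z=\log|\phi_z|$. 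Replacing $\sigma$ by a competing section or multiplying $v$ by a nowhere-vanishing holomorphic function alters $F$ only by a pluriharmonic term, so it suffices to prove that $\gamma$ is plurisubharmonic along a single arbitrary section.

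To prove that $\gamma$ is plurisubharmonic I would carry out the classical variational computation. Hypothesis~(ii) together with the real-analyticity of $\partial U_0$ ensures that $g_z$ and its normal derivative depend smoothly on $z$ up to the boundary, which legitimizes the differentiations that follow. Differentiating in $z$ the boundary identity $g_z\equiv 0$ on $\partial D_z$ (parameterizing $\partial D_z$ smoothly in $z$ via the transversality in~(ii)) and the asymptotic expansion $g_z(w)=\log|w-\sigma(z)|+\gamma(z)+o(1)$ near $\sigma(z)$, then combining with Green's identity on each fiber and integrating by parts, one arrives at a formula schematically of the shape
\[
\partial_i\bar\partial_j\,\gamma(z_0)\;=\;\int_{\partial D_{z_0}}\mathcal{L}_{ij}(\rho)\,\Bigl|\tfrac{\partial g_{z_0}}{\partial\nu}\Bigr|^{2}\,dS\;+\;\int_{D_{z_0}}H_{ij}(w)\,dA(w),
\]
where $\rho$ is a defining function for $\partial U_0$, $\mathcal{L}_{ij}(\rho)$ is its Levi form evaluated on horizontal lifts of $\partial/\partial z_i,\partial/\partial z_j$ to $\partial U_0$, and $H_{ij}$ is a manifestly positive-semidefinite Hermitian matrix (a sum of squared moduli of certain $\bar\partial_w$-expressions associated to those lifts).

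Both contributions are nonnegative as Hermitian forms in $(i,j)$: the interior piece automatically, the boundary piece precisely because of the pseudoconvexity hypothesis~(iii). Hence $\partial\bar\partial\gamma\geq 0$ in the sense of currents, which is the desired plurisubharmonic variation. The main obstacle will be the bookkeeping for the variational formula -- in particular the integration-by-parts manipulation that isolates the Levi form of $\partial U_0$ as the boundary integrand -- and the verification that the smoothness up to $\partial U_0$ granted by~(ii) and real-analyticity suffices to make every differentiation rigorous; this is the technical heart of the Nishino--Yamaguchi--Kizuka approach, and it is exactly there that hypotheses (ii) and (iii) enter in an essential way.
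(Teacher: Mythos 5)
Your proposal follows essentially the same route as the paper: reduce to showing the Robin constant of the fiberwise Green function is plurisubharmonic along holomorphic sections, then establish this via the Hadamard--Yamaguchi variational formula, where the pseudoconvexity of $\partial U_0$ (via the Levi form of the defining function $-g$) forces the right sign on the boundary term and an integration by parts isolates a manifestly nonnegative interior term. One small slip worth flagging: your remark that replacing $\sigma$ by a competing section ``alters $F$ only by a pluriharmonic term'' is false (the Robin constant genuinely depends on the pole location, not merely up to pluriharmonic corrections --- e.g.\ for the fixed disc $\mathbb D$ the difference is already strictly plurisubharmonic); the reduction to a single arbitrary section is nevertheless legitimate, simply because plurisubharmonicity is a statement about restrictions to all holomorphic discs, and every disc transverse to the fibration is locally the graph of a section, while the fiber direction is handled by the negative curvature of the Poincar\'e metric.
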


\begin{proof}
It is sufficient to consider the case $n=1$. The statement is local
on the base, and for every $z_0\in{\mathbb D}$ we can embed a
neighbourhood of $\overline{P_0^{-1}(z_0)}$ in $U$ into ${\mathbb
C}^2$ in such a way that $P$ becomes the projection to the first
coordinate (see, e.g., \cite[\S 3]{Suz}). Thus we may assume that
$U_0\subset {\mathbb D}\times{\mathbb C}$, $P_0(z,w)=z$, and
$P^{-1}(z) = D_z$ is a disc in $\{ z\} \times {\mathbb C} = {\mathbb
C}$, with real analytic boundary, depending on $z$ in a real
analytic and pseudoconvex way.

Take a holomorphic section $\alpha : {\mathbb D}\rightarrow U_0$ and
a holomorphic vertical vector field $v$ along $\alpha$, i.e. for
every $z\in{\mathbb D}$, $v(z)$ is a vector in $T_{\alpha (z)}U_0$
tangent to the fiber over $z$ (and nonvanishing). We need to prove
that $\log \| v(z)\|_{Poin(D_z)}$ is a subharmonic function on
${\mathbb D}$. By another change of coordinates, we may assume that
$\alpha (z) = (z,0)$ and $v(z) = \frac{\partial}{\partial
w}\vert_{(z,0)}$.

For every $z$, let
$$g(z,\cdot ): \overline{D_z}\rightarrow [0,+\infty ]$$ be the Green
function of $D_z$ with pole at $0$. That
is, $g(z,\cdot )$ is harmonic on $D_z\setminus\{ 0\}$, zero on
$\partial D_z$, and around $w=0$ it has the development
$$ g(z,w)= \log \frac{1}{|w|} + \lambda (z) + O(|w|).$$
The constant $\lambda (z)$ (Robin constant) is related to the
Poincar\'e metric of $D_z$: more precisely, we have
$$\lambda (z) = -\log \| \frac{\partial}{\partial
w}\vert_{(z,0)}\|_{Poin(D_z)}$$ (indeed, recall that the Green
function gives the radial part of a uniformisation of $D_z$). Hence,
we are reduced to show that $z\mapsto \lambda (z)$ is {\it
super}harmonic.

Fix $z_0\in {\mathbb D}$. By real analyticity of $\partial U_0$, the
function $g$ is (outside the poles) also real analytic, and thus
extensible (in a real analytic way) beyond $\partial U_0$. This
means that if $z$ is sufficiently close to $z_0$, then $g(z,\cdot )$
is actually defined on $\overline{D_{z_0}}$, and harmonic on
$D_{z_0}\setminus\{ 0\}$. Of course, $g(z,\cdot )$ does not need to
vanish on $\partial D_{z_0}$. The difference $g(z,\cdot ) -
g(z_0,\cdot )$ is harmonic on $D_{z_0}$ (the poles annihilate),
equal to $\lambda (z) - \lambda (z_0)$ at $0$, and equal to
$g(z,\cdot )$ on $\partial D_{z_0}$. By Green formula:
$$\lambda (z) - \lambda (z_0) = -\frac{1}{2\pi}\int_{\partial
D_{z_0}} g(z,w)\frac{\partial g}{\partial n}(z_0,w)ds$$ and
consequently:
$$\frac{\partial^2\lambda}{\partial z\partial\bar z}(z_0) =
-\frac{1}{2\pi}\int_{\partial D_{z_0}} \frac{\partial^2 g}{\partial
z\partial\bar z}(z_0,w)\frac{\partial g}{\partial n}(z_0,w)ds.$$

We now compute the $z$-laplacian of $g(\cdot ,w_0)$ when $w_0$ is a
point of the boundary $\partial D_{z_0}$.

The function $-g$ is, around $(z_0,w_0)$, a defining function for
$U_0$. By pseudoconvexity, the Levi form of $g$ at $(z_0,w_0)$ is
therefore nonpositive on the complex tangent space
$T_{(z_0,w_0)}^{\mathbb C}(\partial U_0)$, i.e. on the Kernel of
$\partial g$ at $(z_0,w_0)$ \cite[II.2]{Ran}. By developing, and
using also the fact that $g$ is $w$-harmonic, we obtain
$$\frac{\partial^2 g}{\partial z\partial\bar z}(z_0,w_0) \le 2 Re
\Big\{ \frac{\frac{\partial^2 g}{\partial w\partial\bar
z}(z_0,w_0)\cdot \frac{\partial g}{\partial
z}(z_0,w_0)}{\frac{\partial g}{\partial w}(z_0,w_0)}\Big\} .$$

We put this inequality into the expression of
$\frac{\partial^2\lambda}{\partial z\partial\bar z}(z_0)$ derived
above from Green formula, and then we apply Stokes theorem. We find
$$\frac{\partial^2\lambda}{\partial z\partial\bar z} (z_0) \le
-\frac{2}{\pi}\int_{D_{z_0}}\big| \frac{\partial^2g}{\partial
w\partial\bar z}(z_0,w)\big|^2 idw\wedge d\bar w \le 0$$ from which
we see that $\lambda $ is superharmonic.
\end{proof}

A similar result can be proved, by the same proof, even when we drop
the simply connectedness hypothesis on the fibers, for instance when
the fibers of $U_0$ are annuli instead of discs; however, the result
is that the Bergman fiberwise metric, and not the Poincar\'e one,
has a plurisubharmonic variation. This is because on a multiply
connected curve the Green function is more directly related to the
Bergman metric \cite{Ya3}. The case of the Poincar\'e metric is done
in \cite{Kiz}, by a covering argument. The general case of Theorem
\ref{kizuka} requires also to understand what happens when $\partial
U_0$ is still pseudoconvex but no more transverse to the fibers, so
that $U_0$ is no more a differentiably trivial family of curves.
This is rather delicate, and it is done in \cite{Ya1}. Then Theorem
\ref{kizuka} is proved by an exhaustion argument.

\subsection{Parabolic fibrations}

Theorem \ref{kizuka}, as stated, is rather empty when all the fibers
are isomorphic to ${\mathbb C}$. However, in that case Nishino
proved that if $U$ is Stein then it is isomorphic to ${\mathbb
D}^n\times {\mathbb C}$ \cite[II]{Nis}. A refinement of this was
found in \cite{Ya2}.

As before, we consider a fibration $P:U\rightarrow {\mathbb D}^n$
and we do not assume that $U$ is Stein. We suppose that there exists
an embedding $j : {\mathbb D}^n\times{\mathbb D}\rightarrow U$ such
that $P\circ j$ coincides with the projection from ${\mathbb
D}^n\times{\mathbb D}$ to ${\mathbb D}^n$ (this can always be done,
up to restricting the base). For every $\varepsilon \in [0,1)$, we
set
$$U_\varepsilon = U\setminus j({\mathbb D}^n \times
\overline{{\mathbb D}(\varepsilon )})$$ with $\overline{{\mathbb
D}(\varepsilon )} = \{ z\in{\mathbb C}\vert\quad |z|\le\varepsilon
\}$, and we denote by
$$P_\varepsilon : U_\varepsilon \rightarrow {\mathbb D}^n$$
the restriction of $P$. Thus, the fibers of $P_\varepsilon$ are
obtained from those of $P$ by removing a closed disc (if
$\varepsilon >0$) or a point (if $\varepsilon =0$).

\begin{theorem}\label{nishino} \cite[II]{Nis} \cite{Ya2}
Suppose that:
\begin{enumerate}
\item[(i)] for every $z\in{\mathbb D}^n$, the fiber $P^{-1}(z)$ is
isomorphic to ${\mathbb C}$;
\item[(ii)] for every $\varepsilon >0$ the fiberwise Poincar\'e
metric on $U_\varepsilon\buildrel P_\varepsilon\over\rightarrow
{\mathbb D}^n$ has a plurisubharmonic variation.
\end{enumerate}
Then $U$ is isomorphic to a product:
$$U\simeq {\mathbb D}^n\times{\mathbb C} .$$
\end{theorem}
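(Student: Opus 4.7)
The plan is to uniformise each fibre canonically using hypothesis (i), producing a fibrewise bijection $\Psi\colon U\to\mathbb{D}^n\times\mathbb{C}$, and then to use hypothesis (ii) to show that $\Psi$ is holomorphic. For each $z\in\mathbb{D}^n$ let $\Phi_z\colon P^{-1}(z)\to\mathbb{C}$ be the unique biholomorphism with $\Phi_z(j(z,0))=0$ and $d\Phi_z(j_\ast(\partial/\partial w)|_{(z,0)})=1$, and set $\Psi(u)=(P(u),\Phi_{P(u)}(u))$. Under $\Psi$ the embedding $j$ becomes $(z,w)\mapsto(z,f_z(w))$ with $f_z\colon\mathbb{D}\to\mathbb{C}$ univalent, $f_z(0)=0$, $f_z'(0)=1$; so the theorem reduces to proving that every Taylor coefficient of $f_z$ depends holomorphically on $z$, since then $\Psi$ is jointly holomorphic on a neighbourhood of the central section $j(\mathbb{D}^n\times\{0\})$, and hence everywhere, the fibrewise biholomorphism $\Phi_z$ being determined on each fibre by its $1$-jet at that section.

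Transporting hypothesis (ii) through the fibrewise bijection $\Psi$, the family $U_\varepsilon$ becomes the family of planar domains $\mathbb{C}\setminus E_\varepsilon(z)$ with $E_\varepsilon(z):=f_z(\overline{\mathbb{D}(\varepsilon)})$, and by Koebe distortion each $E_\varepsilon(z)$ is contained in a disc of $\mathbb{C}$ of radius uniformly bounded in $z$ on compacts. The key step is to upgrade the plurisubharmonic variation of the fibrewise Poincar\'e metric to plurisubharmonicity of the Robin function
$$z\longmapsto\gamma_\varepsilon(z):=-\log\operatorname{cap}(E_\varepsilon(z)).$$
For this, I would test hypothesis (ii) on the holomorphic sections $\sigma_{w_0}(z)=j(z,w_0)$ (for $\varepsilon<|w_0|<1$) equipped with the vertical vector field $j_\ast(\partial/\partial w)$, and invoke the classical large-$|\zeta|$ asymptotic
$$\log\|\partial/\partial\zeta\|_{\mathrm{Poin},\,\mathbb{C}\setminus K}(\zeta)=-\log|\zeta|-\log\log\bigl(|\zeta|/\operatorname{cap}(K)\bigr)+o(1)$$
for the Poincar\'e metric on a planar complement; hypothesis (i) is essential here because it makes the ``point at infinity'' of each fibre intrinsic. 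A limiting argument as $|w_0|\to1$ (or an exhaustion) then yields plurisubharmonicity of $\gamma_\varepsilon$ on $\mathbb{D}^n$ for every $\varepsilon>0$, which is essentially Yamaguchi's Robin-function theorem in the fibrewise setting.

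To conclude, let $\varepsilon\to0$. Writing $f_z(w)=w+\sum_{k\ge2}a_k(z)w^k$, classical potential theory supplies an asymptotic expansion $\gamma_\varepsilon(z)=-\log\varepsilon+\sum_{k\ge1}b_k(z)\,\varepsilon^k$ in which each $b_k(z)$ is a universal real-valued polynomial in the Taylor coefficients $a_j(z),\overline{a_j(z)}$ of $f_z$. Extracting $\varepsilon$-power coefficients from the plurisubharmonicity of $\gamma_\varepsilon$ (valid for all small $\varepsilon$) gives plurisubharmonicity of every $b_k$; repeating the whole construction after pre-composing $j$ with the phase rotations $w\mapsto e^{i\theta}w$ (which replaces $a_k(z)$ by $e^{i(k-1)\theta}a_k(z)$) and pairing these with the analogous psh identities for the inverse uniformisation $\Phi_z^{-1}$ produces enough complementary inequalities to force both the real and imaginary parts of each $a_k(z)$ to be pluriharmonic in a coordinated way, and ultimately to force $a_k(z)$ itself to be holomorphic. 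Then $f_z$ is holomorphic in $(z,w)$, $\Psi$ is biholomorphic, and $U\simeq\mathbb{D}^n\times\mathbb{C}$.

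The hardest step is the middle one, deriving the plurisubharmonicity of $\gamma_\varepsilon$ from hypothesis (ii): ``infinity'' is not an interior point of $U$, so one cannot place a holomorphic section there, and one must instead extract the Robin-constant asymptotics from interior sections using the parabolicity of the fibres in an essential way. This is the heart of the Nishino--Yamaguchi analysis.
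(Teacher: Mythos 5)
Your opening normalization (the fibrewise univalent map $f_z$ with $f_z(0)=0$, $f_z'(0)=1$, reducing the theorem to holomorphicity of $z\mapsto f_z$, plus Koebe distortion to control the images) coincides with the paper's. After that you and the paper diverge. The paper's device for the $\varepsilon\to 0$ limit is to renormalize: multiply the Poincar\'e metric on the fibres of $U_\varepsilon$ by $\log\varepsilon$; this converges locally uniformly, as $\varepsilon\to 0$, to the flat metric of fixed period on the $\mathbb{C}^*$-fibres of $U_0$, so psh variation passes to the limit. In the $j$-chart the flat metric is $\bigl|1+w\,\partial g/\partial w\bigr|^2\,\tfrac{i\,dw\wedge d\bar w}{4|w|^2}$ where $f(z,w)=w\,e^{g(z,w)}$, and the function $F=\log|1+w\,\partial g/\partial w|^2$ is therefore psh \emph{and} fibrewise harmonic. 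Since the Levi form of a psh function is positive semi-definite and its restriction to the fibre direction vanishes, all cross terms $\partial^2F/\partial w\,\partial\bar z_k$ vanish too; thus $\partial F/\partial w$ is holomorphic, and the normalisation $g(z,0)\equiv 0$ then forces $g$, hence $f$, to be holomorphic. No Robin constant, capacity, or asymptotics at infinity are invoked.

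Your Robin-function route has, beyond the difficulty you flag, two further concrete problems. First, the ``extraction of $\varepsilon$-power coefficients'' step is not valid as stated: if $b_1(z)+b_2(z)\varepsilon+\cdots$ is psh for all small $\varepsilon>0$, letting $\varepsilon\to 0$ gives $b_1$ psh, but $b_2$ is unconstrained unless $b_1$ is already pluriharmonic --- which is precisely what needs to be proved before one can go further. Second, testing hypothesis (ii) ``on the inverse uniformisation $\Phi_z^{-1}$'' is circular: the psh-variation hypothesis is only meaningful when tested against holomorphic sections/vertical vector fields, and whether $z\mapsto\Phi_z^{-1}(\zeta_0)$ is holomorphic is exactly the content of the theorem. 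Finally, the step you identify yourself as hardest --- deriving psh variation of $\gamma_\varepsilon$ when Koebe confines the available holomorphic sections $j(z,w_0)$ to a bounded region, far from the end at infinity whose asymptotic expansion you need --- is indeed where the argument breaks down; the paper's renormalized flat-metric limit and the Levi-form degeneracy argument are precisely the mechanism that replaces it.
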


\begin{proof}
For every $z\in{\mathbb D}^n$ we have a unique isomorphism
$$f(z,\cdot ):P^{-1}(z)\rightarrow {\mathbb C}$$
such that, using the coordinates given by $j$,
$$f(z,0)=0 \qquad {\rm and} \qquad f'(z,0)=1.$$
We want to prove that $f$ is holomorphic in $z$.

Set $R_\varepsilon (z)= f(z,P^{-1}_\varepsilon (z))\subset {\mathbb
C}$. By Koebe's Theorem, the distorsion of $f(z,\cdot )$ on compact
subsets of ${\mathbb D}$ is uniformly bounded, and so ${\mathbb
D}(\frac{1}{k}\varepsilon )\subset f(z,{\mathbb D}(\varepsilon
))\subset {\mathbb D}(k\varepsilon )$ for every $\varepsilon\in
(0,\frac{1}{2})$ and for some constant $k$, independent on $z$.
Therefore, for every $\varepsilon$ and $z$,
$${\mathbb C}\setminus {\mathbb D}(\frac{1}{k}\varepsilon ) \subset
R_\varepsilon (z) \subset {\mathbb C}\setminus {\mathbb
D}(k\varepsilon ).$$

In a similar way \cite[II]{Nis}, Koebe's Theorem gives also the
continuity of the above map $f$.

On the fibers of $P_0$, which are all isomorphic to ${\mathbb C}^*$,
we put the unique complete hermitian metric of zero curvature and
period (=length of closed simple geodesics) equal to $\sqrt{2}\pi$.
On the fibers of $P_\varepsilon$, $\varepsilon >0$, which are all
hyperbolic, we put the Poincar\'e metric multiplied by
$\log\varepsilon$, whose (constant) curvature is therefore equal to
$-\frac{1}{(\log\varepsilon )^2}$. By a simple and explicit
computation, the Poincar\'e metric on ${\mathbb C}\setminus {\mathbb
D}(c\varepsilon )$ multiplied by $\log\varepsilon$ converges
uniformly to the flat metric of period $\sqrt{2}\pi$ on ${\mathbb
C}^*$, as $\varepsilon\to 0$. Using this and the above bounds on
$R_\varepsilon (z)$, we obtain that our fiberwise metric on
$U_\varepsilon \buildrel P_\varepsilon\over\rightarrow {\mathbb
D}^n$ converges uniformly, as $\varepsilon\to 0$, to our fiberwise
metric on $U_0\buildrel P_0\over\rightarrow {\mathbb D}^n$ (see
\cite{Br4} for more explicit computations). Hence, from the
plurisubharmonic variation of the former we deduce the
plurisubharmonic variation of the latter.

Our flat metric on $P^{-1}_0(z)$ is the pull-back by $f(z,\cdot )$
of the metric $\frac{idx\wedge d\bar x}{4|x|^2}$ on $R_0(z) =
{\mathbb C}^*$. In the coordinates given by $j$, we have
$$f(z,w) = w\cdot e^{g(z,w)},$$
with $g$ holomorphic in $w$ and $g(z,0)=0$ for every $z$, by the
choice of the normalization. Hence, in these coordinates our metric
takes the form
$$\big| 1+w\frac{\partial g}{\partial w}(z,w)\big|^2 \cdot
\frac{idw\wedge d\bar w}{4|w|^2}.$$ Set $F=\log |1+w\frac{\partial
g}{\partial w}|^2$. We know, by the previous arguments, that $F$ is
plurisubharmonic. Moreover, $\frac{\partial^2 F}{\partial
w\partial\bar w}\equiv 0$, by flatness of the metric. By
semipositivity of the Levi form we then obtain $\frac{\partial^2
F}{\partial w\partial\bar z_k}\equiv 0$ for every $k$. Hence the
function $\frac{\partial F}{\partial w}$ is holomorphic, that is the
function $(\frac{\partial g}{\partial w} + w\frac{\partial^2
g}{\partial w^2})(1+w\frac{\partial g}{\partial w})^{-1}$ is
holomorphic. Taking into account that $g(z,0)\equiv 0$, we obtain
from this that $g$ also is fully holomorphic. Thus $f$ is fully
holomorphic in the chart given by $j$, and hence everywhere. It
follows that $U$ is isomorphic to a product.
\end{proof}

Remark that if $U$ is Stein then the hypothesis on the
plurisubharmonic variation is automatically satisfied, by Theorem
\ref{kizuka}, and because if $U$ is Stein then also $U_\varepsilon$
are Stein, for every $\varepsilon$. That was the situation
originally considered by Nishino and Yamaguchi.

A standard illustration of Theorem \ref{nishino} is the following
one. Take a continuous function $h:{\mathbb D}\rightarrow {\mathbb
P}$, let $\Gamma\subset{\mathbb D}\times{\mathbb P}$ be its graph,
and set $U=({\mathbb D}\times{\mathbb P})\setminus\Gamma$. Then $U$
fibers over ${\mathbb D}$ and all the fibers are isomorphic to
${\mathbb C}$. Clearly $U$ is isomorphic to a product ${\mathbb
D}\times{\mathbb C}$ if and only if $h$ is holomorphic, which in
turn is equivalent, by a classical result (due, once a time, to
Hartogs), to the Steinness of $U$.

\subsection{Foliations on Stein manifolds}

Even if we shall not need Il'yashen\-ko's results \cite{Il1}
\cite{Il2}, let us briefly explain them, as a warm-up for some basic
ideas.

Let $X$ be a Stein manifold, of dimension $n$, and let ${\mathcal
F}$ be a foliation by curves on $X$. In order to avoid some
technicalities (to which we will address later), let us assume that
${\mathcal F}$ is {\it nonsingular}, i.e. $Sing({\mathcal
F})=\emptyset$.

Take an embedded $(n-1)$-disc $T\subset X$ transverse to ${\mathcal
F}$. For every $t\in T$, let $L_t$ be the leaf of ${\mathcal F}$
through $t$, and let $\widetilde{L_t}$ be its universal covering
with basepoint $t$. Remark that, because $X$ is Stein, every
$\widetilde{L_t}$ is isomorphic either to ${\mathbb D}$ or to
${\mathbb C}$. In \cite{Il1} Il'yashenko proves that these universal
coverings $\{ \widetilde{L_t}\}_{t\in T}$ can be glued together to
get a complex manifold of dimension $n$, a sort of ``long flow
box''. More precisely, there exists a complex $n$-manifold $U_T$
with the following properties:
\begin{enumerate}
\item[(i)] $U_T$ admits a submersion $P_T:U_T\to T$ and a section
$p_T:T\to U_T$ such that, for every $t\in T$, the pointed fiber
$(P_T^{-1}(t),p_T(t))$ is identified (in a natural way) with
$(\widetilde{L_t},t)$;
\item[(ii)] $U_T$ admits an immersion (i.e., local biholomorphism)
$\Pi_T:U_T\to X$ which sends
each fiber $(\widetilde{L_t},t)$ to the corresponding leaf
$(L_t,t)$, as universal covering.
\end{enumerate}
We shall not prove here these facts, because we shall prove later
(Section 4) some closely related facts in the context of (singular)
foliations on compact K\"ahler manifolds.

\begin{theorem} \label{ilyashenko} \cite{Il1} \cite{Il2}
The manifold $U_T$ is Stein.
\end{theorem}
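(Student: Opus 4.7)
The plan is to verify Steinness via Grauert's criterion by constructing a strictly plurisubharmonic exhaustion of $U_T$. Let $\varphi : X \to \mathbb{R}$ and $\chi : T \to \mathbb{R}$ be strictly plurisubharmonic exhaustions (the former exists because $X$ is Stein, the latter because $T \cong \mathbb{D}^{n-1}$). Set $\Phi_1 := \varphi \circ \Pi_T$ and $\Phi_2 := \chi \circ P_T$. Since $\Pi_T$ is a local biholomorphism, $\Phi_1$ is strictly plurisubharmonic on $U_T$; clearly $\Phi_2$ is plurisubharmonic. A sublevel set $\{\Phi_1 + \Phi_2 \le c\}$ projects to a relatively compact subset of $T$, and within each fiber $\widetilde{L_t}$ lies inside the preimage, under the covering $\widetilde{L_t} \to L_t$, of a compact subset of the closed analytic curve $L_t \subset X$. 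The only remaining obstruction to relative compactness in $U_T$ is the possibly infinite-sheetedness of this covering.

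The heart of the proof is therefore to add a plurisubharmonic function $\Phi_3$ on $U_T$ whose restriction to each fiber $\widetilde{L_t}$ is a proper exhaustion. A natural candidate is given by the leafwise Poincar\'e (or, on parabolic fibers, flat) uniformisation, via the canonical coordinate $z_t : \widetilde{L_t} \to \mathbb{D}$ (resp.\ $\mathbb{C}$) normalised using the basepoint section $p_T$ and a trivialisation of $\mathcal F$ transverse to $T$. Concretely, one takes $\Phi_3 = -\log(1 - |z_t|^2)$ in the hyperbolic regime and $\Phi_3 = |z_t|^2$ in the parabolic regime. On each fiber this is a proper exhaustion, so $\Phi_1 + \Phi_2 + \Phi_3$ would have relatively compact sublevel sets.

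The main obstacle is to show that $\Phi_3$ is plurisubharmonic on $U_T$, i.e.\ that the leafwise uniformisation varies plurisubharmonically across $T$. I would establish this by an exhaustion argument in the spirit of Proposition~\ref{yamaguchi}. Write $X = \bigcup_c X_c$ with $X_c := \{\varphi < c\}$ strongly pseudoconvex, and let $U_T^{(c)} \subset U_T$ be the union over $t \in T$ of the connected component of $\Pi_T^{-1}(X_c) \cap \widetilde{L_t}$ containing $p_T(t)$. For a generic $c$, this is a fibration over $T$ by relatively compact simply connected domains with real-analytic pseudoconvex boundary transverse to the fibres (pseudoconvexity being inherited from that of $\partial X_c$ through the \'etale map $\Pi_T$), so Proposition~\ref{yamaguchi} applies to give plurisubharmonic variation of the fiberwise Poincar\'e metric on $U_T^{(c)}$. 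Letting $c \to \infty$, the fiberwise Poincar\'e metrics on $U_T^{(c)}$ decrease to the leafwise Poincar\'e metric on $U_T$ over the hyperbolic stratum, and plurisubharmonicity passes to the limit. The parabolic stratum is treated by Theorem~\ref{nishino}, which provides a local product structure $U \cong T \times \mathbb{C}$ there; reconciling the hyperbolic and parabolic constructions along the type-change locus, which is the delicate point analogous to the Kizuka step in Theorem~\ref{kizuka}, is where I expect the main technical work. Once $\Phi_3$ is plurisubharmonic, $\Phi_1 + \Phi_2 + \Phi_3$ is a strictly plurisubharmonic exhaustion of $U_T$ and Grauert's theorem yields that $U_T$ is Stein.
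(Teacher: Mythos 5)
Your strategy is genuinely different from the paper's, and it contains a real gap. Brunella's proof is function-theoretic: following Suzuki, he factorizes $\Pi_T$ through the holonomy tube $V_T$ sitting inside the Stein existence domain $\overline V_T$ of $\mathcal{O}_{\mathcal F}(\Omega)$, shows that $U_T$ is a holomorphically \emph{separable} Riemann domain over $\overline V_T$ (Lemma~\ref{separability}, by integrating holomorphic $1$-forms and iterated integrals along the simply connected fibers), and then proves that the Cartan--Thullen--Oka envelope $\overline U_T$ coincides with $U_T$ (Lemma~\ref{connectivity}, via an ideal-theoretic factorization of a separating function $f$). The leafwise Poincar\'e metric plays no role whatsoever in that argument.

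Your plurisubharmonic-exhaustion route mirrors the remark at the end of Section~6, which observes that $\psi = \psi_0 + \varphi\circ P_T$ (with $\psi_0$ the squared hyperbolic distance to the basepoint) is a strictly psh exhaustion when \emph{all} fibers are hyperbolic and the fiberwise Poincar\'e metric is $C^2$, but explicitly warns that ``when there are parabolic fibers such a simple argument cannot work, because $\psi$ is no more exhaustive.'' Your patch --- using $-\log(1-|z_t|^2)$ on hyperbolic fibers and $|z_t|^2$ on parabolic ones --- does not dispose of the difficulty: the normalised uniformising coordinate $z_t$ degenerates as $t$ approaches the parabolic stratum (the Poincar\'e metric tends to $0$ there, so the derivative normalisation blows up), and you give no argument that the resulting $\Phi_3$ is even upper semicontinuous across the type-change locus, let alone plurisubharmonic. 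You flag this as ``the main technical work,'' but it is precisely the point on which the whole approach stands or falls, and it is the point the paper's own remark identifies as obstructive. Note also that $X$ Stein does not preclude parabolic leaves, so this case cannot simply be set aside.

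Two further gaps. First, the plurisubharmonic variation of the fiberwise Poincar\'e metric (Theorem~\ref{kizuka}, Proposition~\ref{yamaguchi}) is the psh-ness of $\log\|v\|_{Poin}$ for a vertical vector field $v$; deducing from this the psh-ness of the hyperbolic distance to a holomorphically varying basepoint is a nontrivial computation that you do not carry out. Second, your exhausting domains $U_T^{(c)}$ need not fiber over $T$ by discs: the connected component of $\Pi_T^{-1}(X_c)\cap\widetilde{L_t}$ containing $p_T(t)$ is the basepoint component of the $\pi_1(L_t)$-covering of an open piece of the leaf, which is planar but generally of higher connectivity, so Proposition~\ref{yamaguchi} (stated for disc fibers) does not apply directly; you would need the multiply-connected/Bergman variant and then Kizuka's covering trick, i.e.\ essentially all of Theorem~\ref{kizuka} --- but that theorem assumes $U$ Stein, which is what you are trying to prove.
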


\begin{proof}
Following Suzuki \cite{Suz}, it is useful to factorize the immersion
$U_T\to X$ through another manifold $V_T$, which is constructed in a
similar way as $U_T$ except that the universal coverings
$\widetilde{L_t}$ are replaced by the holonomy coverings
$\widehat{L_t}$.

Here is Suzuki's construction. Fix a foliated chart $\Omega\subset
X$ around $T$, i.e. $\Omega\simeq {\mathbb D}^{n-1}\times{\mathbb
D}$, $T\simeq {\mathbb D}^{n-1}\times\{ 0\}$, ${\mathcal
F}\vert_\Omega =$ vertical foliation, with leaves $\{ \ast\}\times
{\mathbb D}$. Let ${\mathcal O}_{\mathcal F}(\Omega )$ be the set of
holomorphic functions on $\Omega$ which are constant on the leaves
of ${\mathcal F}\vert_\Omega$, i.e. which depend only on the first
$(n-1)$ coordinates. Let $\overline V_T$ be the {\it existence
domain} of ${\mathcal O}_{\mathcal F}(\Omega )$ over $X$: by
definition, this is the maximal holomorphically separable Riemann
domain
$$\overline V_T \rightarrow X$$
which contains $\Omega$ and such that every $f\in{\mathcal
O}_{\mathcal F}(\Omega )$ extends to some $\tilde f\in{\mathcal
O}(\overline V_T)$. The classical Cartan-Thullen-Oka theory
\cite{GuR} says that $\overline V_T$ is a Stein manifold.

The projection $\Omega\to T$ extends to a map
$$\overline Q_T : \overline V_T \rightarrow T$$
thanks to ${\mathcal O}_{\mathcal F}(\Omega )\hookrightarrow
{\mathcal O}(\overline V_T)$. Consider a fiber $\overline
Q_T^{-1}(t)$. It is not difficult to see that the connected
component of $\overline Q_T^{-1}(t)$ which cuts $\Omega$ ($\subset
\overline V_T$) is exactly the holonomy covering $\widehat{L_t}$ of
$L_t$, with basepoint $t$. The reason is the following one. Firstly,
if $\gamma :[0,1]\to L_t$ is a path contained in a leaf, with
$\gamma (0) = t$, then any function $f\in {\mathcal O}_{\mathcal
F}(\Omega )$ can be analytically prolonged along $\gamma$, by
preserving the constancy on the leaves. Secondly, if $\gamma_1$ and
$\gamma_2$ are two such paths with the same endpoint $s\in L_t$,
then the germs at $s$ obtained by the two continuations of $f$ along
$\gamma_1$ and $\gamma_2$ may be different. If the foliation has
trivial holonomy along $\gamma_1\ast\gamma_2^{-1}$, then the two
germs are certainly equal; conversely, if the holonomy is not
trivial, then we can find $f$ such that the two final germs are
different. This argument shows that $\widehat{L_t}$ is naturally
contained into $\overline Q_T^{-1}(t)$. The fact that it is a
connected component is just a ``maximality'' argument (note that
$\overline V_T$ is foliated by the pull-back of ${\mathcal F}$, and
fibers of $\overline Q_T$ are closed subvarieties invariant by this
foliation).

We denote by $V_T\subset\overline V_T$ (open subset) the union of
these holonomy coverings, and by $Q_T$ the restriction of $\overline
Q_T$ to $V_T$.

Let us return to $U_T$. We have a natural map (local biholomorphism)
$$F_T : U_T \rightarrow V_T$$
which acts as a covering between fibers (but not globally: see
Examples \ref{elliptic2} and \ref{elliptic3} below). In particular,
$U_T$ is a Riemann domain over the Stein manifold $\overline V_T$.

\begin{lemma}\label{separability}
$U_T$ is holomorphically separable.
\end{lemma}

\begin{proof}
Given $p,q\in U_T$, $p\not= q$, we want to construct $f\in {\mathcal
O}(U_T)$ such that $f(p)\not= f(q)$. The only nontrivial case ($V_T$
being holomorphically separable) is the case where $F_T(p)=F_T(q)$,
in particular $p$ and $q$ belong to the same fiber
$\widetilde{L_t}$.

We use the following procedure. Take a path $\gamma$ in
$\widetilde{L_t}$ from $p$ to $q$. It projects by $F_T$ to a closed
path $\gamma_0$ in $\widehat{L_t}$. Suppose that $[ \gamma_0 ]\not=
0$ in $H_1(\widehat{L_t}, {\mathbb R})$. Then we may find a
holomorphic 1-form $\omega\in\Omega^1(\widehat{L_t})$ such that
$\int_{\gamma_0}\omega =1$. This 1-form can be holomorphically
extended from $\widehat{L_t}$ to $V_T\subset\overline V_T$, because
$\overline V_T$ is Stein and $\widehat{L_t}$ is a closed submanifold
of it. Call $\widehat\omega$ such an extension, and
$\widetilde\omega = F_T^*(\widehat\omega )$ its lift to $U_T$. On
every (simply connected!) fiber $\widetilde{L_t}$ of $U_T$ the
1-form $\widetilde\omega$ is exact, and can be integrated giving a
holomorphic function $f_t(z)=\int_t^z
\widetilde\omega\vert_{\widetilde{L_t}}$. We thus obtain a
holomorphic function $f$ on $U_T$, which separates $p$ and $q$:
$f(p)-f(q)= \int_\gamma\widetilde\omega =
\int_{\gamma_0}\widehat\omega =1$.

This procedure does not work if $[ \gamma_0]=0$: in that case, every
$\omega\in\Omega^1(\widehat{L_t})$ has period equal to zero on
$\gamma_0$. But, in that case, we may find two 1-forms
$\omega_1,\omega_2\in\Omega^1(\widehat{L_t})$ such that the iterated
integral of $(\omega_1,\omega_2)$ along $\gamma_0$ is not zero (this
iterated integral \cite{Che} is just the integral along $\gamma$ of
$\phi_1d\phi_2$, where $\phi_j$ is a primitive of $\omega_j$ lifted
to $\widetilde{L_t}$). Then we can repeat the argument above: the
fiberwise iterated integral of
$(\widetilde\omega_1,\widetilde\omega_2)$ is a holomorphic function
on $U_T$ which separates $p$ and $q$.
\end{proof}

Having established that $U_T$ is a holomorphically separable Riemann
domain over $\overline V_T$, it is again a fundamental result of
Cartan-Thullen-Oka theory \cite{GuR} that there exists a Stein
Riemann domain
$$\overline F_T : \overline U_T \rightarrow \overline V_T$$
which contains $U_T$ and such that ${\mathcal O}(\overline U_T) =
{\mathcal O}(U_T)$. The map $P_T : U_T\to T$ extends to
$$\overline P_T : \overline U_T \to T ,$$
and $U_T$ can be identified with the
open subset of $\overline U_T$ composed by the connected components
of fibers of $\overline P_T$ which cut $\Omega\subset\overline U_T$.
But, in fact, much better is true:

\begin{lemma} \label{connectivity}
Every fiber of $\overline P_T$ is connected, that is:
$$\overline U_T = U_T .$$
\end{lemma}

\begin{proof}
If not, then, by a connectivity argument, we may find
$a_0,b_0\in\overline P_T^{-1}(t_0)$, $a_k,b_k\in\overline
P_T^{-1}(t_k)$, with $a_k\to a_0$ and $b_k\to b_0$, such that:
\begin{enumerate}
\item[(i)] $a_0\in\widetilde{L_{t_0}}$, $b_0\in\overline
P_T^{-1}(t_0)\setminus\widetilde{L_{t_0}}$;
\item[(ii)] $a_k,b_k\in\widetilde{L_{t_k}}$.
\end{enumerate}
Denote by ${\mathcal M}_{t_0}$ the maximal ideal of ${\mathcal
O}_{t_0}$ (on $T$), and for every $p\in \overline P_T^{-1}(t_0)$
denote by ${\mathcal I}_p\subset {\mathcal O}_p$ the ideal generated
by $(\overline P_T)^*({\mathcal M}_{t_0})$. At points of
$\widetilde{L_{t_0}}$, this is just the ideal of functions vanishing
along $\widetilde{L_{t_0}}$; whereas at points of $\overline
P_T^{-1}(t_0) \setminus\widetilde{L_{t_0}}$, at which $\overline
P_T$ may fail to be a submersion, this ideal may correspond to a
``higher order'' vanishing. Because $\overline U_T$ is Stein and
$\overline P_T^{-1}(t_0)$ is a closed subvariety, we may find a
holomorphic function $f\in{\mathcal O}(\overline U_T)$ such that:
\begin{enumerate}
\item[(iii)] $f\equiv 0$ on $\widetilde{L_{t_0}}$, $f\equiv 1$ on
$\overline P_T^{-1}(t_0)\setminus\widetilde{L_{t_0}}$;
\item[(iv)] for every $p\in\overline P_T^{-1}(t_0)$, the
differential $df_p$ of $f$ at $p$ belongs to the ideal ${\mathcal
I}_p\Omega^1_p$.
\end{enumerate}
Let $\{ z_1,\ldots ,z_{n-1}\}$ denote the coordinates on $T$ lifted
to $\overline U_T$. Then, by property (iv), we can factorize
$$df = \sum_{j=1}^{n-1} (z_j - z_j(t_0))\cdot\beta_j$$
where $\beta_j$ are holomorphic 1-forms on $\overline U_T$.

As in Lemma \ref{separability}, each $\beta_j$ can be integrated
along the simply connected fibers of $U_T$ (with starting point on
$T$), giving a function $g_j\in{\mathcal O}(U_T)$. This function can
be holomorphically extended to the envelope $\overline U_T$. By the
factorization above, and (ii), we have
$$f(b_k)-f(a_k) = \sum_{j=1}^{n-1} (z_j(t_k) - z_j(t_0))\cdot
(g_j(b_k) - g_j(a_k))$$ and this expression tends to 0 as $k\to
+\infty$. Therefore $f(b_0)=f(a_0)$, in contradiction with (i) and
(iii).
\end{proof}

It follows from this Lemma that $U_T=\overline U_T$ is Stein.
\end{proof}

\begin{remark}. {\rm
We do not know if $V_T$ also is Stein, i.e. if $V_T = \overline
V_T$.}
\end{remark}

This Theorem allows to apply the results of Nishino and Yamaguchi
discussed above to holomorphic foliations on Stein manifolds. For
instance: the set of parabolic leaves of such a foliation is either
full or complete pluripolar. A similar point of view is pursued in
\cite{Suz}.

\section{The unparametrized Hartogs extension lemma}

In order to construct the leafwise universal covering of a
foliation, we shall need an extension lemma of Hartogs type. This is
done in this Section.

Let $X$ be a compact K\"ahler manifold. Denote by $A_r$, $r\in
(0,1)$, the semiclosed annulus $\{ r < |w| \le 1\}$, with boundary
$\partial A_r = \{ |w| =1\}$. Given a holomorphic immersion
$$f : A_r \rightarrow X$$
we shall say that $f(A_r)$ {\bf extends to a disc} if there exists a
holomorphic map
$$g : \overline{\mathbb D} \rightarrow X ,$$
not necessarily immersive, such that $f$ factorizes as $g\circ j$
for some embedding $j: A_r\rightarrow \overline{\mathbb D}$, sending
$\partial A_r$ to $\partial{\mathbb D}$. That is, $f$ itself does
not need to extend to the full disc $\{ |w|\le 1\}$, but it extends
``after a reparametrization'', given by $j$.

Remark that if $f$ is an embedding, and $f(A_r)$ extends to a disc,
then we can find $g$ as above which is moreover injective outside a
finite subset. The image $g(\overline{\mathbb D})$ is a (possibly
singular) disc in $X$ with boundary $f(\partial A_r)$. Such an
extension $g$ or $g(\overline{\mathbb D})$ will be called {\bf
simple} extension of $f$ or $f(A_r)$. Note that such a $g$ is
uniquely defined up to a Mo\"ebius reparametrization of
$\overline{\mathbb D}$.

Given a holomorphic immersion
$$f : {\mathbb D}^k\times A_r \rightarrow X$$
we shall say that $f({\mathbb D}^k\times A_r)$ {\bf extends to a
meromorphic family of discs} if there exists a meromorphic map
$$g : W \dashrightarrow X$$
such that:
\begin{enumerate}
\item[(i)] $W$ is a complex manifold of dimension $k+1$ with
boundary, equipped with a holomorphic submersion $W\rightarrow
{\mathbb D}^k$ all of whose fibers $W_z$, $z\in{\mathbb D}^k$, are
isomorphic to $\overline{\mathbb D}$;
\item[(ii)] $f$ factorizes as $g\circ j$ for some embedding $j:
{\mathbb D}^k\times A_r \rightarrow W$, sending ${\mathbb
D}^k\times\partial A_r$ to $\partial W$ and $\{ z\}\times A_r$ into
$W_z$, for every $z\in {\mathbb D}^k$.
\end{enumerate}
In particular, the restriction of $g$ to the fiber $W_z$ gives,
after removal of indeterminacies, a disc which extends $f(z,A_r)$,
and these discs depend on $z$ in a meromorphic way. The manifold $W$
is differentiably a product of ${\mathbb D}^k$ with
$\overline{\mathbb D}$, but in general this does not hold
holomorphically. However, note that by definition $W$ is around its
boundary $\partial W$ isomorphic to a product ${\mathbb D}^k\times
A_r$.

We shall say that an immersion $f : {\mathbb D}^k\times A_r
\rightarrow X$ is an {\bf almost embedding} if there exists a proper
analytic subset $I\subset {\mathbb D}^k$ such that the restriction
of $f$ to $({\mathbb D}^k\setminus I)\times A_r$ is an embedding. In
particular, for every $z\in {\mathbb D}^k\setminus I$, $f(z,A_r)$ is
an embedded annulus in $X$, and $f(z,A_r)$, $f(z',A_r)$ are disjoint
if $z,z'\in{\mathbb D}^k\setminus I$ are different.

The following result is a sort of ``unparametrized'' Hartogs
extension lemma \cite{Siu} \cite{Iv1}, in which the extension of
maps is replaced by the extension of their images. Its proof is
inspired by \cite{Iv1} and \cite{Iv2}. The new difficulty is that we
need to construct not only a map but also the space where it is
defined. The necessity of this unparametrized Hartogs lemma for our
future constructions, instead of the usual parametrized one, has
been observed in \cite{ChI}.

\begin{theorem} \label{hartogs}
Let $X$ be a compact K\"ahler manifold and let $f:{\mathbb
D}^k\times A_r \rightarrow X$ be an almost embedding. Suppose that
there exists an open nonempty subset $\Omega\subset{\mathbb D}^k$
such that $f(z,A_r)$ extends to a disc for every $z\in\Omega$. Then
$f({\mathbb D}^k\times A_r)$ extends to a meromorphic family of
discs.
\end{theorem}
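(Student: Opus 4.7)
The plan is to reduce the statement to Ivashkovich's Hartogs extension theorem \cite{Iv1} \cite{Iv2} for meromorphic maps into compact K\"ahler manifolds. Specifically, I will assemble the original map $f$ together with a normalized family of simple extensions $g_z$ into a single holomorphic map defined on a Hartogs figure in $\mathbb{D}^k\times\overline{\mathbb{D}}$, and then let Ivashkovich's theorem do the extension work.

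The first step is to turn the simple extensions into a holomorphic family. Fix the standard inclusion $j:A_r\hookrightarrow\overline{\mathbb{D}}$. For each $z\in\Omega$, among the M\"obius-equivalent simple extensions of $f(z,A_r)$ there is a unique one $g_z:\overline{\mathbb{D}}\to X$ satisfying $g_z\circ j = f(z,\cdot)$ on $A_r$. The image $D_z = g_z(\overline{\mathbb{D}})\subset X$ varies holomorphically with $z$ in the Barlet cycle-space sense---the annular portion $f(z,A_r)$ does, and the unique extension of an embedded analytic annulus to a holomorphic disc depends holomorphically on the annulus by standard deformation theory---and the boundary normalization through $j$ then lifts this to holomorphic dependence of $g_z$ itself, yielding a holomorphic map $\Phi:\Omega\times\overline{\mathbb{D}}\to X$, $\Phi(z,u)=g_z(u)$. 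By construction $\Phi$ agrees with $f\circ(\mathrm{id}\times j^{-1})$ on the overlap $\Omega\times j(A_r)$, so gluing gives a holomorphic map
\[
\hat f\;:\;H := \bigl(\Omega\times\overline{\mathbb{D}}\bigr)\cup\bigl(\mathbb{D}^k\times j(A_r)\bigr)\longrightarrow X.
\]
After shrinking $\Omega$ to a small polydisc (possible because $\Omega$ is open and nonempty), $H$ is a classical Hartogs figure in $\mathbb{D}^k\times\overline{\mathbb{D}}$.

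Applying Ivashkovich's theorem extends $\hat f$ meromorphically to $\tilde f:\mathbb{D}^k\times\overline{\mathbb{D}}\dashrightarrow X$. The desired manifold $W$ is then obtained from $\mathbb{D}^k\times\overline{\mathbb{D}}$ by fiberwise blow-ups that resolve the indeterminacy locus of $\tilde f$; since $\hat f$ is holomorphic in a neighborhood of the boundary $\mathbb{D}^k\times\partial\mathbb{D}$, these blow-ups leave a collar of the boundary untouched, so $W$ inherits the submersion $W\to\mathbb{D}^k$ with all fibers $\cong\overline{\mathbb{D}}$, the boundary embedding $j:\mathbb{D}^k\times A_r\hookrightarrow W$, and the meromorphic map $g:W\dashrightarrow X$ required by the statement. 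The exceptional set $I$ of the almost embedding is harmless: we may assume $\Omega\cap I=\emptyset$ by shrinking, and anything pathological along $I$ is absorbed into the indeterminacy of $g$. The main obstacle is the holomorphic dependence of $g_z$ on $z$: uniqueness modulo M\"obius is immediate, but upgrading it to a genuinely holomorphic family requires deformation-theoretic (or Cauchy-type) input for analytic discs with prescribed real-analytic boundary in a compact K\"ahler target; once this is in hand the reduction to Ivashkovich's theorem is direct.
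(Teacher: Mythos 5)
Your reduction to Ivashkovich's parametrized Hartogs theorem runs into a genuine obstruction, and in fact the paper flags this obstruction explicitly: ``The new difficulty is that we need to construct not only a map but also the space where it is defined. The necessity of this unparametrized Hartogs lemma for our future constructions, instead of the usual parametrized one, has been observed in \cite{ChI}.''

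The specific gap is in your very first step. You claim that, fixing the standard inclusion $j:A_r\hookrightarrow\overline{\mathbb D}$, there is among the M\"obius-equivalent simple extensions a unique $g_z$ with $g_z\circ j=f(z,\cdot)$ on $A_r$. This is false in general. Writing $f(z,\cdot)=g_z\circ j_z$ as in the definition of a simple extension, the set $j_z(A_r)=g_z^{-1}(f(z,A_r))$ is a doubly connected domain in $\overline{\mathbb D}$ whose outer boundary is $\partial\mathbb D$ and whose inner boundary is some real analytic curve of the correct modulus, but not a round circle unless the geometry is very special. M\"obius automorphisms of $\overline{\mathbb D}$ send round circles to round circles, so no choice of M\"obius representative makes $j_z$ coincide with the fixed $j$. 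Consequently the map $\Phi(z,u)=g_z(u)$ does \emph{not} agree with $f\circ(\mathrm{id}\times j^{-1})$ on $\Omega\times j(A_r)$, and the set $H$ you call ``a classical Hartogs figure'' is not well-defined: to glue the two pieces you need a holomorphic family of embeddings $j_z:A_r\to\overline{\mathbb D}$ over the \emph{entire} $\mathbb D^k$ that restricts over $\Omega$ to the one forced by the trivialisation of the disc family, and constructing such a family is essentially the content of the theorem itself, not a preliminary normalisation. (A secondary issue: the blow-up at the end is unnecessary and would actually break the requirement that all fibres of $W\to\mathbb D^k$ be isomorphic to $\overline{\mathbb D}$; the definition of a meromorphic family of discs already tolerates indeterminacies in $g$.)

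The paper's actual argument works the other way around: it first endows the set $Z$ of extendable parameters with an analytic structure using the Harvey--Lawson moment condition and Bishop compactness, locates an open component $V\subset\mathbb D^k$ and shows that its complement is thin, builds a \emph{new} space $\mathcal W$ by gluing the tautological family of discs over $V$ to the product family of annuli over $\mathbb D^k$, then proves (Lemma \ref{koebe}) via Koebe distortion that this abstractly constructed fibration embeds fibrewise into $\mathbb D^k\times\mathbb P$, and finally applies Siu's Thullen-type extension theorem over the remaining thin set. The manifold $W$ so obtained is generally \emph{not} biholomorphic to $\mathbb D^k\times\overline{\mathbb D}$ (as the paper stresses right after the definition), which is precisely why it has to be built by hand rather than imported from a fixed polydisc. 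Your plan correctly identifies all the right ingredients (Wermer--Harvey--Lawson, Koebe, Bishop, a Thullen-type extension) but assembles them in an order that quietly presupposes the normalisation that is the heart of the difficulty.
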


\begin{proof}
Consider the subset
$$Z = \{\ z\in{\mathbb D}^k\setminus I\ \vert\ f(z,A_r)\ {\rm
extends\ to\ a\ disc}\ \} .$$ Our first aim is to give to $Z$ a
complex analytic structure with countable base . This is a rather
standard fact, see \cite{Iv2} for related ideas and \cite{CaP} for a
larger perspective.

For every $z\in Z$, fix a simple extension
$$g_z : \overline{\mathbb D} \rightarrow X$$
of $f(z,A_r)$. We firstly put on $Z$ the following metrizable
topology: we define the distance between $z_1,z_2\in Z$ as the
Hausdorff distance in $X$ between the discs
$g_{z_1}(\overline{\mathbb D})$ and $g_{z_2}(\overline{\mathbb D})$.
Note that this topology may be {\it finer} than the topology induced
by the inclusion $Z\subset{\mathbb D}^k$: if $z_1,z_2\in Z$ are
close each other in ${\mathbb D}^k$ then $g_{z_1}(\overline{\mathbb
D})$, $g_{z_2}(\overline{\mathbb D})$ are close each other near
their boundaries, but their interiors may be far each other (think
to blow-up).

Take $z\in Z$ and take a Stein neighbourhood $U\subset X$ of
$g_{z}(\overline{\mathbb D})$. Consider the subset $A\subset
{\mathbb D}^k\setminus I$ of those points $z'$ such that the circle
$f(z',\partial A_r)$ is the boundary of a compact complex curve
$C_{z'}$ contained in $U$. Note that, by the maximum principle, such
a curve is Hausdorff-close to $g_{z}(\overline{\mathbb D})$, if $z'$
is close to $z$. According to a theorem of Wermer or Harvey-Lawson
\cite[Ch.19]{AWe}, this condition is equivalent to say that
$\int_{f(z',\partial A_r)}\beta = 0$ for every holomorphic 1-form
$\beta$ on $U$ (moment condition). These integrals depend
holomorphically on $z'$, for every $\beta$. We deduce (by
noetherianity) that $A$ is an analytic subset of ${\mathbb
D}^k\setminus I$, on a neighbourhood of $z$. For every $z'\in A$,
however, the curve $C_{z'}$ is not necessarily the image of a disc:
recall that $g_{z}(\overline{\mathbb D})$ may be singular and may
have selfintersections, and so a curve close to it may have positive
genus, arising from smoothing the singularities.

Set ${\mathcal A} = \{\ (z',x)\in A\times U\ \vert\ x\in C_{z'}\
\}$. By inspection of the proof of Wermer-Harvey-Lawson theorem
\cite[Ch.19]{AWe}, we see that $\mathcal A$ is an analytic subset of
$A\times U$ (just by the holomorphic dependence on parameters of the
Cauchy transform used in that proof to construct $C_{z'}$). We have
a tautological fibration $\pi : {\mathcal A}\to A$ and a
tautological map $\tau : {\mathcal A}\to U$ defined by the two
projections. Let $B\subset A$ be the subset of those points $z'$
such that the fiber $\pi^{-1}(z') = C_{z'}$ has geometric genus
zero. This is an analytic subset of $A$ (the function $z'\mapsto\{$
geometric genus of $\pi^{-1}(z')\}$ is Zariski lower
semicontinuous). By restriction, we have a tautological fibration
$\pi :{\mathcal B}\to B$ and a tautological map $\tau : {\mathcal
B}\to U\subset X$. Each fiber of $\pi$ over $B$ is a disc, sent by
$\tau$ to a disc in $U$ with boundary $f(z',\partial A_r)$. In
particular, $B$ is contained in $Z$.

Now, a neighbourhood of $z$ in $B$ can be identified with a
neighbourhood of $z$ in $Z$ (in the $Z$-topology above): if $z'\in
Z$ is $Z$-close to $z$ then $g_{z'}(\overline{\mathbb D})$ is
contained in $U$ and then $z'\in B$. In this way, the analytic
structure of $B$ is transferred to $Z$. Note that, with this complex
analytic structure, the inclusion $Z\hookrightarrow {\mathbb D}^k$
is holomorphic. More precisely, each irreducible component of $Z$ is
a {\it locally analytic subset} of ${\mathbb D}^k\setminus I$
(where, as usual, ``locally analytic'' means ``analytic in a
neighbourhood of it''; of course, a component does not need to be
closed in ${\mathbb D}^k\setminus I$).

Let us now prove that the complex analytic space $Z$ has a {\it
countable} number of irreducible components.

To see this, we use the area function ${\bf a} : Z\to {\mathbb
R}^+$, defined by
$${\bf a}(z) =\ {\rm area\ of}\ g_z(\overline{\mathbb D}) =
\int_{\overline{\mathbb D}} g_z^*(\omega )$$ ($\omega =$ K\"ahler
form of $X$). This function is continuous on $Z$. Let $c >0$ be the
minimal area of rational curves in $X$. Set, for every $m\in{\mathbb
N}$,
$$Z_m = \{\ z\in Z\ \vert\ {\bf a}(z)\in \big( m\frac{c}{2},
(m+2)\frac{c}{2}\big) \ \} ,$$ so that $Z$ is covered by
$\cup_{m=0}^{+\infty} Z_m$. Each $Z_m$ is open in $Z$, and we claim
that on each $Z_m$ the $Z$-topology coincides with the ${\mathbb
D}^k$-topology. Indeed, take a sequence $\{ z_n\}\subset Z_m$ which
${\mathbb D}^k$-converges to $z_\infty\in Z_m$. We thus have, in
$X$, a sequence of discs $g_{z_n}(\overline{\mathbb D})$ with
boundaries $f(z_n,\partial A_r)$ and areas in the interval
$(m\frac{c}{2}, (m+2)\frac{c}{2})$. By Bishop's compactness theorem
\cite{Bis} \cite[Prop.3.1]{Iv1}, up to subsequencing,
$g_{z_n}(\overline{\mathbb D})$ converges, in the Hausdorff
topology, to a compact complex curve of the form $D\cup Rat$, where
$D$ is a disc with boundary $f(z_\infty ,\partial A_r)$ and $Rat$ is
a finite union of rational curves (the bubbles). Necessarily,
$D=g_{z_\infty}(\overline{\mathbb D})$. Moreover,
$$\lim_{n\to +\infty} {\rm area}(g_{z_n}(\overline{\mathbb D})) =
{\rm area}(g_{z_\infty}(\overline{\mathbb D})) + {\rm area}(Rat).$$
From ${\bf a}(z_\infty ),{\bf a}(z_n)\in (m\frac{c}{2},
(m+2)\frac{c}{2})$ it follows that ${\rm area}(Rat) <c$, hence, by
definition of $c$, $Rat =\emptyset$. Hence
$g_{z_n}(\overline{\mathbb D})$ converges, in the Hausdorff
topology, to $g_{z_\infty}(\overline{\mathbb D})$, i.e. $z_n$
converges to $z_\infty$ in the $Z$-topology.

Therefore, if $L_m\subset Z_m$ is a countable ${\mathbb D}^k$-dense
subset then $L_m$ is also $Z$-dense in $Z_m$, and
$\cup_{m=0}^{+\infty}L_m$ is countable and $Z$-dense in $Z$. It
follows that $Z$ has countably many irreducible components.

After these preliminaries, we can really start the proof of the
theorem.

The hypotheses imply that the space $Z$ has (at least) one
irreducible component $V$ which is open in ${\mathbb D}^k\setminus
I$. Let us consider again the area function ${\bf a}$ on $V$. The
following lemma is classical, and it is at the base of every
extension theorem for maps into K\"ahler manifolds \cite{Siu}
\cite{Iv1}.

\begin{lemma}\label{stokes}
For every compact $K\subset{\mathbb D}^k$, the function ${\bf a}$ is
bounded on $V\cap K$.
\end{lemma}

\begin{proof}
If $z_0,z_1\in V$, then we can join them by a continuous path $\{
z_t\}_{t\in [0,1]}\subset V$, so that we have in $X$ a continuous
family of discs $g_{z_t}(\overline{\mathbb D})$, with boundaries
$f(z_t,\partial A_r)$. By Stokes formula, the difference between the
area of $g_{z_1}(\overline{\mathbb D})$ and
$g_{z_0}(\overline{\mathbb D})$ is equal to the integral of the
K\"ahler form $\omega$ on the ```tube'' $\cup_{t\in [0,1]}
g_{z_t}(\partial{\mathbb D})=$ $f(\cup_{t\in [0,1]}\{ z_t\}\times
\partial A_r)$. Now, for topological reasons, $f^*(\omega )$ admits
a primitive $\lambda$ on ${\mathbb D}^k\times A_r$. Therefore
$${\bf a}(z_1) - {\bf a}(z_0) = \int_{\{ z_1\}\times\partial A_r}
\lambda - \int_{\{ z_0\}\times\partial A_r} \lambda .$$ Remark that
the function $z\mapsto \int_{\{ z\}\times\partial A_r} \lambda$ is
defined (and smooth) on the full ${\mathbb D}^k$, not only on $V$,
and so it is bounded on every compact $K\subset {\mathbb D}^k$. The
conclusion follows immediately.
\end{proof}

We use this lemma to study the boundary of $V$, and to show that the
complement of $V$ is small.

Take a point $z_\infty\in ({\mathbb D}^k\setminus I)\cap\partial V$
and a sequence $z_n\in V$ converging to $z_\infty$. By the
boundedness of ${\bf a}(z_n)$ and Bishop compactness theorem, we
obtain a disc in $X$ with boundary $f(z_\infty ,\partial A_r)$
(plus, perhaps, some rational bubbles, but we may forget them). In
particular, the point $z_\infty$ belongs to $Z$. Obviously, the
irreducible component of $Z$ which contains $z_\infty$ is not open
in ${\mathbb D}^k\setminus I$, because $z_\infty\in\partial V$, and
so that component is a locally analytic subset of ${\mathbb
D}^k\setminus I$ of {\it positive} codimension. It follows that the
boundary $\partial V$ is a {\it thin subset} of ${\mathbb D}^k$,
i.e. it is contained in a countable union of locally analytic
subsets of positive codimension (certain components of $Z$, plus the
analytic subset $I$). Disconnectedness properties of thin subsets
show that also the complement ${\mathbb D}^k\setminus V$ ($=\partial
V$) {\it is thin in} ${\mathbb D}^k$.

Recall now that over $V$ we have the (normalized) tautological
fibration $\pi :{\mathcal V}\to V$, equipped with the tautological
map $\tau : {\mathcal V}\to X$. Basically, this provides the desired
extension of $f$ over the large open subset $V$. As in \cite{Iv1},
we shall get the extension over the full ${\mathbb D}^k$ by reducing
to the Thullen type theorem of Siu \cite{Siu}.

By construction, $\partial{\mathcal V}$ has a neigbourhood
isomorphic to $V\times A_r$, the isomorphism being realized by $f$.
Hence we can glue to ${\mathcal V}$ the space ${\mathbb D}^k\times
A_r$, using the same $f$. We obtain a new space ${\mathcal W}$
equipped with a fibration $\pi : {\mathcal W}\to {\mathbb D}^k$ and
a map $\tau : {\mathcal W}\to X$ such that:
\begin{enumerate}
\item[(i)] $\pi^{-1}(z)\simeq \overline{\mathbb D}$ for $z\in V$,
$\pi^{-1}(z)\simeq A_r$ for $z\in {\mathbb D}^k\setminus V$;
\item[(ii)] $f$ factorizes through $\tau$.
\end{enumerate}
In other words, and recalling how ${\mathcal V}$ was defined, up to
normalization ${\mathcal W}$ is simply the analytic subset of
${\mathbb D}^k\times X$ given by the union of all the discs $\{ z\}
\times g_z(\overline{\mathbb D})$, $z\in V$, and all the annuli $\{
z\} \times f(z,A_r)$, $z\in{\mathbb D}^k\setminus V$.

\begin{lemma}\label{koebe}
There exists an embedding ${\mathcal W}\rightarrow {\mathbb
D}^k\times {\mathbb P}$, which respects the fibrations over
${\mathbb D}^k$.
\end{lemma}

\begin{proof}
Set $B_r = \{\ w\in {\mathbb P}\ \vert\ |w| > r\ \}$. By
construction, $\partial{\mathcal W}$ has a neighbourhood isomorphic
to ${\mathbb D}^k\times A_r$. We can glue ${\mathbb D}^k\times B_r$
to ${\mathcal W}$ by identification of that neighbourhood with
${\mathbb D}^k\times A_r \subset {\mathbb D}^k\times B_r$, i.e. by
prolonging each annulus $A_r$ to a disc $B_r$. The result is a new
space $\widehat{\mathcal W}$ with a fibration $\widehat\pi :
\widehat{\mathcal W}\to{\mathbb D}^k$ such that:
\begin{enumerate}
\item[(i)] $\widehat\pi ^{-1}(z)\simeq {\mathbb P}$ for every $z\in V$;
\item[(ii)] $\widehat\pi^{-1}(z)\simeq B_r$ for every $z\in
{\mathbb D}^k\setminus V$.
\end{enumerate}
We shall prove that $\widehat{\mathcal W}$ (and hence ${\mathcal
W}$) embeds into ${\mathbb D}^k\times{\mathbb P}$ (incidentally,
note the common features with the proof of Theorem \ref{nishino}).

For every $z\in V$, there exists a unique isomorphism
$$\varphi_z : \widehat\pi^{-1}(z)\rightarrow {\mathbb P}$$
such that
$$\varphi_z(\infty )=0\ ,\quad \varphi_z'(\infty )=1\ ,\quad
\varphi_z(r)=\infty$$ where $\infty ,r\in\overline{B_r}\subset
\widehat\pi^{-1}(z)$ and the derivative at $\infty$ is computed
using the coordinate $\frac{1}{w}$. Every ${\mathbb P}$-fibration is
locally trivial, and so this isomorphism $\varphi_z$ depends
holomorphically on $z$. Thus we obtain a biholomorphism
$$\Phi : \widehat\pi^{-1}(V)\rightarrow V\times {\mathbb P}$$
and we want to prove that $\Phi$ extends to the full
$\widehat{\mathcal W}$.

By Koebe's Theorem, the distorsion of $\varphi_z$ on any compact
$K\subset B_r$ is uniformly bounded (note that
$\varphi_z(B_r)\subset {\mathbb C}$). Hence, for every $w_0\in B_r$
the holomorphic function $z\mapsto \varphi_z(w_0)$ is bounded on
$V$. Because the complement of $V$ in ${\mathbb D}^k$ is thin, by
Riemann's extension theorem this function extends holomorphically to
${\mathbb D}^k$. This permits to extends the above $\Phi$ also to
fibers over ${\mathbb D}^k\setminus V$. Still by bounded distorsion,
this extension is an embedding of $\widehat{\mathcal W}$ into
${\mathbb D}^k\times{\mathbb P}$.
\end{proof}

Now we can finish the proof of the theorem. Thanks to the previous
embedding, we may ``fill in'' the holes of ${\mathcal W}$ and obtain
a $\overline{\mathbb D}$-fibration $W$ over ${\mathbb D}^k$. Then,
by the Thullen type theorem of Siu \cite{Siu} (and transfinite
induction) the map $\tau : {\mathcal W}\to X$ can be meromorphically
extended to $W$. This is the meromorphic family of discs which
extends $f({\mathbb D}^k\times A_r)$.
\end{proof}

By comparison with the usual ``parametrized'' Hartogs extension
lemma \cite{Iv1}, one could ask if the almost embedding hypothesis
in Theorem \ref{hartogs} is really indispensable. In some sense, the
answer is yes. Indeed, we may easily construct a fibered immersion
$f : {\mathbb D}\times A_r \to {\mathbb D}\times{\mathbb P} \subset
{\mathbb P}\times{\mathbb P}$, $f(z,w)=(z,f_0(z,w))$, such that: (i)
for some $z_0\in{\mathbb D}$, $f_0(z_0,\partial A_r)$ is an embedded
circle in ${\mathbb P}$; (ii) for some other $z_1\in{\mathbb D}$,
$f_0(z_1,\partial A_r)$ is an immersed but not embedded circle in
${\mathbb P}$. Then, for some neighbourhhod $U\subset{\mathbb D}$ of
$z_0$, $f(U\times A_r)$ can be obviously extended to a meromorphic
(even holomorphic) family of discs, but such a $U$ cannot be
enlarged to contain $z_1$, because $f_0(z_1,\partial A_r)$ bounds no
disc in ${\mathbb P}$. Note, however, that $f_0(z_1,\partial A_r)$
bounds a so called holomorphic chain in ${\mathbb P}$
\cite[Ch.19]{AWe}: if $\Omega_1,...,\Omega_m$ are the connected
components of ${\mathbb P}\setminus f_0(z_1,\partial A_r)$, then
$f_0(z_1,\partial A_r)$ is the ``algebraic'' boundary of
$\sum_{j=1}^m n_j\Omega_j$, for suitable integers $n_j$. It is
conceivable that Theorem \ref{hartogs} holds under the sole
assumption that $f$ is an immersion, provided that the manifold $W$
is replaced by a (suitably defined) ``meromorphic family of
1-dimensional chains''.

\section{Holonomy tubes and covering tubes}

Here we shall define leaves, holonomy tubes and covering tubes,
following \cite{Br3}.

Let $X$ be a compact K\"ahler manifold, of dimension $n$, and let
${\mathcal F}$ be a foliation by curves on $X$. Set $X^0 =
X\setminus Sing({\mathcal F})$ and ${\mathcal F}^0 = {\mathcal
F}\vert_{X^0}$. We could define the ``leaves'' of the singular
foliation ${\mathcal F}$ simply as the usual leaves of the
nonsingular foliation ${\mathcal F}^0$. However, for our purposes we
shall need that the universal coverings of the leaves glue together
in a nice way, producing what we shall call covering tubes. This
shall require a sort of semicontinuity of the fundamental groups of
the leaves. With the na\"\i ve definition ``leaves of ${\mathcal F}$
= leaves of ${\mathcal F}^0$'', such a semicontinuity can fail, in
the sense that a leaf (of ${\mathcal F}^0$) can have a larger
fundamental group than nearby leaves (of ${\mathcal F}^0$). To
remedy to this, we give now a less na\"\i ve definition of leaf of
${\mathcal F}$, which has the effect of killing certain homotopy
classes of cycles, and the problem will be settled almost by
definition (but we will require also the unparametrized Hartogs
extension lemma of the previous Section).

\subsection{Vanishing ends}

Take a point $p\in X^0$, and let $L_p^0$ be the leaf of ${\mathcal
F}^0$ through $p$. It is a smooth complex connected curve, equipped
with an injective immersion
$$i_p^0 : L_p^0 \rightarrow X^0 ,$$
and sometimes we will tacitly identify $L_p^0$ with its image in
$X^0$ or $X$. Recall that, given a local transversal ${\mathbb
D}^{n-1}\hookrightarrow X^0$ to ${\mathcal F}^0$ at $p$, we have a
holonomy representation \cite{CLN}
$$hol_p : \pi_1(L_p^0,p)\to Diff ({\mathbb D}^{n-1},0)$$
of the fundamental group of $L_p^0$ with basepoint $p$ into the
group of germs of holomorphic diffeomorphisms of $({\mathbb
D}^{n-1},0)$.

Let $E\subset L_p^0$ be a {\it parabolic end} of $L_p^0$, that is a
closed subset isomorphic to the punctured closed disc
$\overline{\mathbb D}^* = \{\ 0 < |w| \le 1\ \}$, and suppose that
the holonomy of ${\mathcal F}^0$ along the cycle $\partial E$ is
trivial. Then, for some $r\in (0,1)$, the inclusion
$A_r\subset\overline{\mathbb D}^* = E$ can be extended to an
embedding ${\mathbb D}^{n-1}\times A_r \to X^0$ which sends each $\{
z\}\times A_r$ into a leaf of ${\mathcal F}^0$, and $\{ 0\}\times
A_r$ to $A_r\subset E$ (this is because $A_r$ is Stein, see for
instance \cite[\S 3]{Suz}). More generally, if the holonomy of
${\mathcal F}^0$ along $\partial E$ is finite, of order $k$, then we
can find an immersion ${\mathbb D}^{n-1}\times A_r\to X^0$ which
sends each $\{ z\}\times A_r$ into a leaf of ${\mathcal F}^0$ and
$\{ 0\}\times A_r$ to $A_{r'}\subset E$, in such a way that $\{
0\}\times A_r \to A_{r'}$ is a regular covering of order $k$. Such
an immersion is (or can be chosen as) an almost embedding: the
exceptional subset $I\subset {\mathbb D}^{n-1}$, outside of which
the map is an embedding, corresponds to leaves which intersect the
transversal, over which the holonomy is computed, at points whose
holonomy orbit has cardinality strictly less than $k$. This is an
analytic subset of the transversal. Such an almost embedding will be
called {\it adapted to} $E$.

We shall use the following terminology: a meromorphic map is a {\it
meromorphic immersion} if it is an immersion outside its
indeterminacy set.

\begin{definition}\label{vanishing}
Let $E\subset L_p^0$ be a parabolic end with finite holonomy, of
order $k\ge 1$. Then $E$ is a {\bf vanishing end}, of order $k$, if
there exists an almost embedding $f: {\mathbb D}^{n-1}\times A_r \to
X^0\subset X$ adapted to $E$ such that:
\begin{enumerate}
\item[(i)] $f({\mathbb D}^{n-1}\times A_r)$ extends to a meromorphic
family of discs $g:W\dashrightarrow X$;
\item[(ii)] $g$ is a meromorphic immersion.
\end{enumerate}
\end{definition}

In other words, $E$ is a vanishing end if, firstly, it can be
compactified in $X$ to a disc, by adding a singular point of
${\mathcal F}$, and, secondly, this disc-compactification can be
meromorphically and immersively deformed to nearby leaves, up to a
ramification given by the holonomy. This definition mimics, in our
context, the classical definition of vanishing cycle for real
codimension one foliations \cite{CLN}.

\begin{remark} {\rm If $g:W\dashrightarrow X$ is as in Definition
\ref{vanishing}, then the indeterminacy set $F=Indet(g)$ cuts each
fiber $W_z$, $z\in{\mathbb D}^{n-1}$, along a finite subset
$F_z\subset W_z$. The restricted map $g_z : W_z\to X$ sends
$W_z\setminus F_z$ into a leaf of ${\mathcal F}^0$, in an immersive
way, and $F_z$ into $Sing({\mathcal F})$. Each point of $F_z$
corresponds to a parabolic end of $W_z\setminus F_z$, which is sent
by $g_z$ to a parabolic end of a leaf; clearly, this parabolic end
is a vanishing one (whose order, however, may be smaller than $k$),
and the corresponding meromorphic family of discs is obtained by
restricting $g$. Remark also that, $F$ being of codimension at least
2, we have $F_z=\emptyset$ for every $z$ outside an analytic subset
of ${\mathbb D}^{n-1}$ of positive codimension. This means (as we
shall see better below) that ``most'' leaves have no vanishing end.}
\end{remark}

If $E\subset L_p^0$ is a vanishing end of order $k$, then we
compactify it by adding one point, i.e. by prolonging
$\overline{\mathbb D}^*$ to $\overline{\mathbb D}$. But we do such a
compactification in an {\it orbifold sense}: the added point has, by
definition, a {\it multiplicity} equal to $k$. By doing such a
end-compactification for every vanishing end of $L_p^0$, we finally
obtain a connected curve (with orbifold structure) $L_p$, which is
by definition the {\bf leaf} of ${\mathcal F}$ through $p$. The
initial inclusion $i_p^0 : L_p^0\to X^0$ can be extended to a
holomorphic map
$$i_p : L_p \rightarrow X$$
which sends the discrete subset $L_p\setminus L_p^0$ into
$Sing({\mathcal F})$. Note that $i_p$ may fail to be immersive at
those points. Moreover, it may happen that two different points of
$L_p\setminus L_p^0$ are sent by $i_p$ to the same singular point of
${\mathcal F}$ (see Example \ref{flop} below). In spite of this, we
shall sometimes identify $L_p$ with its image in $X$. For instance,
to say that a map $f:Z\to X$ ``has values into $L_p$'' shall mean
that $f$ factorizes through $i_p$.

Remark that we have not defined, and shall not define, leaves $L_p$
through $p\in Sing({\mathcal F})$: a leaf may pass through
$Sing({\mathcal F})$, but its basepoint must be chosen outside
$Sing({\mathcal F})$.

Let us see two examples.

\begin{example} \label{elliptic} {\rm
Take a compact K\"ahler surface $S$ foliated by an elliptic
fibration $\pi : S\to C$, and let $c_0\in C$ be such that the fiber
$F_0 = \pi^{-1}(c_0)$ is of Kodaira type $II$ \cite[V.7]{BPV}, i.e.
a rational curve with a cusp $q$. If $p\in F_0$, $p\not= q$, then
the leaf $L_p^0$ is equal to $F_0\setminus\{ q\}\simeq{\mathbb C}$.
This leaf has a parabolic end with trivial holonomy, which is {\it
not} a vanishing end. Indeed, this end can be compactified to a
cuspidal disc, which however cannot be meromorphically deformed {\it
as a disc} to nearby leaves, because nearby leaves have positive
genus close to $q$. Hence $L_p = L_p^0$.

Let now $\widetilde S\to S$ be the composition of three blow-ups
which transforms $F_0$ into a tree of four smooth rational curves
$\widetilde F_0 = G_1+G_2+G_3+G_6$ of respective selfintersections
$-1, -2, -3, -6$ \cite[V.10]{BPV}. Let $\widetilde\pi : \widetilde
S\to C$ be the new elliptic fibration/foliation. Set $p_j=G_1\cap
G_j$, $j=2,3,6$. If $p\in G_1$ is different from those three points,
then $L_p^0 = G_1\setminus\{ p_2,p_3,p_6\}$. The parabolic end of
$L_p^0$ corresponding to $p_2$ (resp. $p_3$, $p_6$) has holonomy of
order 2 (resp. 3, 6). This time, this is a vanishing end: a disc $D$
in $G_1$ through $p_2$ (resp. $p_3$, $p_6$) ramified at order 2
(resp. 3, 6) can be deformed to nearby leaves as discs close to
$2D+G_2$ (resp. $3D+G_3$, $6D+G_6$), and also the ``meromorphic
immersion'' condition can be easily respected. Thus $L_p$ is
isomorphic to the orbifold ``${\mathbb P}$ with three points of
multiplicity 2, 3, 6''. Note that the universal covering (in
orbifold sense) of $L_p$ is isomorphic to ${\mathbb C}$, and the
holonomy covering (defined below) is a smooth elliptic curve.

Finally, if $p\in G_j$, $p\not= p_j$, $j=2,3,6$, then $L_p^0$ has a
parabolic end with trivial holonomy, which is not a vanishing end,
and so $L_p=L_p^0\simeq{\mathbb C}$.

A more systematic analysis of the surface case, from a slightly
different point of view, can be found in \cite{Br1}.}
\end{example}

\begin{example} \label{flop} {\rm
Take a projective threefold $M$ containing a smooth rational curve
$C$ with normal bundle $N_C = {\mathcal O}(-1)\oplus {\mathcal
O}(-1)$. Take a foliation ${\mathcal F}$ on $M$, nonsingular around
$C$, such that: (i) for every $p\in C$, $T_p{\mathcal F}$ is
different from $T_pC$; (ii) $T_{\mathcal F}$ has degree -1 on $C$.
It is easy to see that there are a lot of foliations on $M$
satisfying these two requirements. Note that, on a neighbourhood of
$C$, we can glue together the local leaves (discs) of ${\mathcal F}$
through $C$, and obtain a smooth surface $S$ containing $C$;
condition (ii) means that the selfintersection of $C$ in $S$ is
equal to -1.

\begin{figure}[h]
\includegraphics[width=10cm,height=5cm]{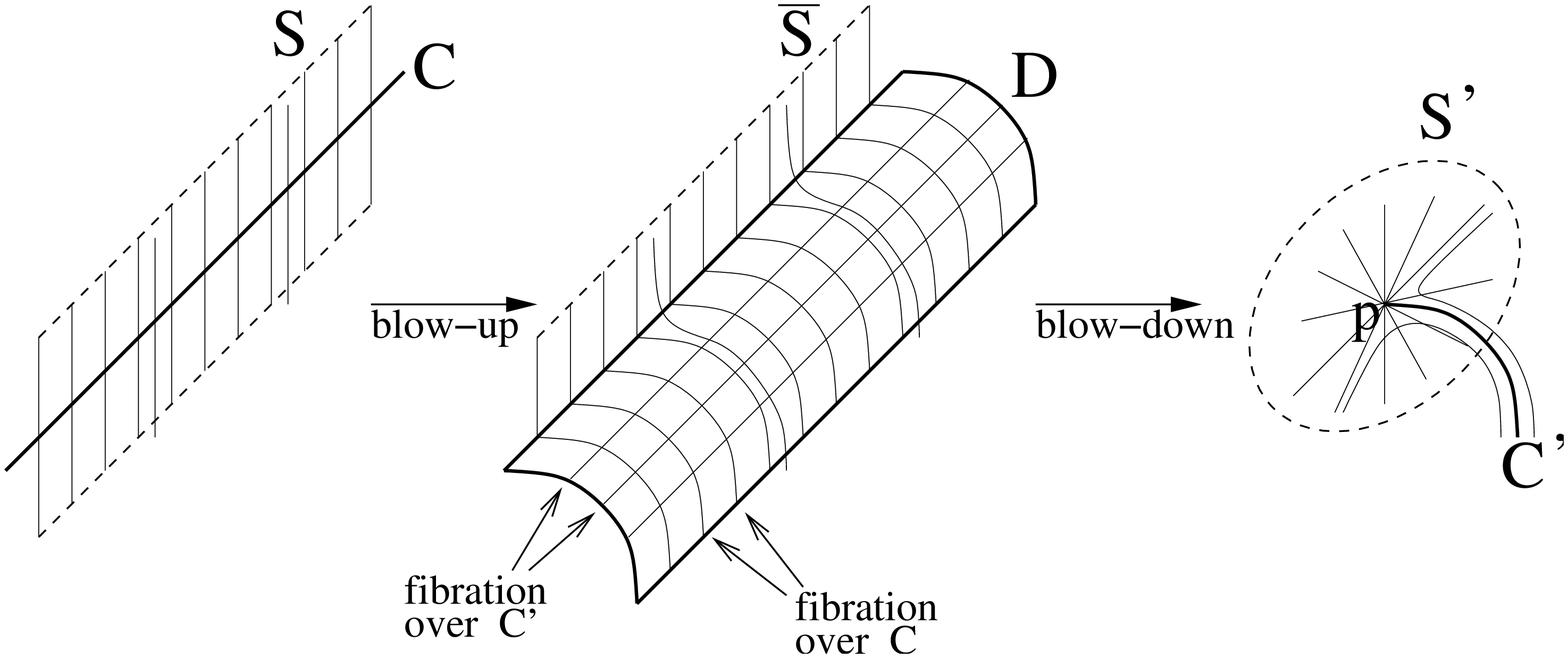}
\end{figure}

We now perform a {\it flop} of $M$ along $C$. That is, we firstly
blow-up $M$ along $C$, obtaining a threefold $\widetilde M$
containing an exceptional divisor $D$ naturally ${\mathbb
P}$-fibered over $C$. Because $N_C = {\mathcal O}(-1)\oplus
{\mathcal O}(-1)$, this divisor $D$ is in fact isomorphic to
${\mathbb P}\times{\mathbb P}$, hence it admits a second ${\mathbb
P}$-fibration, transverse to the first one. Each fibre of this
second fibration can be blow-down to a point (Moishezon's criterion
\cite{Moi}), and the result is a smooth threefold $M'$, containing a
smooth rational curve $C'$ with normal bundle $N_{C'}={\mathcal
O}(-1)\oplus {\mathcal O}(-1)$, over which $D$ fibers. (At this
point, $M'$ could be no more projective, nor K\"ahler, but this is
not an important fact in this example). The strict transform
$\overline S$ of $S$ in $\widetilde M$ cuts the divisor $D$ along
one of the fibers of the second fibration $D\to C'$, by condition
(ii) above, therefore its image $S'$ in $M'$ is a bidimensional disc
which cuts $C'$ transversely at some point $p$.

Let us look at the transformed foliation ${\mathcal F}'$ on $M'$.
The point $p$ is a singular point of ${\mathcal F}'$, the only one
on a neighbourhood of $C'$. The curve $C'$ is invariant by
${\mathcal F}'$. The surface $S'$ is tangent to ${\mathcal F}'$, and
over it the foliation has a radial type singularity. In fact, in
appropriate coordinates around $p$ the foliation is generated by the
vector field $x\frac{\partial}{\partial x} +
y\frac{\partial}{\partial y} - z\frac{\partial}{\partial z}$, with
$S'=\{ z=0\}$ and $C'=\{ x=y=0\}$.

If $L^0$ is a leaf of $({\mathcal F}')^0$, then each component $D^0$
of $L^0\cap S'$ is a parabolic end converging to $p$. It is a
vanishing end, of order 1: the meromorphic family of discs of
Definition \ref{vanishing} is obviously constructed from a flow box
of ${\mathcal F}$, around a suitably chosen point of $C$. Generic
fibers of this family are sent to discs in $M'$ close to $D^0\cup
C'$; other fibers are sent to discs in $S'$ passing through $p$, and
close to $D^0\cup\{ p\}$. Remark that it can happen that $L^0\cap
S'$ has several, or even infinitely many, components; in that case
the map $i:L\to M'$ sends several, or even infinitely many, points
to the same $p\in M'$.}
\end{example}

Having defined the leaf $L_p$ through $p\in X^0$, we can now define
its {\bf holonomy covering} $\widehat{L_p}$ and its {\bf universal
covering} $\widetilde{L_p}$. The first one is the covering defined
by the Kernel of the holonomy. More precisely, we start with the
usual holonomy covering $(\widehat{L_p^0},p)\to (L_p^0,p)$ with
basepoint $p$ (it is useful to think to $\widehat{L_p^0}$ as
equivalence classes of paths in $L_p^0$ starting at $p$, so that the
basepoint $p\in\widehat{L_p^0}$ is the class of the constant path).
If $E\subset L_p^0$ is a vanishing end of order $k$, then its
preimage in $\widehat{L_p^0}$ is a (finite or infinite) collection
of parabolic ends $\{ \widehat{E_j}\}$, each one regularly covering
$E$ with degree $k$. Each such map $\overline{\mathbb D}^*\simeq
\widehat{E_j} \to \overline{\mathbb D}^*\simeq E$ can be extended to
a map $\overline{\mathbb D} \to \overline{\mathbb D}$, with a
ramification at 0 of order $k$. By definition, $\widehat{L_p}$ is
obtained by compactifying all these parabolic ends of
$\widehat{L_p^0}$, over all the vanishing ends of $L_p^0$.
Therefore, we have a covering map
$$(\widehat{L_p},p) \rightarrow (L_p,p)$$
which ramifies over $L_p\setminus L_p^0$. However, from the orbifold
point of view such a map is a {\it regular} covering: $w=z^k$ is a
regular covering if $z=0$ has multiplicity 1 and $w=0$ has
multiplicity $k$. Note that we do not need anymore a orbifold
structure on $\widehat{L_p}$, in the sense that all its points have
multiplicity 1.

In a more algebraic way, the orbifold fundamental group
$\pi_1(L_p,p)$ is a quotient of $\pi_1(L_p^0,p)$, through which the
holonomy representation $hol_p$ factorizes. Then $\widehat{L_p}$ is
the covering defined by the Kernel of this representation of
$\pi_1(L_p,p)$ into $Diff({\mathbb D}^{n-1},0)$.

The universal covering $\widetilde{L_p}$ can be now defined as the
universal covering of $\widehat{L_p}$, or equivalently as the
universal covering, in orbifold sense, of $L_p$. We then have
natural covering maps
$$(\widetilde{L_p},p) \rightarrow (\widehat{L_p},p)
\rightarrow (L_p,p).$$ Recall that there are few exceptional
orbifolds (teardrops) which do not admit a universal covering. It is
a pleasant fact that in our context such orbifolds do not appear.

\subsection{Holonomy tubes}

We now analyze how the maps $p\mapsto \widehat{L_p}$ and then
$p\mapsto \widetilde{L_p}$ depend on $p$.

Let $T\subset X^0$ be a local transversal to ${\mathcal F}^0$.

\begin{proposition} \label{holonomytube}
There exists a complex manifold $V_T$ of dimension $n$, a
holomorphic submersion
$$Q_T : V_T \rightarrow T,$$
a holomorphic section
$$q_T : T \rightarrow V_T,$$
and a meromorphic immersion
$$\pi_T : V_T \dashrightarrow X$$
such that:
\begin{enumerate}
\item[(i)] for every $t\in T$, the pointed fiber $(Q_T^{-1}(t),
q_T(t))$ is isomorphic to $(\widehat{L_t},t)$;
\item[(ii)] the indeterminacy set $Indet(\pi_T)$ of $\pi_T$ cuts
each fiber $Q_T^{-1}(t)=\widehat{L_t}$ along the discrete subset
$\widehat{L_t}\setminus \widehat{L_t^0}$;
\item[(iii)] for every $t\in T$, the restriction of $\pi_T$ to
$Q_T^{-1}(t) = \widehat{L_t}$ coincides, after removal of
indeterminacies, with the holonomy covering $\widehat{L_t}\to L_t
\buildrel i_t\over\to i_t(L_t)\subset X$.
\end{enumerate}
\end{proposition}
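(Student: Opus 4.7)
The plan is to first construct the ``non-singular tube'' $V_T^0$ of the restricted foliation $\mathcal{F}^0$ on $X^0$, then enlarge $V_T^0$ by capping off every parabolic end of every fiber that sits over a vanishing end of the ambient leaf. The first step is classical; the second is local in $T$, with globalisation carried out by the unparametrised Hartogs extension lemma (Theorem \ref{hartogs}).

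First I would construct $V_T^0$ in the spirit of Suzuki's construction recalled in the proof of Theorem \ref{ilyashenko}. Namely, $V_T^0$ is the set of equivalence classes $[\gamma]$ of piecewise smooth paths $\gamma:[0,1]\to X^0$ tangent to $\mathcal{F}^0$ with $\gamma(0)\in T$, two paths being equivalent when they share the same endpoint and induce the same germ of holonomy on a local transversal through that endpoint. Flow-box charts endow $V_T^0$ with a canonical structure of complex $n$-manifold. The assignments $[\gamma]\mapsto\gamma(0)$ and $[\gamma]\mapsto\gamma(1)$ define a holomorphic submersion $Q_T^0:V_T^0\to T$ and a holomorphic immersion $\pi_T^0:V_T^0\to X^0$, while $t\mapsto[\text{constant path at }t]$ gives a holomorphic section $q_T^0:T\to V_T^0$; the fiber of $Q_T^0$ over $t$ is canonically identified with the holonomy cover $\widehat{L_t^0}$ with basepoint $t$.

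Next I would enlarge $V_T^0$ by capping every parabolic end of every fiber that lies over a vanishing end of $L_t$. Fix a vanishing end $E\subset L_{t_0}^0$ of order $k$: by Definition \ref{vanishing} there exist a polydisc neighbourhood $U\subset T$ of $t_0$, an almost-embedding $f:U\times A_r\to X^0$ adapted to $E$, and a meromorphic immersion $g:W\dashrightarrow X$ extending $f$. For $z\in U$ generic, the preimage of $f(\{z\}\times A_r)$ in the fiber $\widehat{L_z^0}$ is a disjoint union of annuli, each mapping to $f(\{z\}\times A_r)$ as a $k$-fold cover; equivalently, $f$ lifts through $\pi_T^0$ to a finite disjoint collection of almost-embeddings $\widetilde f_\alpha:U\times A_r\to V_T^0$. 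For each such lift I would glue a copy of $W$ (pulled back by the appropriate $k$-fold ramification $w\mapsto w^k$) to $V_T^0$ along $\widetilde f_\alpha$, thereby replacing the associated family of parabolic ends by a meromorphic family of discs, and extend $\pi_T^0$ by $g$ on the new discs. The meromorphic immersion condition of Definition \ref{vanishing} ensures that this extension is again a meromorphic immersion, whose indeterminacy set in the new region is precisely the discrete collection of disc centres that have been appended.

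The main obstacle is to carry out this capping coherently over \emph{all} of $T$, not merely on a neighbourhood of each point where a vanishing end happens to be ``seen''; this is exactly what Theorem \ref{hartogs} is designed for. Given a connected component of a family of annular parabolic ends of $V_T^0$ that is locally capped by the construction of the previous paragraph, Theorem \ref{hartogs} (applied inside $X$, which is compact K\"ahler) propagates the local meromorphic family of discs to the entire connected component of the parameter space, with indeterminacy set of codimension at least $2$, hence meeting each fiber in finitely many points. Uniqueness of simple disc extensions up to M\"obius reparametrisation guarantees that distinct local cappings agree on overlaps and therefore glue into a single meromorphic family of discs. Doing this for every connected family of vanishing parabolic ends of $V_T^0$, the union $V_T=V_T^0\cup\{\text{caps}\}$ carries the required structure: $Q_T$ extends $Q_T^0$ to a submersion, $q_T:=q_T^0$ is a section, and $\pi_T$, equal to $\pi_T^0$ on $V_T^0$ and to the meromorphic cappings elsewhere, is a meromorphic immersion. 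Properties (i)--(iii) then follow at once from the very definitions of $\widehat{L_t}$ and of vanishing end: the caps added in the fiber over $t$ are, by construction, exactly the points of $\widehat{L_t}\setminus\widehat{L_t^0}$ (a discrete subset), and the fiberwise restriction of $\pi_T$ is the holonomy covering $\widehat{L_t}\to L_t\to X$.
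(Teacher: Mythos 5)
Your construction of $V_T^0$ as the holonomy quotient of the tangential path space is essentially the paper's first step, though you should note (as the paper does) that the Hausdorff property of the quotient is not automatic: it follows from the identity principle applied to germs in $Diff({\mathbb D}^{n-1},0)$, since two non-separated points would force a nontrivial holonomy to agree with the identity on a sequence of open sets accumulating at $0$.

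The capping step, however, has two genuine problems. First, the lifting picture is wrong. For $z\in{\mathbb D}^{n-1}\setminus I$ generic, the disc $g_z(W_z)$ avoids $Sing(\mathcal F)$ and hence lies entirely in the leaf $L_{t(z)}^0$; the boundary circle $f(\{z\}\times\partial A_r)$ therefore bounds a disc in that leaf, is null-homotopic, and has trivial holonomy. Consequently the preimage of the embedded annulus $f(\{z\}\times A_r)$ in $\widehat{L_{t(z)}^0}$ is a disjoint union of annuli each mapping \emph{diffeomorphically}, not as $k$-fold covers. Similarly, at $z=0$ the degree-$k$ map $f(\{0\},\cdot):A_r\to A_{r'}\subset E$ lifts diffeomorphically onto an annulus in the parabolic end $\widehat E$ (which is itself a $k$-fold cover of $E$). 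The paper accordingly lifts $g\vert_{W\setminus F}$ to $V_T^0$ and glues $W$ \emph{directly}; your pullback of $W$ by $w\mapsto w^k$ would give the wrong surface (its boundary annulus would cover $f(\{0\}\times\partial A_r)$ with degree $k^2$, not $k$).

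Second, Theorem \ref{hartogs} plays no role in the paper's proof of this proposition, and invoking it here reflects a misreading of where the difficulty lies. The meromorphic family of discs $g:W\dashrightarrow X$ is \emph{already provided} by Definition \ref{vanishing}; moreover, for $z$ outside a thin subset the fiber $W_z$ is disjoint from $Indet(g)$, so $g_z(W_z)\subset X^0$ and the glued disc is already present inside the fiber $\widehat{L_{t(z)}^0}$. The gluing genuinely adds a point only over a thin parameter set, so there is no open ``family of annular parabolic ends'' that needs to be propagated. The unparametrized Hartogs lemma is instead the crucial tool in Proposition \ref{coveringtube}, where it is used to prove the absence of vanishing cycles and hence the Hausdorffness of $U_T$; it should not appear in the argument for $V_T$.
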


\begin{proof}
We firstly prove a similar statement for the regular foliation
${\mathcal F}^0$ on $X^0$. We use Il'yashenko's methodology
\cite{Il1}; an alternative but equivalent one can be found in
\cite{Suz}, we have already seen it at the beginning of the proof of
Theorem \ref{ilyashenko}. In fact, in the case of a regular
foliation the construction of $V_T^0$ below is a rather classical
fact in foliation theory, which holds in the much more general
context of smooth foliations with real analytic holonomy.

Consider the space $\Omega_T^{{\mathcal F}^0}$ composed by
continuous paths $\gamma : [0,1]\to X^0$ tangent to ${\mathcal F}^0$
and such that $\gamma (0)\in T$, equipped with the uniform topology.
On $\Omega_T^{{\mathcal F}^0}$ we put the following equivalence
relation: $\gamma_1 \sim \gamma_2$ if $\gamma_1(0)=\gamma_2(0)$,
$\gamma_1(1)=\gamma_2(1)$, and the loop $\gamma_1\ast\gamma_2^{-1}$,
obtained by juxtaposing $\gamma_1$ and $\gamma_2^{-1}$, has trivial
holonomy.

Set $$V_T^0 = \Omega_T^{{\mathcal F}^0}\big/ \sim$$ with the
quotient topology. Note that we have natural continuous maps
$$Q_T^0 : V_T^0 \rightarrow T$$
and
$$\pi_T^0 : V_T^0 \rightarrow X^0$$
defined respectively by $[\gamma ]\mapsto \gamma (0)\in T$ and
$[\gamma ]\mapsto \gamma (1)\in X^0$. We also have a natural section
$$q_T^0 : T \rightarrow V_T^0$$
which associates to $t\in T$ the equivalence class of the constant
path at $t$. Clearly, for every $t\in T$ the pointed fiber
$((Q_T^0)^{-1}(t),q_T^0(t))$ is the same as $(\widehat{L_t^0},t)$,
by the very definition of holonomy covering, and $\pi_T^0$
restricted to that fiber is the holonomy covering map. Therefore, we
just have to find a complex structure on $V_T^0$ such that all these
maps become holomorphic.

We claim that $V_T^0$ is a Hausdorff space. Indeed, if
$[\gamma_1],[\gamma_2]\in V_T^0$ are two nonseparated points, then
$\gamma_1(0) = \gamma_2(0)=t$, $\gamma_1(1) = \gamma_2(1)$, and the
loop $\gamma_1\ast\gamma_2^{-1}$ in the leaf $L_t^0$ can be
uniformly approximated by loops $\gamma_{1,n}\ast\gamma_{2,n}^{-1}$
in the leaves $L_{t_n}^0$ ($t_n\to t$) with trivial holonomy (so
that $[\gamma_{1,n}]=[\gamma_{2,n}]$ is a sequence of points of
$V_T^0$ converging to both $[\gamma_1]$ and $[\gamma_2]$). But this
implies that also the loop $\gamma_1\ast\gamma_2^{-1}$ has trivial
holonomy, by the identity principle: if $h\in Diff({\mathbb
D}^{n-1},0)$ is the identity on a sequence of open sets accumulating
to $0$, then $h$ is the identity everywhere. Thus $[\gamma_1] =
[\gamma_2]$, and $V_T^0$ is Hausdorff.

Now, note that $\pi_T^0 : V_T\to X^0$ is a local homeomorphism.
Hence we can pull back to $V_T^0$ the complex structure of $X^0$,
and in this way $V_T^0$ becomes a complex manifold of dimension $n$
with all the desired properties. Remark that, at this point,
$\pi_T^0$ has not yet indeterminacy points, and so $V_T^0$ is a
so-called Riemann Domain over $X^0$.

In order to pass from $V_T^0$ to $V_T$, we need to add to each fiber
$\widehat{L_t^0}$ of $V_T^0$ the discrete set
$\widehat{L_t}\setminus\widehat{L_t^0}$.

Take a vanishing end $E\subset L_t^0$, of order $k$, let $f:{\mathbb
D}^{n-1}\times A_r\to X^0$ be an almost embedding adapted to $E$,
and let $g:W\dashrightarrow X$ be a meromorphic family of discs
extending $f$, immersive outside $F=Indet(g)$. Take also a parabolic
end $\widehat E\subset \widehat{L_t^0}$ projecting to $E$, with
degree $k$. By an easy holonomic argument, the immersion
$g\vert_{W\setminus F} : W\setminus F \rightarrow X^0$ can be lifted
to $V_T^0$, as a proper embedding
$$\widetilde g : W\setminus F \to V_T^0$$
which sends the central fiber $W_0\setminus F_0$ to $\widehat{E}$.
Each fiber $W_z\setminus F_z$ is sent by $\widetilde g$ to a closed
subset of a fiber $\widehat{L_{t(z)}^0}$, and each point of $F_z$
corresponds to a parabolic end of $\widehat{L_{t(z)}^0}$ projecting
to a vanishing end of $L_{t(z)}^0$.

Now we can glue $W$ to $V_T^0$ using $\widetilde g$: this
corresponds to compactify all parabolic ends of fibers of $V_T^0$
which project to vanishing ends and which are close to
$\widehat{E}$. By doing this operation for every $E$ and
$\widehat{E}$, we finally construct our manifold $V_T$, fibered over
$T$ with fibers $\widehat{L_t}$. The map $\pi_T$ extending
(meromorphically) $\pi_T^0$ is then deduced from the maps $g$ above.
\end{proof}

The manifold $V_T$ will be called {\bf holonomy tube} over $T$. The
meromorphic immersion $\pi_T$ is, of course, {\it very complicated}:
it contains all the dynamics of the foliation, so that it is,
generally speaking, very far from being, say, finite-to-one. Note,
however, that most fibers do not cut the indeterminacy set of
$\pi_T$, so that $\pi_T$ sends that fibers to leaves of ${\mathcal
F}^0$; moreover, most leaves have trivial holonomy (it is a general
fact \cite{CLN} that leaves with non trivial holonomy cut any
transversal along a thin subset), and so on most fibers $\pi_T$ is
even an isomorphism between the fiber and the corresponding leaf of
${\mathcal F}^0$. But be careful: a leaf may cut a transversal $T$
infinitely many times, and so $V_T$ will contain infinitely many
fibers sent by $\pi_T$ to the same leaf, as holonomy coverings
(possibly trivial) with different basepoints.

\subsection{Covering tubes}

The following proposition is similar, in spirit, to Proposition
\ref{holonomytube}, but, as we shall see, its proof is much more
delicate. Here the K\"ahler assumption becomes really indispensable,
via the unparametrized Hartogs extension lemma. Without the K\"ahler
hypothesis it is easy to find counterexamples (say, for foliations
on Hopf surfaces).

\begin{proposition}\label{coveringtube}
There exists a complex manifold $U_T$ of dimension $n$, a
holomorphic submersion
$$P_T : U_T \rightarrow T,$$
a holomorphic section
$$p_T : T \rightarrow U_T,$$
and a surjective holomorphic immersion
$$F_T : U_T \rightarrow V_T$$
such that:
\begin{enumerate}
\item[(i)] for every $t\in T$, the pointed fiber $(P_T^{-1}(t),
p_T(t))$ is isomorphic to $(\widetilde{L_t},t)$;
\item[(ii)] for every $t\in T$, $F_T$ sends the fiber
$(\widetilde{L_t},t)$ to the fiber $(\widehat{L_t},t)$, as universal
covering.
\end{enumerate}
\end{proposition}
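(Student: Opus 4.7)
My plan is to construct $U_T$ by a path-lifting procedure on top of $V_T$, much as $V_T$ itself was constructed in Proposition~\ref{holonomytube}, but now taking all homotopy classes rather than only those modulo holonomy. A point of $U_T$ will be an equivalence class $(t,[\gamma])$ where $\gamma:[0,1]\to\widehat{L_t}$ is a continuous path with $\gamma(0)=q_T(t)$, two such paths being identified iff they have the same endpoint and are homotopic rel endpoints inside $\widehat{L_t}$. The maps $P_T([\gamma])=t$, $F_T([\gamma])=\gamma(1)\in V_T$ and $p_T(t)=[\text{constant path at }q_T(t)]$ are then forced, the fiber $P_T^{-1}(t)$ is tautologically $\widetilde{L_t}$, and $F_T$ induces on each fiber the universal covering map. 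I would topologize $U_T$ in the standard path-continuation way: a neighbourhood basis of $[\gamma]$ consists of classes $[\gamma\ast\delta]$ where $\delta$ is a short path starting at $\gamma(1)$ inside a flow-box for the fibration $Q_T:V_T\to T$. Since $Q_T$ is a regular holomorphic submersion, admitting flow-box trivializations around any compact subset of a fiber, this makes $F_T$ a local homeomorphism and $P_T$, $p_T$ continuous; surjectivity of $F_T$ is immediate from path-connectedness of $\widehat{L_t}$.

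The only delicate point is to show that $U_T$ is Hausdorff, and this is exactly where the unparametrized Hartogs extension Theorem~\ref{hartogs} (and the K\"ahler hypothesis on $X$) comes in. Suppose $[\gamma_1]\neq[\gamma_2]$ lie in the same fiber $\widetilde{L_{t_0}}$ but admit no disjoint neighbourhoods. By path-continuation one obtains sequences $\gamma_{j,n}$ of paths in $\widehat{L_{t_n}}$, $t_n\to t_0$, converging to $\gamma_j$, such that $\gamma_{1,n}\ast\gamma_{2,n}^{-1}$ bounds an immersed disc inside $\widehat{L_{t_n}}$ for each $n$. I want to deduce that the limit loop $\alpha=\gamma_1\ast\gamma_2^{-1}$ is nullhomotopic in $\widehat{L_{t_0}}$. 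After reducing to the case where $\alpha$ is a simple closed curve in $\widehat{L_{t_0}^0}$, I would take a thin annular tubular neighbourhood $A_r$ of $\alpha$; the flow-box structure of $Q_T$ trivializes $V_T$ near $A_r$ as an embedding $\mathbb{D}^{n-1}\times A_r\hookrightarrow V_T$ over a neighbourhood of $t_0$ in $T$, and composition with the meromorphic immersion $\pi_T$ yields an almost embedding $f:\mathbb{D}^{n-1}\times A_r\to X$ into the compact K\"ahler manifold. For $t$ near $t_n$ the annulus $f(t,A_r)$ extends to a disc (the $\pi_T$-image of the disc bounded by $\gamma_{1,n}\ast\gamma_{2,n}^{-1}$), so Theorem~\ref{hartogs} extends $f$ to a meromorphic family of discs over all of $\mathbb{D}^{n-1}$. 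The central disc lifts through the local immersion $\pi_T$ back to a disc in $\widehat{L_{t_0}}$ bounded by $\alpha$, the orbifold compactification at vanishing ends absorbing the finitely many indeterminacies of the extension. This contradicts the assumption.

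Once Hausdorffness is in hand, $U_T$ inherits a complex structure by pulling back the complex structure of $V_T$ through the local homeomorphism $F_T$; all of the structural maps become automatically holomorphic, and properties (i) and (ii) are built into the construction. The main obstacle I anticipate is therefore the Hausdorff step: specifically, arranging the map $\pi_T\circ(\text{flow-box embedding})$ to be a genuine almost embedding into $X$ rather than merely an immersion, which forces a careful choice of $\alpha$ (as a simple loop, in the regular part of the leaf, at a point where holonomy is trivial) and a sufficiently thin tubular neighbourhood; and, secondly, verifying that the extension produced by Theorem~\ref{hartogs} over $t_0$ really yields a disc in the right fiber $\widehat{L_{t_0}}$ rather than a degenerate limit with rational bubbles, which should follow from a Bishop-type compactness argument in the spirit of Lemma~\ref{stokes} together with the fact that $g$ in Theorem~\ref{hartogs} is a family of genus-zero curves.
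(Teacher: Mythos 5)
Your proposal follows essentially the same route as the paper: $U_T$ is built as homotopy classes of paths along the fibers of $V_T$, Hausdorffness is reformulated as nonexistence of vanishing cycles, and this is proved by applying Theorem~\ref{hartogs} to the almost embedding $\pi_T\circ f$ on an annular collar of the candidate cycle, then lifting the resulting meromorphic family of discs back through $V_T^0$ to $V_T$. The reduction from arbitrary loops to simple ones, which you acknowledge but do not carry out, is handled in the paper by an elementary surgery on the bounded regions $R_n$ cut out by the self-intersecting approximating loops, and your worry about rational bubbles is resolved there by observing that the Hartogs extension $g$ is a meromorphic immersion lifting to an embedding of $W$ into $V_T$.
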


\begin{proof}
We use the same methodology as in the first part of the previous
proof, with ${\mathcal F}^0$ replaced by the fibration $V_T$ and
$\Omega_T^{{\mathcal F}^0}$ replaced by $\Omega_T^{V_T} =$ space of
continuous paths $\gamma :[0,1]\to V_T$ tangent to the fibers and
starting from $q_T(T)\subset V_T$. But now the equivalence relation
$\sim$ is given by homotopy, not holonomy: $\gamma_1\sim\gamma_2$ if
they have the same extremities and the loop
$\gamma_1\ast\gamma_2^{-1}$ is homotopic to zero in the fiber
containing it. The only thing that we need to prove is that the
quotient space
$$U_T = \Omega_T^{V_T}\big/ \sim$$
is Hausdorff; then everything is completed as in the previous proof,
with $F_T$ associating to a homotopy class of paths its holonomy
class. The Hausdorff property can be spelled as follows
(``nonexistence of vanishing cycles''):
\begin{enumerate}
\item[(*)] if $\gamma : [0,1]\to \widehat{L_t}\subset V_T$ is a loop
(based at $q_T(t)$) uniformly approximated by loops $\gamma_n :
[0,1]\to \widehat{L_{t_n}}\subset V_T$ (based at $q_T(t_n)$)
homotopic to zero in $\widehat{L_{t_n}}$, then $\gamma$ is homotopic
to zero in $\widehat{L_t}$.
\end{enumerate}

Let us firstly consider the case in which $\gamma$ is a simple loop.
We may assume that $\Gamma = \gamma ([0,1])$ is a real analytic
curve in $\widehat{L_t}$, and we may find an embedding
$$f:{\mathbb D}^{n-1}\times A_r\to V_T$$
sending fibers to fibers and such that $\Gamma = f(0,\partial A_r)$.
Thus $\Gamma_n = f(z_n,\partial A_r)$ is homotopic to zero in its
fiber, for some sequence $z_n\to 0$. For evident reasons, if $z_n'$
is sufficiently close to $z_n$, then also $f(z_n',\partial A_r)$ is
homotopic to zero in its fiber. Thus, we have an open nonempty
subset $U\subset {\mathbb D}^{n-1}$ such that, for every $z\in U$,
$f(z,\partial A_r)$ is homotopic to zero in its fiber. Denote by
$D_z$ the disc in the fiber bounded by such $f(z,\partial A_r)$.

We may also assume that $\Gamma$ is disjoint from the discrete
subset $\widehat{L_t}\setminus\widehat{L_t^0}$, so that, after
perhaps restricting ${\mathbb D}^{n-1}$, the composite map
$$f' : \pi_T \circ f : {\mathbb D}^{n-1}\times A_r \rightarrow X$$
is holomorphic, and therefore it is an almost embedding. We already
know that, for every $z\in U$, $f'(z,A_r)$ extends to a disc, image
by $\pi_T$ of $D_z$. Therefore, by Theorem \ref{hartogs},
$f'({\mathbb D}^{n-1}\times A_r)$ extends to a meromorphic family of
discs
$$g : W \dashrightarrow X.$$
It may be useful to observe that such a $g$ is a meromorphic
immersion. Indeed, setting $F=Indet(g)$, the set of points of
$W\setminus F$ where $g$ is not an immersion is (if not empty) an
hypersurface. Such a hypersurface cannot cut a neighbourhood of the
boundary $\partial W$, where $g$ is a reparametrization of the
immersion $f'$. Also, such a hypersurface cannot cut the fiber $W_z$
when $z\in U$ is generic (i.e. $W_z\cap F =\emptyset$), because on a
neighbourhood of such a $W_z$ the map $g$ is a reparametrization of
the immersion $\pi_T$ on a neighbourhood of $D_z$. It follows that
such a hypersurface is empty.

As in the proof of Proposition \ref{holonomytube},
$g\vert_{W\setminus F}$ can be lifted, holomorphically, to $V_T^0$,
and then $g$ can be lifted to $V_T$, giving an embedding $\widetilde
g : W\to V_T$. Then $\widetilde g (W_0)$ is a disc in
$\widehat{L_t}$ with boundary $\Gamma$, and consequently $\gamma$ is
homotopic to zero in the fiber $\widehat{L_t}$.

Consider now the case in which $\gamma$ is possibly not simple. We
may assume that $\gamma$ is a smooth immersion with some points of
transverse selfintersection, and idem for $\gamma_n$. We reduce to
the previous simple case, by a purely topological argument.

Take the immersed circles $\Gamma =\gamma ([0,1])$ and $\Gamma_n
=\gamma_n([0,1])$. Let $R_n\subset \widehat{L_{t_n}}$ be the open
bounded subset obtained as the union of a small tubular
neighbourhood of $\Gamma_n$ and all the bounded components of
$\widehat{L_{t_n}}\setminus\Gamma_n$ isomorphic to the disc. Thus,
each connected component of $R_n\setminus\Gamma_n$ is either a disc
with boundary in $\Gamma_n$ (union of arcs between selfintersection
points), or an annulus with one boundary component in $\Gamma_n$ and
another one in $\partial R_n$; this last one is not the boundary of
a disc in $\widehat{L_{t_n}}\setminus R_n$. We have the following
elementary topological fact: if $\Gamma_n$ is homotopic to zero in
$\widehat{L_{t_n}}$, then it is homotopic to zero also in $R_n$.

Let $R\subset\widehat{L_t}$ be defined in a similar way, starting
from $\Gamma$. By the first part of the proof, if $D_n\subset
R_n\setminus\Gamma_n$ is a disc with boundary in $\Gamma_n$, then
for $t_n\to t$ such a disc converges to a disc
$D\subset\widehat{L_t}$ with boundary in $\Gamma$, i.e. to a disc
$D\subset R\setminus\Gamma$. Conversely, but by elementary reasons,
any disc $D\subset R\setminus\Gamma$ with boundary in $\Gamma$ can
be deformed to discs $D_n\subset R_n\setminus\Gamma_n$ with
boundaries in $\Gamma_n$. We deduce that $R$ is diffeomorphic to
$R_n$, or more precisely that the pair $(R,\Gamma )$ is
diffeomorphic to the pair $(R_n,\Gamma_n)$, for $n$ large. Hence
from $\Gamma_n$ homotopic to zero in $R_n$ we infer $\Gamma$
homotopic to zero in $R$, and a fortiori in $\widehat{L_t}$. This
completes the proof of the Hausdorff property (*).
\end{proof}

The manifold $U_T$ will be called {\bf covering tube} over $T$. We
have a meromorphic immersion
$$\Pi_T = \pi_T\circ F_T : U_T \dashrightarrow X$$
whose indeterminacy set $Indet(\Pi_T)$ cuts each fiber
$P_T^{-1}(t)=\widetilde{L_t}$ along the discrete subset which is the
preimage of $\widehat{L_t}\setminus\widehat{L_t^0}$ under the
covering map $\widetilde{L_t}\to\widehat{L_t}$. For every $t\in T$,
the restriction of $\Pi_T$ to $P^{-1}(t)$ coincides, after removal
of indeterminacies, with the universal covering $\widetilde{L_t}\to
L_t \buildrel i_t\over\to i_t(L_t)\subset X$.

The local biholomorphism $F_T : U_T\to V_T$ is a fiberwise covering,
but globally it may have a quite wild structure. Let us see two
examples.

\begin{example} \label{elliptic2}{\rm
We take again the elliptic fibration $S\buildrel\pi\over\to C$ of
Example \ref{elliptic}. Let $T\subset S$ be a small transverse disc
centered at $t_0\in F_0\setminus\{ q\}$. Then, because the holonomy
is trivial, $\widehat{L_t} = L_t$ for every $t$. We have already
seen that $L_{t_0}=F_0\setminus\{ q\}$, and obviously for $t\not=
t_0$, $L_t$ is the smooth elliptic curve through $t$. The covering
tube $V_T$ is simply $\pi^{-1}(\pi (T))\setminus\{ q\}$. Remark that
its central fiber is simply connected, but the other fibers are not.
All the fibers of $U_T$ are isomorphic to ${\mathbb C}$ (in fact,
one can see that $U_T\simeq T\times{\mathbb C}$). The map
$F_T:U_T\to V_T$, therefore, is injective on the central fiber, but
not on the other ones.

\begin{figure}[h]
\includegraphics[width=10cm,height=6cm]{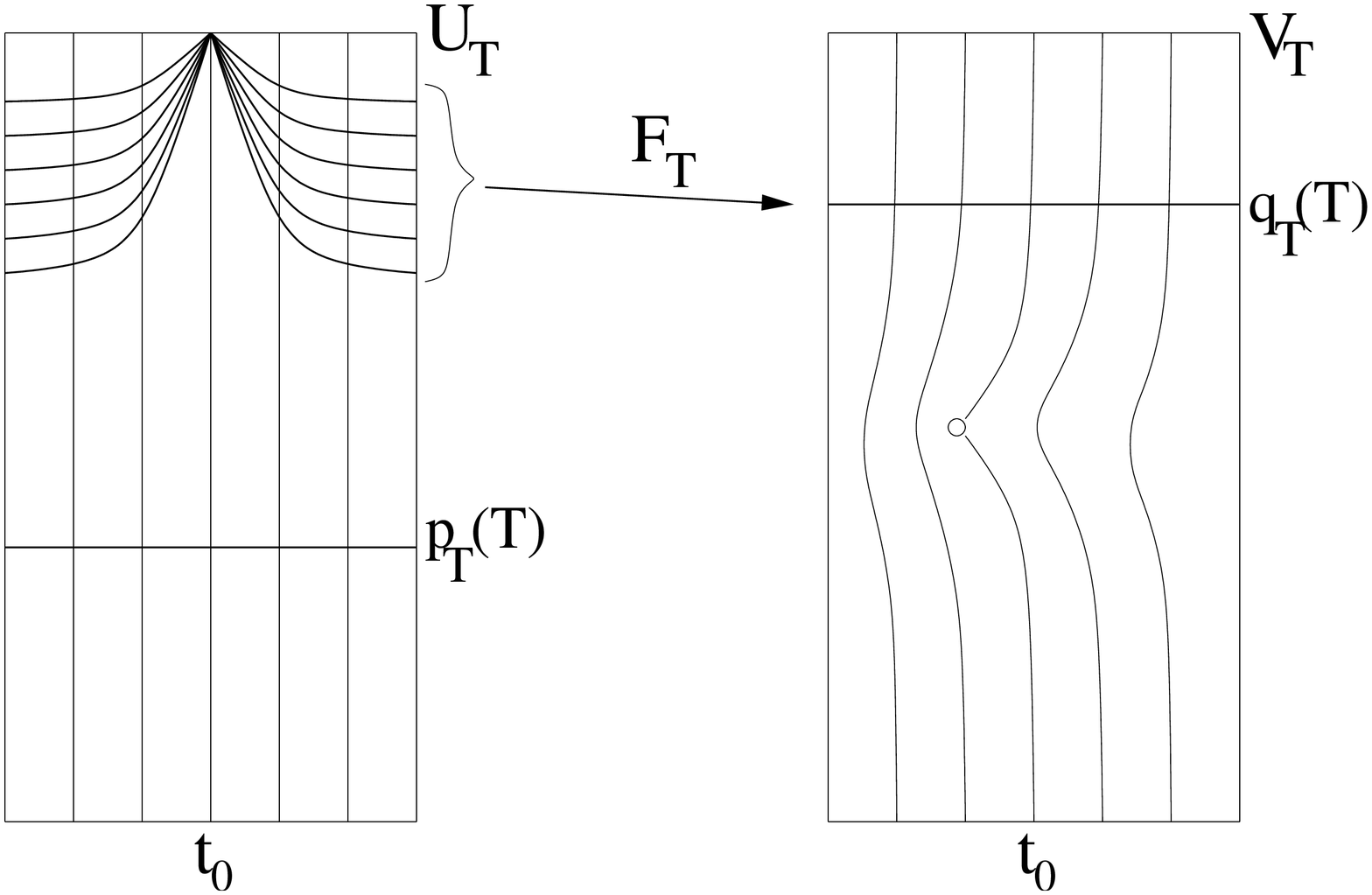}
\end{figure}

To see better what is happening, take the basepoints $q_T(T) \subset
V_T$ and consider the preimage $F_T^{-1}(q_T(T))\subset U_T$. This
preimage has infinitely many components: one of them is
$p_T(T)\subset U_T$, and each other one is the graph over
$T\setminus\{ t_0\}$ of a 6-valued section of $U_T$. This follows
from the fact that the monodromy of the elliptic fibration around a
fiber of type $II$ has order 6 \cite[V.10]{BPV}. The map $F_T$ sends
this 6-valued graph to $q_T(T\setminus\{ t_0\} )$, as a regular
6-fold covering. There is a ``virtual'' ramification of order 6 over
$q_T(t_0)$, which is however pushed-off $U_T$, to the point at
infinity of the central fiber.}
\end{example}

\begin{example} \label{elliptic3}{\rm
We take again an elliptic fibration $S\buildrel\pi\over\to C$, but
now with a fiber $\pi^{-1}(c_0) = F_0$ of Kodaira type $I_1$, i.e. a
rational curve with a node $q$. As before, $V_T$ coincides with
$\pi^{-1}(\pi (T))\setminus\{ q\}$, but now the central fiber is
isomorphic to ${\mathbb C}^*$. Again $U_T\simeq T\times{\mathbb C}$.
The map $F_T:U_T\to V_T$ is a ${\mathbb Z}$-covering over the
central fiber, a ${\mathbb Z}^2$-covering over the other fibers. The
preimage of $q_T(T)$ by $F_T$ has still infinitely many components.
One of them is $p_T(T)$. Some of them are graphs of (1-valued)
sections over $T$, passing through the (infinitely many) points of
$F_T^{-1}(q_T(t_0))$. But most of them are graphs of $\infty$-valued
sections over $T\setminus\{ t_0\}$ (like the graph of the
logarithm). Indeed, the monodromy of the elliptic fibration around a
fiber of type $I_1$ has infinite order \cite[V.10]{BPV}. If
$t\not=t_0$, then $F_T^{-1}(q_T(t))$ is a lattice in
$\widetilde{L_t}\simeq{\mathbb C}$, with generators $1$ and $\lambda
(t)\in{\mathbb H}$. For $t\to t_0$, this second generator diverges
to $+i\infty$, and the lattices reduces to ${\mathbb Z} =
F_T^{-1}(q_T(t_0))$. The monodromy acts as $(n,m\lambda (t))\mapsto
(n+m,m\lambda (t))$. Then each connected component of
$F_T^{-1}(q_T(T))$ intersects $\widetilde{L_t}$ either at a single
point $(n,0)$, fixed by the monodromy, or along an orbit
$(n+m{\mathbb Z}, m\lambda (t))$, $m\not=0$.}
\end{example}

More examples concerning elliptic fibrations can be found in
\cite{Br4}.

\begin{remark} {\rm
As we recalled in Section 2, similar constructions of $U_T$ and
$V_T$ have been done, respectively, by Il'yashenko \cite{Il1} and
Suzuki \cite{Suz}, in the case where the ambient manifold $X$ is a
Stein manifold. However, the Stein case is much simpler than the
compact K\"ahler one. Indeed, the meromorphic maps
$g:W\dashrightarrow X$ with which we work are automatically {\it
holomorphic} if $X$ is Stein. Thus, in the Stein case there are no
vanishing ends, i.e. $L_p=L_p^0$ for every $p$ and leaves of
${\mathcal F}$ = leaves of ${\mathcal F}^0$. Then the maps $\pi_T$
and $\Pi_T$ are {\it holomorphic} immersions of $V_T$ and $U_T$ into
$X^0$ (and so $V_T$ and $U_T$ are Riemann Domains over $X^0$). Also,
our unparametrized Hartogs extension lemma still holds in the Stein
case, but with a much simpler proof, because we do not need to worry
about ``rational bubbles'' arising in Bishop's Theorem.

In fact, there is a common framework for the Stein case and the
compact K\"ahler case: the framework of {\it holomorphically convex}
(not necessarily compact) K\"ahler manifolds. Indeed, the only form
of compactness that we need, in this Section and also in the next
one, is the following: for every compact $K\subset X$, there exists
a (larger) compact $\hat K\subset X$ such that every holomorphic
disc in $X$ with boundary in $K$ is fully contained in $\hat K$.
This property is obviously satisfied by any holomorphically convex
K\"ahler manifold, with $\hat K$ equal to the usual holomorphically
convex hull of $K$.}
\end{remark}

A more global point of view on holonomy tubes and covering tubes
will be developed in the last Section.

\subsection{Rational quasi-fibrations}

We conclude this section with a result which can be considered as an
analog, in our context, of the classical Reeb Stability Theorem for
real codimension one foliations \cite{CLN}.

\begin{proposition}\label{reeb}
Let $X$ be a compact connected K\"ahler manifold and let ${\mathcal
F}$ be a foliation by curves on $X$. Suppose that there exists a
rational leaf $L_p$ (i.e., $\widetilde{L_p}={\mathbb P}$). Then all
the leaves are rational. Moreover, there exists a compact connected
K\"ahler manifold $Y$, $\dim Y = \dim X -1$, a meromorphic map $B :
X\dashrightarrow Y$, and Zariski open and dense subsets $X_0\subset
X$, $Y_0\subset Y$, such that:
\begin{enumerate}
\item[(i)] $B$ is holomorphic on $X_0$ and $B(X_0)=Y_0$;
\item[(ii)] $B : X_0 \to Y_0$ is a proper submersive map, all of
whose fibers are smooth rational curves, leaves of ${\mathcal F}$.
\end{enumerate}
\end{proposition}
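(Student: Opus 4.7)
My strategy is to first establish the open-closed dichotomy: the locus of rational leaves is both open and closed in the connected manifold $X^0$, so by connectedness every leaf is rational once one is; then to package the resulting family of rational leaves as a $\mathbb{P}$-fibration via the Barlet cycle space.

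Suppose $\widetilde{L_p}\simeq\mathbb{P}$. Then $L_p$ is a compact orbifold quotient of $\mathbb{P}$ and $i_p(L_p)$ is a compact rational curve in $X$. Take a transversal $T$ to $\mathcal{F}^0$ at $p$ and consider the covering tube $P_T : U_T \to T$ of Proposition \ref{coveringtube}. The pointed central fiber is $(\mathbb{P},p)$, which is compact, and $P_T$ is a holomorphic submersion, so Ehresmann's fibration theorem yields a neighborhood $T'$ of $p$ in $T$ over which $P_T$ is a locally trivial $S^2$-bundle. Each nearby fiber $\widetilde{L_t}$ is thus a compact simply connected Riemann surface, hence isomorphic to $\mathbb{P}$. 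Pushing forward by $\Pi_T$, the rational locus is open in $X^0$. For closedness, let $q_n \to q_\infty$ in $X^0$ with $L_{q_n}$ rational. Using the local $\mathbb{P}$-bundle structure given by openness, a Stokes argument like that of Lemma \ref{stokes} applied in a flow box around $q_\infty$ shows that the areas $\int_{L_{q_n}} \omega$ stay uniformly bounded near $q_\infty$. Bishop's compactness theorem then yields a Hausdorff limit which is a finite union of compact rational curves; its unique component tangent to $\mathcal{F}$ at $q_\infty$ must be $i_{q_\infty}(L_{q_\infty})$, so $L_{q_\infty}$ is rational. Connectedness of $X$ then forces all leaves to be rational.

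To construct $Y$ and $B$, I would consider the irreducible component of the Barlet cycle space $\mathcal{C}(X)$ containing the class of one smooth rational leaf. By the uniform area bound this component is compact, and by the local trivialisation above it has dimension $n-1$. Let $Y$ be its normalisation, and let $Y_0\subset Y$ parametrise smooth leaves of $\mathcal{F}$ isomorphic to $\mathbb{P}$ with trivial holonomy. The assignment $q\mapsto[i_q(L_q)]$ defines a proper holomorphic submersion $B:X_0 \to Y_0$ on the corresponding Zariski open subset $X_0\subset X$, exhibiting $X_0$ as a smooth $\mathbb{P}$-bundle over $Y_0$, which gives (i) and (ii). The meromorphic extension $B:X\dashrightarrow Y$ comes from the closure of the graph in $X\times Y$, which is an analytic subset by Remmert-Stein applied across the thin locus $X\setminus X_0$ (contained in $Sing(\mathcal{F})$ plus the analytic set of leaves with nontrivial orbifold structure). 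Finally, $Y$ inherits a Kähler structure by Fujiki-Varouchas descent along the proper map $B$, using that the generic fibre is a smooth rational curve.

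I expect the main obstacle to be the uniform area bound used to feed Bishop's theorem in the closedness step: a priori nothing prevents rational leaves from acquiring arbitrarily large Kähler area and escaping to infinity in cycle space, and one must combine the local $\mathbb{P}$-bundle triviality with a connectedness-and-deformation argument on $X$ to propagate a global bound. A secondary difficulty is ensuring that $Y$ is a compact Kähler \emph{manifold} rather than merely a singular Kähler space; this is exactly where the global Kähler hypothesis on $X$ is indispensable, and it is also the reason why the Stein-type arguments of Section 2 do not directly apply in this compact setting.
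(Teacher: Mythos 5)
Your outline of the second half (Barlet cycle space, normalisation, properness over a Zariski open, K\"ahler descent) is consistent in spirit with what the paper delegates to ``standard arguments of complex analytic geometry'' with a pointer to \cite{CaP}. The real content of the proposition is the first half, and there your argument differs from the paper's and has a genuine gap, which you yourself flag.

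For closedness you attempt to bound $\int_{L_{q_n}}\omega$ uniformly, feed the bound to Bishop's theorem, and extract a limiting compact rational curve through $q_\infty$. But no such bound is available at this stage: the rational leaves $L_{q_n}$ near the boundary point $q_\infty$ are compact, closed curves in $X$, and their homology classes (hence their areas, which are homological invariants) need not remain in a bounded set as $n\to\infty$ — the open locus of rational leaves can a priori have infinitely many connected components accumulating at $q_\infty$, each carrying a different class. Lemma~\ref{stokes} gives no help here, since it bounds areas of \emph{discs with converging boundary circles}, not areas of \emph{closed} curves, and your ``local $\mathbb{P}$-bundle structure'' is only available on the open set you are trying to show is closed, not at the unknown limit point. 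So the step ``the areas stay uniformly bounded'' is exactly the missing ingredient, as you acknowledge. The paper avoids this entirely: it works in the holonomy tube $V_T$, takes an embedded cycle $\Gamma\subset\widehat{L_t}$ approximated by cycles $\Gamma_n\subset\widehat{L_{t_n}}\simeq\mathbb{P}$, observes that each $\Gamma_n$ bounds a disc on \emph{each} side, and then invokes the ``nonexistence of vanishing cycles'' mechanism already established in the proof of Proposition~\ref{coveringtube} (via Theorem~\ref{hartogs}) to conclude that $\Gamma$ bounds a disc on each side as well, so $\widehat{L_t}$ is compact. Here the required area control is furnished automatically by Lemma~\ref{stokes}, precisely because one is only ever estimating areas of discs whose boundary circles converge — the global area of the limiting leaf never enters.

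A secondary issue: your openness step invokes Ehresmann on $P_T:U_T\to T$ using only that the central fiber is compact. Ehresmann requires properness, and properness over a neighbourhood is not a formal consequence of compactness of a single fiber of a submersion (one can delete points from nearby fibers). In this specific situation the conclusion does hold — the paper calls it ``obvious'' — but the correct justification is a Reeb-stability-type argument in the tube (finite holonomy of a compact leaf, or again the disc-doubling argument), not a bare appeal to Ehresmann.
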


\begin{proof}
It is sufficient to verify that all the leaves are rational; then
the second part follows by standard arguments of complex analytic
geometry, see e.g. \cite{CaP}.

By connectivity, it is sufficient to prove that, given a covering
tube $U_T$, if some fiber is rational then all the fibers are
rational. We can work, equivalently, with the holonomy tube $V_T$.
Now, such a property was actually already verified in the proof of
Proposition \ref{coveringtube}, in the form of ``nonexistence of
vanishing cycles''. Indeed, the set of rational fibers of $V_T$ is
obviously open. To see that it is also closed, take a fiber
$\widehat{L_t}$ approximated by fibers $\widehat{L_{t_n}}\simeq
{\mathbb P}$. Take an embedded cycle $\Gamma\subset \widehat{L_t}$,
approximated by cycles $\Gamma_n\subset \widehat{L_{t_n}}$. Each
$\Gamma_n$ bounds in $\widehat{L_{t_n}}$ {\it two} discs, one on
each side. As in the proof of Proposition \ref{coveringtube}, we
obtain that $\Gamma$ also bounds in $\widehat{L_t}$ {\it two} discs,
one on each side. Hence $\widehat{L_t}$ is rational.
\end{proof}

Such a foliation will be called {\bf rational quasi-fibration}. A
meromorphic map $B$ as in Proposition \ref{reeb} is sometimes called
{\it almost holomorphic}, because the image of its indeterminacy set
is a proper subset of $Y$, of positive codimension, contained in
$Y\setminus Y_0$. If $\dim X=2$ then $B$ is necessarily holomorphic,
and the foliation is a rational fibration (with possibly some
singular fibers). In higher dimensions one may think that the
foliation is obtained from a rational fibration by a meromorphic
transformation which does not touch generic fibers (like flipping
along a codimension two subset).

Note that, as the proof shows, for a rational quasi-fibration every
holonomy tube and every covering tube is isomorphic to
$T\times{\mathbb P}$, provided that the transversal $T$ is
sufficiently small (every ${\mathbb P}$-fibration is locally
trivial).

There are certainly many interesting issues concerning rational
quasi-fibrations, but basically this is a chapter of Algebraic
Geometry. In the following, we shall forget about them, and we will
concentrate on foliations with parabolic and hyperbolic leaves.

\section{A convexity property of covering tubes}

Let $X$ be a compact K\"ahler manifold, of dimension $n$, and let
${\mathcal F}$ be a foliation by curves on $X$, different from a
rational quasi-fibration. Fix a transversal $T\subset X^0$ to
${\mathcal F}^0$, and consider the covering tube $U_T$ over $T$,
with projection $P_T:U_T\to T$, section $p_T:T\to U_T$, and
meromorphic immersion $\Pi_T:U_T\dashrightarrow X$. Each fiber of
$U_T$ is either ${\mathbb D}$ or ${\mathbb C}$.

We shall establish in this Section, following \cite{Br2} and
\cite{Br3}, a certain convexity property of $U_T$, which later will
allow us to apply to $U_T$ the results of Section 2 of Nishino and
Yamaguchi.

We fix also an embedded closed disc $S\subset T$
($S\simeq\overline{\mathbb D}$, and the embedding in $T$ is
holomorphic up to the boundary), and we denote by $U_S$, $P_S$,
$p_S$, $\Pi_S$ the corresponding restrictions. Set $\partial U_S =
P_S^{-1}(\partial S)$. We shall assume that $S$ satisfies the
following properties:
\begin{enumerate}
\item[(a)] $U_S$, as a subset of $U_T$, intersects $Indet(\Pi_T)$
along a discrete subset, necessarily equal to $Indet(\Pi_S)$, and
$\partial U_S$ does not intersect $Indet(\Pi_T)$;
\item[(b)] for every $z\in\partial S$, the area of the fiber
$P_S^{-1}(z)$ is infinite.
\end{enumerate}
In (b), the area is computed with respect to the pull-back by
$\Pi_S$ of the K\"ahler form $\omega$ of $X$. Without loss of
generality, we take $\omega$ real analytic. We will see later that
these assumptions (a) and (b) are ``generic'', in a suitable sense.

\begin{theorem}\label{convexity}
For every compact subset $K\subset \partial U_S$ there exists a real
analytic bidimensional torus $\Gamma\subset \partial U_S$ such that:
\begin{enumerate}
\item[(i)] $\Gamma$ is transverse to the fibers of $\partial
U_S\buildrel P_S\over\to \partial S$, and cuts each fiber
$P_S^{-1}(z)$, $z\in\partial S$, along a circle $\Gamma (z)$ which
bounds a disc $D(z)$ which contains $K\cap P_S^{-1}(z)$ and
$p_S(z)$;
\item[(ii)] $\Gamma$ is the boundary of a real analytic Levi-flat
hypersurface $M\subset U_S$, filled by a real analytic family of
holomorphic discs $D^\theta$, $\theta\in{\mathbb S}^1$; each
$D^\theta$ is the graph of a section $s^\theta : S\to U_S$,
holomorphic up to the boundary, with $s^\theta (\partial S)
\subset\Gamma$;
\item[(iii)] $M$ bounds in $U_S$ a domain $\Omega$, which cuts each
fiber $P_S^{-1}(z)$, $z\in S$, along a disc $\Omega (z)$ which
contains $p_S(z)$ ($\Omega (z) =D(z)$ when $z\in\partial S$).
\end{enumerate}
\end{theorem}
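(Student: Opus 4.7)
The plan is to build $\Omega$ as a tube of constant ``conformal radius'' around the basepoint section $p_S(S)$ in a global holomorphic uniformisation of the fibration $P_S : U_S \to S$, with radius chosen large enough that the fibrewise discs contain $K$ on $\partial U_S$. First I would fix a nowhere-vanishing holomorphic framing $e$ of the vertical tangent line bundle along $p_S(S)$, possible because $S$ is Stein. The fibration $P_S$ has simply connected fibres ($\simeq \mathbb{D}$ or $\mathbb{C}$), a holomorphic section $p_S$, and a simply connected base. Assuming all fibres over $S$ are hyperbolic, the pointed-and-framed Riemann mapping should yield a biholomorphism $\phi : S \times \mathbb{D} \to U_S$ over $S$, with $\phi(z,0) = p_S(z)$ and $\partial_w \phi(z,0) = e(z)$. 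The holomorphic dependence on $z$ is the key analytic point, and it should follow from Carath\'eodory-type convergence of Riemann maps combined with the plurisubharmonic variation of the Poincar\'e metric (Proposition~\ref{yamaguchi}). In the purely parabolic case one would instead invoke Theorem~\ref{nishino}.

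Once $\phi$ is in hand the rest is explicit. By compactness of $K$ and of $\partial S$ and continuity of $\phi^{-1}$, there is $R_0 < 1$ (resp.\ $R_0 < +\infty$ in the parabolic case) with $\phi^{-1}(K) \cap (\partial S \times \mathbb{D}) \subseteq \partial S \times \{|w| \le R_0\}$. I then fix any $R \in (R_0, 1)$ and set, for $z \in S$ and $\theta \in \mathbb{S}^1$,
\[
s^\theta(z) := \phi(z, R e^{i\theta}), \quad D^\theta := s^\theta(S), \quad M := \phi\bigl(S \times \{|w| = R\}\bigr), \quad \Omega := \phi\bigl(S \times \{|w| < R\}\bigr),
\]
and $\Gamma := M \cap \partial U_S = \phi(\partial S \times \{|w| = R\})$. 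Each $s^\theta$ is a holomorphic section, holomorphic up to $\partial S$; its graph $D^\theta$ is a holomorphic disc; $M = \bigcup_{\theta \in \mathbb{S}^1} D^\theta$ is Levi-flat, being foliated by these discs; and $\Gamma$ is a real-analytic $2$-torus transverse to the fibres. Each fibre $\Omega(z) = \phi(\{z\} \times \{|w| < R\})$ is a disc containing $p_S(z) = \phi(z,0)$ and, for $z \in \partial S$, containing $K \cap P_S^{-1}(z)$ by the choice of $R$. This yields all three items of the theorem.

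The main obstacle is precisely the first step: the existence of a global holomorphic uniformisation $\phi$. Morally this asserts that a holomorphic family of pointed simply connected Riemann surfaces over a simply connected base is holomorphically trivial (no Teichm\"uller moduli), but the detailed analytic justification is substantive. In the pure hyperbolic case, Yamaguchi's plurisubharmonic variation of the Poincar\'e metric is the engine that makes the fibrewise Riemann mapping depend holomorphically on $z$. In the pure parabolic case, Theorem~\ref{nishino} gives $U_S \simeq S \times \mathbb{C}$ directly, once one has verified its plurisubharmonic variation hypothesis; this is where assumption~(b) enters, ruling out degenerate configurations. The most delicate situation is the mixed one, where hyperbolic and parabolic fibres coexist over $S$: the parabolic locus is a pluripolar subset of $S$, and one must approximate from the purely hyperbolic sub-tubes and carefully pass to the limit as the conformal radius degenerates on parabolic fibres.
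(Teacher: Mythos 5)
Your proposal is not correct, and the gap is fundamental rather than a detail. The biholomorphism $\phi : S\times\mathbb{D}\to U_S$ that you want to build in the hyperbolic case does not exist in general: the covering tube $U_S\to S$ with disc fibres is \emph{not} a holomorphic product over $S$. To see this, note that if $U_S\simeq S\times\mathbb{D}$ over $S$ then the deck transformation groups $\Gamma_z<\mathrm{Aut}(\mathbb{D})=PSL(2,\mathbb{R})$ of the universal covering maps $\Pi_S|_{\{z\}\times\mathbb{D}}$ would vary holomorphically in $z$; but $PSL(2,\mathbb{R})$ is a real Lie group, so such a variation is locally constant, forcing all the leaves met by $S$ to be isomorphic. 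For any non-isotrivial family of leaves (e.g.\ a generic non-isotrivial fibration by compact hyperbolic curves) this is false. Concretely, the obstruction is the one you already name but misjudge: plurisubharmonic variation of the fibrewise Poincar\'e metric is \emph{far weaker} than the pluriharmonic variation that would be needed for the fibrewise Riemann maps to depend holomorphically on $z$. The Hartogs tube $U=\{(z,w):|w|<e^{-f(z)}\}$ makes this explicit: the Riemann maps are $w\mapsto e^{-f(z)}w$, which are holomorphic in $z$ only when $f$ is pluriharmonic, whereas psh variation corresponds to $f$ merely plurisubharmonic.

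There is also a circularity in your use of Proposition~\ref{yamaguchi} and Theorem~\ref{nishino}. Proposition~\ref{yamaguchi} applies to fibrations of relatively compact discs with real-analytic pseudoconvex boundary; $U_S$ has non-compact fibres and no such boundary, so it does not apply to $U_S$ directly. In the paper it is precisely Theorem~\ref{convexity} that supplies the Levi-flat domains $\Omega_j$ to which Proposition~\ref{yamaguchi} can be applied (see Proposition~\ref{psh}); likewise the psh-variation hypothesis of Theorem~\ref{nishino} is verified for $U_T$ only \emph{after} Theorem~\ref{convexity}, in Theorem~\ref{trivialtube}. So you cannot use those results as inputs here. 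The actual proof is of an entirely different nature: it chooses a one-parameter family of tori $\Gamma_t\subset\partial U_S$ with a carefully normalised (constant-area) fibrewise choice, and runs a continuity method on the nonlinear Riemann--Hilbert problem of finding Levi-flat hypersurfaces $M_t$ with boundary $\Gamma_t$; the Levi-flat extension $G:S\times\overline{\mathbb{D}}\to U_S$ that results is only a \emph{real-analytic} embedding, not a biholomorphism, which is exactly the correct level of rigidity here. If Theorem~\ref{convexity} were equivalent to the existence of a global holomorphic trivialisation, it would be either trivial or false; its whole content is the weaker convexity statement.
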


This statement should be understood as expressing a variant of
Hartogs-convexity \cite[II.2]{Ran}, in which the standard Hartogs
figure is replaced by $p_S(S)\cup (\cup_{z\in\partial S}D(z))$, and
its envelope is replaced by $\Omega$. By choosing a large compact
$K$, condition (i) says that $\cup_{z\in\partial S}D(z)$ almost fill
the lateral boundary $\partial U_S$; conditions (ii) and (iii) say
that the family of discs $D(z)$, $z\in\partial S$, can be pushed
inside $S$, getting a family of discs $\Omega (z)$, $z\in S$, in
such a way that the boundaries $\partial\Omega (z)$, $z\in S$, vary
with $z$ in a ``holomorphic'' manner (``variation analytique'' in
the terminology of \cite{Ya3}). It is a sort of ``geodesic''
convexity of $U_S$, in which the extremal points of the geodesic are
replaced by $\Gamma$ and the geodesic is replaced by $M$.

Theorem \ref{convexity} will be proved by solving a nonlinear
Riemann-Hilbert problem, see \cite{For} and \cite[Ch. 20]{AWe} and
reference therein for some literature on this subject. An important
difference with this classical literature, however, is that the
torus $\Gamma$ is not fixed a priori: we want just to prove that
{\it some} torus $\Gamma$, enclosing the compact $K$ as in (i), is
the boundary of a Levi-flat hypersurface $M$ as in (ii); we do not
pretend that {\it every} torus $\Gamma$ has such a property. Even
if, as we shall see below, we have a great freedom in the choice of
$\Gamma$.

We shall use the continuity method. The starting point is the
following special (but not so much) family of tori.

\begin{lemma} \label{family}
Given $K\subset\partial U_S$ compact, there exists a real analytic
embedding
$$F:\partial S\times\overline{\mathbb D}\rightarrow \partial U_S,$$
sending fibers to fibers, such that:
\begin{enumerate}
\item[(i)] $\partial S\times\{ 0\}$ is sent to $p_S(\partial
S)\subset\partial U_S$;
\item[(ii)] $\partial S\times\{ |w|=t \}$, $t\in (0,1]$, is sent
to a real analytic torus $\Gamma_t\subset \partial U_S$ transverse
to the fibers of $P_S$, so that for every $z\in\partial S$,
$\Gamma_t(z)=\Gamma_t\cap P_S^{-1}(z)$ is a circle bounding a disc
$D_t(z)$ containing $p_S(z)$;
\item[(iii)] $D_1(z)$ contains $K\cap P_S^{-1}(z)$, for every
$z\in\partial S$;
\item[(iv)] for every $t\in (0,1]$ the function
$${\bf a}_t : \partial S \rightarrow {\mathbb R}^+\quad , \qquad
{\bf a}_t(z) = {\rm area} (D_t(z))$$ is constant (the constant
depending on $t$, of course).
\end{enumerate}
\end{lemma}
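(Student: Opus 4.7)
The plan is to construct a real-analytic family of holomorphic uniformizations of the fibers centered at $p_S$, and then to reparametrize radially so that the resulting concentric circles bound discs whose area depends only on the radius. First, using that $P_S:U_S\to S$ is a holomorphic submersion with holomorphic section $p_S$ over the Stein disc $S$, I would note that the vertical tangent bundle along $p_S(S)$ is a trivial holomorphic line bundle and fix a holomorphic non-vanishing vertical frame $v(z)$. For each $z\in\partial S$ I would let $\psi_z:P_S^{-1}(z)\to\Sigma_z$ (with $\Sigma_z\in\{\mathbb D,\mathbb C\}$) be the uniformization normalized by $\psi_z(p_S(z))=0$ and $d\psi_z(p_S(z))\cdot v(z)=1$; these two conditions pin down $\psi_z$ uniquely modulo $\mathrm{Aut}(\Sigma_z,0)$, hence uniquely. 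On any simply connected arc of $\partial S$ the parametric dependence of the Riemann map on holomorphic data yields a real-analytic family $\{\psi_z\}$, and uniqueness then forces this family to be single-valued and real-analytic on all of $\partial S\simeq S^1$. I would set $D_s(z)=\psi_z^{-1}(\mathbb D(s))$ for $s\in(0,R)$, where $R=1$ in the hyperbolic case and $R=+\infty$ in the parabolic case.

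Next I would consider the area function
$$\mathbf a(z,s)=\int_{D_s(z)}\Pi_S^*\omega,$$
which is real-analytic in $(z,s)$, strictly increasing in $s$, vanishes at $s=0$, and diverges as $s\uparrow R$ by hypothesis (b). The implicit function theorem will produce a real-analytic inverse $s=s(z,A)$ solving $\mathbf a(z,s)=A$; I would choose $A_1>0$ large enough that $D_{s(z,A_1)}(z)\supset K\cap P_S^{-1}(z)$ for every $z\in\partial S$, which is possible because the map $(z,q)\mapsto\psi_{P_S(q)}(q)$ is continuous on the compact set $K$. Moreover, one has $\mathbf a(z,s)=h_0(z)\pi s^2+O(s^4)$ as $s\to 0$, where $h_0(z)>0$ is the density of $\Pi_S^*\omega$ at $p_S(z)$ with respect to the coordinate $\psi_z$, so $(s(z,A_1t^2)/t)^2$ is a real-analytic positive function of $(z,t^2)$.

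Finally I would define
$$F(z,w)=\psi_z^{-1}\!\left(\frac{s(z,A_1|w|^2)}{|w|}\,w\right)\quad(w\neq 0),\qquad F(z,0)=p_S(z).$$
Since $s(z,A_1|w|^2)/|w|$ is the square root of a positive real-analytic function of $(z,|w|^2)$, $F$ is real-analytic on $\partial S\times\overline{\mathbb D}$; near $w=0$ it is in fact holomorphic in $w$ with non-zero derivative, so $F$ is an embedding. Conditions (i)--(iv) will then follow by inspection: (i) from $F(z,0)=p_S(z)$; (ii) because the circle $\Gamma_t(z)=F(\{z\}\times\{|w|=t\})=\psi_z^{-1}(\{|u|=s(z,A_1t^2)\})$ bounds the disc $D_t(z):=D_{s(z,A_1t^2)}(z)\ni p_S(z)$; (iii) by the choice of $A_1$; (iv) since $\mathbf a_t(z)=A_1t^2$ independently of $z$. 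The hard part will be establishing the real-analytic dependence and single-valuedness of the uniformization family $\{\psi_z\}_{z\in\partial S}$: this combines the parametric uniformization of simply connected Riemann surfaces over a holomorphic submersion with the observation that simultaneously pinning down a basepoint and a tangent vector there leaves no freedom left in $\mathrm{Aut}(\mathbb D,0)$ or $\mathrm{Aut}(\mathbb C,0)$.
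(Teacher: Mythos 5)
Your proposal has a genuine gap at its core: you want to uniformize the \emph{entire} fiber $P_S^{-1}(z)$ by $\psi_z:P_S^{-1}(z)\to\Sigma_z$, and you assert (and defer to ``the hard part'') that the family $\{\psi_z\}_{z\in\partial S}$ depends real-analytically on $z$. There is no such theorem for uniformizations of non-compact simply connected fibers, and in fact real-analytic dependence cannot be expected here. The transverse regularity of exactly this data --- say, the quantity $\log|d\psi_z(p_S(z))\cdot v(z)|$, which is minus the log of the Poincar\'e norm of $v(z)$ --- is precisely what Sections~5 and~6 of the paper are trying to establish, and even there one only obtains plurisubharmonicity (Proposition~\ref{psh}), not real-analyticity; continuity itself requires extra hypotheses (Theorem~\ref{continuity}). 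Moreover the fibers over $\partial S$ are only assumed to have infinite area, which does not preclude a mixture of $\mathbb D$'s and $\mathbb C$'s, so even the target of $\psi_z$ may jump. Assuming the $\psi_z$ vary real-analytically at this stage would make the lemma circular (you would essentially be assuming a far stronger form of the conclusion of Theorem~\ref{kizuka}).

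The paper's proof sidesteps this entirely: it first constructs by hand a \emph{smooth} torus $\Gamma'$ enclosing $K$ and $p_S(\partial S)$ whose cut-out discs $D'(z)$ all have area exactly~$1$, then replaces it by a nearby \emph{real-analytic} torus $\Gamma''$, and only then applies the Riemann mapping --- but to the \emph{relatively compact} discs $\overline{D''(z)}$, which have real-analytic boundary. For bounded plane domains with real-analytic boundary, real-analytic dependence of the normalized Riemann map on the boundary is a classical fact, and that is the only uniformization input the paper uses; the circles $\Gamma_t(z)$ are then the $\psi$-preimages of round circles chosen so that the enclosed area is exactly $t$. Your radial reparametrization idea (inverting the area function in the radius to equalize areas) is in the same spirit as the paper's final step, but it only works after one has a real-analytically varying conformal parametrization of a \emph{bounded} neighbourhood of $p_S$ in each fiber, not of the whole fiber.
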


\begin{proof}
Because the fibers over $\partial S$ have infinite area, we can
certainly find a smooth torus $\Gamma '\subset\partial U_S$ which
encloses $K$ and $p_S(\partial S)$, and such that all the discs
$D'(z)$, bounded by $\Gamma '(z)$, have the same area, say 1. We may
approximate $\Gamma '$ with a real analytic torus $\Gamma ''$; the
corresponding discs $D''(z)$ have now variable area, but close to 1,
say between 1 and $1+\varepsilon$.

For every $z\in\partial S$ we have in $\overline{D''(z)}\setminus
p_S(z)$ a canonical foliation by circles,  the standard circles
under the uniformisation $(\overline{D''(z)},p_S(z)) \simeq
(\overline{\mathbb D},0)$. For every $t\in (0,1]$, let $\Gamma_t(z)$
be the circle of that foliation which bounds a disc of area equal to
$t$. Then, because all the data ($\Gamma ''$, $\omega$,...) are real
analytic, the union $\Gamma_t = \cup_{z\in\partial S}\Gamma_t(z)$ is
a real analytic torus, and these tori glue together in a real
analytic way. If the initial perturbation is sufficiently small,
$\Gamma_1$ encloses $K$. And the function ${\bf a}_t$ is constantly
equal to $t$.
\end{proof}

Given $F$ as in Lemma \ref{family}, we shall say that a real
analytic embedding
$$G : S\times\overline{\mathbb D} \rightarrow U_S$$
is a {\bf Levi-flat extension} of $F$ if $G$ sends fibers to fibers
and:
\begin{enumerate}
\item[(i)] $G(S\times\{ 0\}) = p_S(S)$;
\item[(ii)] $G(S\times\{ |w|=t \})$, $t\in (0,1]$, is a real analytic
Levi-flat hypersurface $M_t\subset U_S$ with boundary $\Gamma_t$,
filled by graphs of holomorphic sections over $S$ with boundary
values in $\Gamma_t$.
\end{enumerate}
Our aim is to construct such a $G$. Then $\Gamma = \Gamma_1$ and
$M=M_1$ gives Theorem \ref{convexity}.

The continuity method consists in analyzing the set of those $t_0\in
(0,1]$ such that a similar $G$ can be constructed over
$S\times\overline{{\mathbb D}(t_0)}$. We need to show that this set
is nonempty, open and closed.

Nonemptyness is a consequence of classical results \cite{For}. Just
note that a neighbourhood of $p_S(S)$ can be embedded in ${\mathbb
C}^2$, in such a way that $P_S$ becomes the projection to the first
coordinate, and $p_S(S)$ becomes the closed unit disc in the first
axis. Hence $\Gamma_t$, $t$ small, becomes a torus in
$\partial{\mathbb D}\times{\mathbb C}$ enclosing $\partial{\mathbb
D}\times\{ 0\}$. Classical results on the Riemann-Hilbert problem in
${\mathbb C}^2$ imply that, for $t_0 > 0$ sufficiently small, there
exists a Levi-flat extension on $S\times\overline{{\mathbb
D}(t_0)}$.

Openness is a tautology. By definition, a real analytic embedding
defined on $S\times\overline{{\mathbb D}(t_0)}$ is in fact defined
on $S\times {\mathbb D}(t_0 + \varepsilon )$, for some $\varepsilon
> 0$, and obviously if $G$ is a Levi-flat extension on $S\times
\overline{{\mathbb D}(t_0)}$, then it is a Levi-flat extension also
on $S\times\overline{{\mathbb D}(t_0+\frac{\varepsilon}{2})}$.

The heart of the matter is closedness. In other words, we need to
prove that

{\it if a Levi-flat extension exists on $S\times {\mathbb D}(t_0)$,
then it exists also on $S\times\overline{{\mathbb D}(t_0)}$.}

The rest of this Section is devoted to the proof of this statement.

\subsection{Boundedness of areas}

We shall denote by $D_t^\theta$, $\theta\in{\mathbb S}^1$, the
closed holomorphic discs filling $M_t$, $0 < t <t_0$. Each
$D_t^\theta$ is the graph of a section $s_t^\theta : S\to U_S$,
holomorphic up to the boundary, with boundary values in $\Gamma_t$.

Consider the areas of these discs. These areas are computed with
respect to $\Pi_S^*(\omega )=\omega_0$, which is a real analytic
K\"ahler form on $U_S\setminus Indet(\Pi_S)$. Because $H^2
(U_S\setminus Indet(\Pi_S), {\mathbb R})=0$ (for $U_S$ is a
contractible complex surface and $Indet(\Pi_S)$ is a discrete
subset), this K\"ahler form is exact:
$$\omega_0 = d\lambda$$
for some real analytic 1-form $\lambda$ on $U_S\setminus
Indet(\Pi_S)$. If $D_t^\theta$ is disjoint from $Indet(\Pi_S)$, then
its area $\int_{D_t^\theta}\omega_0$ is simply equal, by Stokes
formula, to $\int_{\partial D_t^\theta}\lambda$. If $D_t^\theta$
intersects $Indet(\Pi_S)$, this is no more true, but still we have
the inequality
$${\rm area }(D_t^\theta) = \int_{D_t^\theta}\omega \le
\int_{\partial D_t^\theta}\lambda .$$ The reason is the following:
by the meromorphic map $\Pi_S$ the disc $D_t^\theta$ is mapped not
really to a disc in $X$, but rather to a disc {\it plus} some
rational bubbles coming from indeterminacy points of $\Pi_S$; then
$\int_{\partial D_t^\theta}\lambda$ is equal to the area of the disc
{\it plus} the areas of these rational bubbles, whence the
inequality above. Remark that, by our standing assumptions, the
boundary of $D_t^\theta$ is contained in $\partial U_S$ and hence it
is disjoint from $Indet(\Pi_S)$.

Now, the important fact is that, thanks to the crucial condition
(iv) of Lemma \ref{family}, we may get a {\it uniform} bound of
these areas.

\begin{lemma} \label{areabound}
There exists a constant $C > 0$ such that for every $t\in (0,t_0)$
and every $\theta\in{\mathbb S}^1$:
$${\rm area}(D_t^\theta) \le C .$$
\end{lemma}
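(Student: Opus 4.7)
The inequality $\mathrm{area}(D_t^\theta)\le\int_{\partial D_t^\theta}\lambda$ is already recorded, so it suffices to bound $\int_{\partial D_t^\theta}\lambda$ by a constant independent of $(t,\theta)\in(0,t_0)\times\mathbb{S}^1$. The strategy is to compare each loop $\partial D_t^\theta$ to a fixed reference cycle inside the torus $\Gamma_t$, and to control the Stokes defect by a $t$-dependent quantity which stays bounded on $(0,t_0]$.

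Use the real analytic embedding $F:\partial S\times\overline{\mathbb D}\to\partial U_S$ provided by Lemma \ref{family}. In these coordinates $\Gamma_t$ is the $2$-torus $\partial S\times\{|w|=t\}$, with a distinguished horizontal cycle $H_t:=F(\partial S\times\{t\})$ and vertical fibres $V_t(z):=\Gamma_t(z)$. Since $\partial D_t^\theta=s_t^\theta(\partial S)$ is a smooth section of the circle-bundle $\Gamma_t\to\partial S$, its homology class in $H_1(\Gamma_t;\mathbb{Z})=\mathbb{Z}^2$ has the form $[H_t]+n_t[V_t]$ for some integer $n_t$; and $n_t$ depends on $t$ alone, because the loops $\partial D_t^\theta$ foliate $\Gamma_t$ as $\theta$ varies in $\mathbb{S}^1$. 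The crucial topological claim is that $n_t\equiv 0$: being $\mathbb{Z}$-valued and continuous in $t$ on the connected interval $(0,t_0)$, $n_t$ is locally, hence globally, constant; and for $t$ small enough the sections $s_t^\theta$ furnished by the openness step of the continuity method are, in a bidisc chart around $p_S(S)$, small perturbations of the constants $z\mapsto te^{i\theta}$, whose boundary loops have trivial winding in the fibre direction.

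Granted $n_t=0$, choose a $2$-chain $B_t^\theta\subset\Gamma_t$ with $\partial B_t^\theta=\partial D_t^\theta-H_t$ (for instance the angular strip $\{(z,\phi):0\le\phi\le\phi_t^\theta(z)\}$ after lifting the function $\phi_t^\theta$). By Stokes and $d\lambda=\omega_0$,
$$\int_{\partial D_t^\theta}\lambda=\int_{H_t}\lambda+\int_{B_t^\theta}\omega_0\le\int_{H_t}\lambda+\int_{\Gamma_t}|\omega_0|,$$
where $\int_{\Gamma_t}|\omega_0|$ denotes the total variation of the $2$-form $\omega_0$ restricted to $\Gamma_t$. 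Both right-hand terms are continuous functions of $t\in[0,1]$, by real analyticity of $F$ and $\omega_0$, and both vanish at $t=0$, where $\Gamma_t$ collapses to the $1$-cycle $p_S(\partial S)$. Hence they are uniformly bounded on $[0,t_0]$ by a single constant $C$, producing the desired uniform area bound.

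\textbf{Main obstacle.} The delicate ingredient is the vanishing of the winding $n_t$: this is not an intrinsic property of the individual torus $\Gamma_t$ but a global consistency statement tying the analytic continuation of the Levi-flat extension on $(0,t_0)$ to its initial classical Riemann-Hilbert configuration near $t=0$. In this scheme the role of condition (iv) of Lemma \ref{family} is to guarantee that $F$ is a genuine real analytic embedding on the whole $\partial S\times\overline{\mathbb D}$, so that the reference integral $\int_{H_t}\lambda$ and the error $\int_{\Gamma_t}|\omega_0|$ both vary continuously in $t$ and stay uniformly bounded up to $t_0$.
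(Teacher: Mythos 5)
Your reduction to bounding $\int_{\partial D_t^\theta}\lambda$ is correct, and the homological observation that the winding number $n_t$ of $\partial D_t^\theta$ in the fibre direction is (or can be normalised to be) zero is also fine --- the paper does essentially the same normalisation, allowing a change of coordinate $\psi \mapsto \psi + \ell\varphi$ to kill the degree. But the Stokes step has a genuine gap. After lifting $h_t^\theta$ to a real-valued function on $\partial S$ (possible because the degree is zero), the ``angular strip'' $B_t^\theta = \{0 \le \psi \le \tilde h_t^\theta(\varphi)\}$, projected to the torus $\Gamma_t$, is a $2$-chain whose multiplicity at a point is roughly $\lfloor |\tilde h_t^\theta(\varphi)|/2\pi \rfloor$. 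Nothing controls the oscillation of $\tilde h_t^\theta$ as $t \to t_0$ --- indeed, uncontrolled oscillation of the curves $\partial D_t^\theta$ is precisely the phenomenon this lemma must confront. So $B_t^\theta$ is not contained in $\Gamma_t$ with multiplicity one, and the inequality $\int_{B_t^\theta}\omega_0 \le \int_{\Gamma_t}|\omega_0|$ fails: the left-hand side can grow without bound while the right-hand side stays fixed.

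This is exactly where condition (iv) of Lemma \ref{family} enters, and its role is quite different from what you suggest. It is not there to make $F$ an embedding. Writing $\lambda\vert_{\partial U_S} = a\,d\varphi + b\,d\psi + c\,dr$ in the coordinates $(\varphi,\psi,r)$ near $\Gamma_{t_0}$, one splits $b = b_0(\varphi,r) + \partial b_1/\partial\psi$ where $b_0$ is the $\psi$-average. By Stokes on a single fibre disc, $b_0(\varphi,r)$ equals the area of $D_t(z)$; condition (iv) says this is independent of $z$, hence $b_0 = b_0(r)$ only. Then pulling $\lambda$ back to the graph $\psi = h_t^\theta(\varphi)$, the dangerous $b\,d\psi$ term contributes (modulo the exact part $db_1$) exactly $b_0(t-t_0)\int_{\mathbb{S}^1} (h_t^\theta)'(\varphi)\,d\varphi = 0$, because $b_0$ is constant along the curve and $h_t^\theta$ has degree zero. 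The remaining $a_0\,d\varphi$ term is trivially bounded by $\sup|a_0|$. In other words, the paper does not try to bound the mass of a filling $2$-chain; it exploits a ``half-lagrangianity'' of $\Gamma_t$ in the $\psi$-direction so that the oscillations contribute literally zero, not merely something bounded. Your argument needs this idea; without it the uniform bound does not follow.
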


\begin{proof}
By the previous remarks, we just have to bound the integrals
$\int_{\partial D_t^\theta}\lambda$. The idea is the following one.
For $t$ fixed the statement is trivial, and we need just to
understand what happens for $t\to t_0$. Look at the curves $\partial
D_t^\theta\subset\Gamma_t$. They are graphs of sections over
$\partial S$. For $t\to t_0$ these graphs could oscillate more and
more. But, using condition (iv) of Lemma \ref{family}, we will see
that these oscillations do not affect the integral of $\lambda$.
This would be evident if the tori $\Gamma_t$ were lagrangian (i.e.
$\omega_0\vert_{\Gamma_t}\equiv 0$, i.e. $\lambda\vert_{\Gamma_t}$
closed), so that the integrals of $\lambda$ would have a
cohomological meaning, not affected by the oscillations. Our
condition (iv) of Lemma \ref{family} expresses a sort of
half-lagrangianity in the direction along which oscillations take
place, and this is sufficient to bound the integrals.

Fix real analytic coordinates $(\varphi ,\psi ,r)\in {\mathbb S}^1
\times{\mathbb S}^1\times (-\varepsilon ,\varepsilon )$ around
$\Gamma_{t_0}$ in $\partial U_S$ such that:
\begin{enumerate}
\item[(i)] $P_S : \partial U_S\to\partial S$ is given by $(\varphi
,\psi ,r)\mapsto \varphi$;
\item[(ii)] $\Gamma_t =\{ r=t-t_0\}$ for every $t$ close to $t_0$.
\end{enumerate}
Each curve $\partial D_t^\theta$, $t < t_0$ close to $t_0$, is
therefore expressed by
$$\partial D_t^\theta = \{ \psi = h_t^\theta (\varphi ),\ r=t-t_0\}$$
for some real analytic function $h_t^\theta :{\mathbb S}^1 \to
{\mathbb S}^1$. Because the discs $D_t^\theta$ form a continuous
family, all these functions $h_t^\theta$ have the same degree, and
we may suppose that it is zero up to changing $\psi$ to $\psi +
\ell\varphi$.

The 1-form $\lambda$, restricted to $\partial U_S$, in these
coordinates is expressed by
$$\lambda = a(\varphi ,\psi ,r)d\varphi + b(\varphi ,\psi ,r)d\psi
+ c(\varphi ,\psi ,r)dr$$ for suitable real analytic functions
$a,b,c$ on ${\mathbb S}^1\times{\mathbb S}^1\times (-\varepsilon
,\varepsilon )$. Setting $b_0(\varphi ,r)= \int_{{\mathbb
S}^1}b(\varphi ,\psi ,r)d\psi$, we can write $b(\varphi ,\psi ,r) =
b_0(\varphi ,r)+ \frac{\partial b_1}{\partial\psi}(\varphi ,\psi
,r)$, for some real analytic function $b_1$ (the indefinite integral
of $b-b_0$ along $\psi$), and therefore
$$\lambda = a_0(\varphi ,\psi ,r)d\varphi + b_0(\varphi ,r)d\psi
+ c_0(\varphi ,\psi ,r)dr + db_1$$ with $a_0 = a - \frac{\partial
b_1}{\partial\varphi}$ and $c_0 = c - \frac{\partial b_1}{\partial
r}$.

Remark now that $b_0(\varphi ,r)$ is just equal to $\int_{\partial
D_t(z)}\lambda$, for $r=t-t_0$ and $\varphi =$ the coordinate of
$z\in\partial S$. By Stokes formula, this is equal to the area of
the disc $D_t(z)$, and by condition (iv) of Lemma \ref{family} this
does not depend on $\varphi$. That is, the function $b_0$ depends
only on $r$, and not on $\varphi$:
$$b_0(\varphi ,r)=b_0(r) .$$

In particular, if we restrict $\lambda$ to a torus $\Gamma_t$ we
obtain, up to an exact term, a 1-form $a_0 (\varphi ,\psi,
t-t_0)d\varphi + b_0 (t-t_0)d\psi$ which is perhaps not closed (this
would be the lagrangianity of $\Gamma_t$), but its component along
$\psi$ is closed. And note that the oscillations of the curves
$\partial D_t^\theta$ are directed along $\psi$.

If we now integrate $\lambda$ along $\partial D_t^\theta$ we obtain
$$\int_{\partial D_t^\theta}\lambda = \int_{{\mathbb S}^1}
a_0(\varphi ,h_t^\theta(\varphi ), t-t_0)d\varphi + b_0(t-t_0)\cdot
\int_{{\mathbb S}^1} \frac{\partial h_t^\theta}{\partial\varphi}
(\varphi )d\varphi .$$ The first integral is bounded by $C= \sup
|a_0|$, and the second integral is equal to zero because the degree
of $h_t^\theta$ is zero.
\end{proof}

Take now any sequence of discs
$$D_n = D_{t_n}^{\theta_n} , \ n\in{\mathbb N},$$
with $t_n\to t_0$. Our next aim is to prove that $\{ D_n\}$
converges (up to subsequencing) to some disc $D_\infty\subset U_S$,
with boundary in $\Gamma_{t_0}$. The limit discs so obtained will be
then glued together to produce the Levi-flat hypersurface $M_{t_0}$.

\subsection{Convergence around the boundary}

We firstly prove that everything is good around the boundary. Recall
that every disc $D_n$ is the graph of a section
$s_n=s_{t_n}^{\theta_n} : S \to U_S$ with boundary values in
$\Gamma_n= \Gamma_{t_n}$.

\begin{lemma}\label{boundary}
There exists a neighbourhood $V\subset S$ of $\partial S$ and a
section
$$s_\infty : V \to U_S$$
such that $s_n\vert_V$ converges uniformly to $s_\infty$ (up to
subsequencing).
\end{lemma}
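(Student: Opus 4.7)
The plan is to extend each section $s_n$ holomorphically across $\partial S$ via Schwarz reflection, obtain a uniformly bounded family on a fixed neighbourhood of $\partial S$, and then extract a convergent subsequence by a normal families argument.

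First, I would work in a local holomorphic chart. Near any point of the real analytic torus $\Gamma_{t_0}\subset\partial U_S$, because $\Gamma_{t_0}$ is transverse to the fibres of $P_S$ inside the real analytic Levi-flat hypersurface $\partial U_S$, one can choose holomorphic coordinates $(u,v)$ on a neighbourhood $\mathcal{N}$ of the chosen point such that $P_S(u,v)=u$, the arc of $\partial S$ corresponds to $\{|u|=1\}$, and $\partial U_S\cap\mathcal{N}=\{|u|=1\}$. In these coordinates each section $s_n$, restricted to a one-sided neighbourhood of this arc, is a graph $u\mapsto(u,f_n(u))$ of a holomorphic function $f_n$, and its boundary values on $\{|u|=1\}$ lie on the real analytic circle $\gamma_n(u)=\Gamma_n\cap P_S^{-1}(u)$. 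By the explicit real analytic construction in Lemma \ref{family}, the family $\{\Gamma_t\}$ varies real-analytically in $t$, so $\gamma_n\to\gamma_{t_0}$ in the $C^\omega$-topology as $t_n\to t_0$.

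Second, I would apply the Schwarz reflection principle fibrewise, using the real analytic dependence on the base parameter $u$. Because each $\gamma_n(u)$ is a real analytic curve depending holomorphically on $u$, each $f_n$ extends holomorphically across $\{|u|=1\}$ to a reflected collar $\tilde V$. The real analytic convergence $\gamma_n\to\gamma_{t_0}$ forces a uniform lower bound on the radius of convergence of the defining power series, so the extensions $\tilde f_n$ are all defined on a common neighbourhood $V'$ of $\partial S$ and, on $V'$, take values in a fixed compact subset of $\mathcal{N}$ (namely a thickening of $\gamma_{t_0}\cup R(\gamma_{t_0})$, where $R$ denotes the Schwarz reflection across $\gamma_{t_0}$). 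Patching such local charts along a finite cover of the compact torus $\Gamma_{t_0}$ gives a uniformly bounded extension of $s_n$ on a fixed neighbourhood of $\partial S$.

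Third, by Montel's theorem the bounded holomorphic family $\{\tilde f_n\}$ is normal, so a subsequence converges uniformly on some $V\Subset V'$ to a holomorphic function $f_\infty$. The corresponding graphs give the desired section $s_\infty$ on $V$; passing to the limit preserves the property of being a section of $P_S$, since this property is closed under uniform convergence.

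The main obstacle is the second step: one must ensure that the Schwarz reflection produces extensions defined on a common neighbourhood with a common bound, uniformly in $n$. This is precisely where the real analyticity built into the family $\{\Gamma_t\}$ by Lemma \ref{family} is essential — it provides a uniform radius of holomorphic extension for the reflection maps $R_n$ across $\gamma_n$, allowing the passage from the $n$-dependent reflection of $f_n$ to a normal family argument on a fixed domain.
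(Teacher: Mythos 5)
Your proof shares the Schwarz-reflection idea with the paper, but the key step is not justified, and the objects you would need to justify it are absent from your argument.

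The gap lies in your second step, where you assert that the reflected functions $\tilde f_n$ ``are all defined on a common neighbourhood $V'$ of $\partial S$ and, on $V'$, take values in a fixed compact subset of $\mathcal{N}$.'' The real analyticity of the family $\{\Gamma_t\}$ does give a uniform neighbourhood on which the reflection $R_n$ across $\gamma_n$ is defined (that is the content of the paper's remark that the reflection neighbourhoods $W_t$ are uniformly large). But the extension formula is of the form $\tilde f_n(1/\bar u) = R_n(f_n(u))$, so the domain and range of $\tilde f_n$ depend on how quickly $f_n$ itself escapes the tubular neighbourhood of $\gamma_n$ as $u$ moves inward from $\partial S$. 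Nothing forces the collar on which $f_n$ stays close to $\gamma_n$ to have width bounded below in $n$: the sections $s_n$ could a priori develop a ``boundary bubble,'' i.e.\ escape to a distant part of a boundary fiber $P_S^{-1}(q)$, $q\in\partial S$, while still having boundary values on $\Gamma_n$. In that case Montel's theorem cannot be applied on any fixed neighbourhood of $\partial S$, and the normal-families conclusion fails. Establishing the uniform bound you need is precisely the nontrivial content of the lemma.

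The paper's proof is structured to sidestep this. It uses the area bound of Lemma~\ref{areabound} together with the Schwarz-reflected doubles to apply Bishop's compactness theorem for analytic subsets (not graphs), which tolerates bubbling: the Hausdorff limit is a graphical piece $D_\infty^0$ together with interior bubbles $E_j$ and boundary bubbles $F_j$, the latter being components of $P_S^{-1}(q_j)\setminus\Gamma_{t_0}(q_j)$ attached at points $q_j\in\partial S$. The heart of the matter is then ruling out boundary bubbles, which is done with two ingredients you do not invoke: the Levi-flat hypersurfaces $M_t$, $t<t_0$, act as barriers preventing accumulation on the bounded component of $P_S^{-1}(q)\setminus\Gamma_{t_0}(q)$; and the infinite-area hypothesis (b) from the beginning of Section~5 rules out accumulation on the unbounded component. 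Without these, or a substitute a priori estimate, the ``fixed compact subset of $\mathcal{N}$'' you rely on is unavailable, and the Montel argument does not close.
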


\begin{proof}
We want to apply Bishop compactness theorem \cite{Bis} \cite{Chi} to
the sequence of analytic subsets of bounded area $D_n\subset U_S$.
This requires some care due to the boundary.

Let us work on some slightly larger open disc $S'\subset T$
containing the closed disc $S$. Every torus $\Gamma_t\subset U_{S'}$
has a neighbourhood $W_t\subset U_{S'}$ over which we have a well
defined Schwarz reflection with respect to $\Gamma_t$ (which is
totally real and of half dimension in $U_{S'}$). Thus, the complex
curve $D_t^\theta\cap W_t$ with boundary in $\Gamma_t$ can be
doubled to a complex curve without boundary $A_t^\theta$, properly
embedded in $W_t$. Moreover, using the fact that the tori $\Gamma_t$
form a real analytic family up to $t_0$, we see that the size of the
neighbourhoods $W_t$ is uniformly bounded from below. That is, there
exists a neighbourhood $W$ of $\Gamma_{t_0}$ in $U_{S'}$ which is
contained in every $W_t$, for $t$ sufficiently close to $t_0$, and
therefore every $A_t^\theta$ restricts to a properly embedded
complex curve in $W$, still denoted by $A_t^\theta$. Set
$$\widehat D_t^\theta = D_t^\theta\cup A_t^\theta .$$
Because the Schwarz reflection respects the fibration of $U_{S'}$,
it is clear that $\widehat D_t^\theta$ is still the graph of a
section $\widehat s_t^\theta$, defined over some open subset
$R_t^\theta\subset S'$ which contains $S$. The area of $A_t^\theta$
is roughly the double of the area of $D_t^\theta\cap W$, and
therefore the analytic subsets $\widehat D_t^\theta\subset U_S\cup
W$ also have uniformly bounded areas.

Having in mind this uniform extension of the discs $D_t^\theta$ into
the neighbourhood $W$ of $\Gamma_{t_0}$, we now apply Bishop Theorem
to the sequence $\{ D_n\}$. Remark that $\partial D_n\subset
\Gamma_n$ cannot exit from $W$, as $n\to +\infty$, because
$\Gamma_n$ converges to $\Gamma_{t_0}$. Up to subsequencing, we
obtain that $D_n$ Hausdorff-converges to a complex curve
$D_\infty\subset U_S$ with boundary in $\Gamma_{t_0}$. Moreover, and
taking into account that $D_n$ are graphs over $S$ (compare with
\cite[Prop. 3.1]{Iv1}):
\begin{enumerate}
\item[(i)] $D_\infty = D_\infty^0\cup E_1\cup\ldots\cup E_m \cup
F_1\cup\ldots\cup F_\ell$;
\item[(ii)] $D_\infty^0$ is the graph of a section $s_\infty : V\to
U_S$, over some open subset $V\subset S$ which contains $\partial
S$;
\item[(iii)] each $E_j$ is equal to $P_S^{-1}(p_j)$, for some
$p_j\in S\setminus\partial S$ (interior bubble);
\item[(iv)] each $F_j$ is equal to the closure of a connected
component of $P_S^{-1}(q_j)\setminus\Gamma_{t_0}(q_j)$, for some
$q_j\in \partial S$ (boundary bubble);
\item[(v)] for every compact $K\subset V\setminus \{ p_1,\ldots
,p_m,q_1,\ldots ,q_\ell\}$, $s_n\vert_K$ converges uniformly to
$s_\infty\vert_K$, as $n\to +\infty$.
\end{enumerate}
We have just to prove that there are no boundary bubbles, i.e. that
the set $\{ q_1,\ldots ,q_\ell\}$ is in fact empty.

Consider the family of Levi-flat hypersurfaces $M_t\subset U_S$ with
boundary $\Gamma_t$, for $t < t_0$. Each $M_t$ is a ``lower
barrier'', which prevents the approaching of $D_n$ to the bounded
component of $P_S^{-1}(q)\setminus\Gamma_{t_0}(q)$, for every
$q\in\partial S$. More precisely, for any compact $R$ in that
bounded component we may select $t_1 < t_0$ such that $\cup_{0\le t
< t_1}\Gamma_t$ contains $R$, and so $\cup_{0\le t < t_1}M_t$ is a
neighbourhood of $R$ in $U_S$. For $n$ sufficiently large (so that
$t_n > t_1$), $D_n\subset M_{t_n}$ is disjoint from that
neighbourhood of $R$. Hence the sequence $D_n$ cannot accumulate to
the bounded component of $P_S^{-1}(q)\setminus\Gamma_{t_0}(q)$.

But neither $D_n$ can accumulate to the unbounded component of
$P_S^{-1}(q)\setminus\Gamma_{t_0}(q)$, because that component has
infinite area, by our standing assumptions. Therefore, as desired,
$$\{ q_1,\ldots ,q_\ell\}=\emptyset .$$
Remark that by the same barrier argument we have also
$$\{ p_1,\ldots ,p_m\}=\emptyset $$
but this will not be used below.
\end{proof}

The proof above shows in fact the following: there is a maximal $V$
over which $s_\infty$ is defined, and for every $z\not\in V$ the
sequence $s_n(z)$ is divergent in the fiber $P_S^{-1}(z)$.

\subsection{Convergence on the interior}

In order to extend the convergence above from $V$ to the full $S$,
we need to use the map $\Pi_S$ into $X$. Consider the discs
$$f_n = \Pi_S\circ s_n : S \rightarrow X$$
in the compact K\"ahler manifold $X$. They have bounded area, and,
once a time, we apply to them Bishop compactness theorem \cite{Bis}
\cite[Prop.3.1]{Iv1}. We obtain a holomorphic map
$$f_\infty : S\cup B \rightarrow X$$
which obviously coincides with $\Pi_S\circ s_\infty$ on the
neighbourhood $V$ of $\partial S$ of Lemma \ref{boundary}. The set
$B$ is a union of trees of rational curves, each one attached to
some point of $S$ outside $V$. We will prove that $f_\infty\vert_S$
can be lifted to $U_S$, providing the extension of the section
$s_\infty$ to the full $S$.

The map $f_\infty$ is an immersion around $\partial S$. Let us even
suppose that it is an embedding (anyway, this is true up to moving a
little $\partial S$ inside $S$, and this does not affect the
following reasoning). In some sufficiently smooth tubular
neighbourhood $X_0\subset X$ of $f_\infty (\partial S)$, we have a
properly embedded complex surface with boundary $Y$, given by the
image by $\Pi_S$ of a neighbourhood of $s_\infty (\partial S)$ in
$U_S$. The boundary $\partial Y$ of $Y$ in $X_0$ is filled by the
images by $\Pi_S$ of part of the tori $\Gamma_t$, $t$ close to
$t_0$; denote them by $\Gamma_t'$ (with a good choice of $X_0$, each
$\Gamma_t'$ is a real annulus). Thus $f_n$, $n$ large, sends $S$ to
a disc in $X$ whose (embedded) boundary is contained in $\Gamma_n' =
\Gamma_{t_n}'$, and $f_\infty$ sends $S\cup B$ to a disc with
rational bubbles in $X$ whose (embedded) boundary is contained in
$\Gamma_\infty ' =\Gamma_{t_0}'$. Inspired by \cite{IvS}, but
avoiding any infinite dimensional tool due to our special context,
we now prove that $f_\infty$ and $f_n$, for some large $n$, can be
holomorphically interpolated by discs with boundaries in $\partial
Y$.

\begin{lemma} \label{interpolation}
There exists a complex surface with boundary $W$, a proper map $\pi
: W\to {\mathbb D}$, a holomorphic map $g:W\to X$, such that:
\begin{enumerate}
\item[(i)] for every $w\not= 0$, the fiber $W_w = \pi^{-1}(w)$ is
isomorphic to $S$, and $g$ sends that fiber to a disc in $X$ with
boundary in $\partial Y$;
\item[(ii)] for some $e\not=0$, $g$ coincides on $W_e =\pi^{-1}(e)$
with $f_n$, for some $n$ (large);
\item[(iii)] $W_0 = \pi^{-1}(0)$ is
isomorphic to $S\cup B$, and $g$ on that fiber coincides with
$f_\infty$.
\end{enumerate}
\end{lemma}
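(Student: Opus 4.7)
The strategy is to convert the disc interpolation problem into a closed cycle interpolation problem via Schwarz reflection across the totally real, real-analytic submanifold $\partial Y$, and then to invoke Barlet's theorem on the analytic structure of cycle spaces as a finite-dimensional substitute for the infinite-dimensional disc moduli of \cite{IvS}.

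First, I would exploit the real-analyticity of $\omega$ and of the family $\{\Gamma_t'\}$ to view $\partial Y$ as a totally real, real-analytic submanifold of half dimension in some tubular neighborhood $N\subset X$, and define Schwarz reflection $\sigma:N\to N$ as an antiholomorphic involution with fixed locus $\partial Y$. Gluing $X$ to another copy of itself across $\sigma$ produces a compact complex manifold $\hat X$ containing $X$ as one closed ``half''. Since $\omega$ is real-analytic it extends by reflection, so $\hat\omega=\omega+\sigma^*\omega$ patches to a K\"ahler form on $\hat X$. Each disc $f_n:S\to X$ with boundary in $\partial Y$ doubles to a closed compact complex curve $\hat C_n\subset \hat X$ with $\mathrm{area}(\hat C_n)\le 2C$ (by Lemma \ref{areabound}), and $f_\infty$ doubles to a closed curve $\hat C_\infty$ still containing the interior bubble tree $B$. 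Bishop compactness gives $\hat C_n\to\hat C_\infty$ as cycles in $\hat X$.

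Second, the points $[\hat C_n],[\hat C_\infty]$ lie in the Barlet cycle space $\mathcal{B}(\hat X)$, a finite-dimensional complex analytic space. Let $\mathcal{B}_0$ be an irreducible component containing $[\hat C_\infty]$; since the distinct points $[\hat C_n]$ accumulate to $[\hat C_\infty]$, infinitely many of them lie in $\mathcal{B}_0$ and $\mathcal{B}_0$ is positive-dimensional. Slicing $\mathcal{B}_0$ in a local embedding by generic hyperplanes through $[\hat C_\infty]$ and adjusting them to meet the sequence $\{[\hat C_n]\}$ (possible because any neighborhood of $[\hat C_\infty]$ contains infinitely many of these), I extract a $1$-dimensional irreducible analytic subvariety $\Sigma\subset \mathcal{B}_0$ containing $[\hat C_\infty]$ and some $[\hat C_n]$; replacing $\Sigma$ by an irreducible component of $\Sigma\cup\sigma_*(\Sigma)$ through $[\hat C_\infty]$, I may assume $\Sigma$ is $\sigma_*$-invariant. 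Normalizing yields $\mathbb{D}\to\Sigma$ with $0\mapsto[\hat C_\infty]$ and $e\mapsto[\hat C_n]$. Pulling back the universal family over $\mathcal{B}_0$ produces a compact complex surface $\hat W$ with a proper fibration $\hat\pi:\hat W\to\mathbb{D}$ and a holomorphic map $\hat g:\hat W\to \hat X$ realizing each cycle fiberwise; by construction $\sigma$ lifts to an antiholomorphic involution $\hat\sigma$ of $\hat W$ covering complex conjugation on $\mathbb{D}$. Its fixed locus $\hat W^{\hat\sigma}$ is a real-analytic hypersurface mapped by $\hat g$ into $\partial Y$; cutting $\hat W$ along $\hat W^{\hat\sigma}$ and keeping the half lying above the appropriate semidisc in $\mathbb{D}$ gives the required $W$, $\pi=\hat\pi|_W$, $g=\hat g|_W:W\to X$, with generic fiber a disc in $X$ with boundary in $\partial Y$, and with $W_0=S\cup B$, $W_e=f_n(S)$.

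The main obstacle is the last step: ensuring that the $\hat\sigma$-cut produces a fibration by discs along the entire $\mathbb{D}$, not just near $0$ and $e$. A priori, a generic fiber of $\hat\pi$ could be $\hat\sigma$-paired with a different fiber (rather than invariant), or an invariant fiber could split into more than two topological pieces, or $\hat W^{\hat\sigma}$ could acquire singularities transverse to $\hat\pi$. These are handled as follows: $\sigma_*$-invariance of $\Sigma$ forces $\hat\sigma$ to preserve each fiber of $\hat\pi$ lying over the real locus of $\mathbb{D}$; transversality of $\hat g^{-1}(\partial Y)$ to the fibers of $\hat\pi$ holds near $\partial\hat W$ by the Schwarz-doubling construction and propagates along each fiber because $\partial Y$ is totally real of half dimension in $X$; and the topological splitting into two discs is forced by the analogous splitting of the nearby smooth fiber $\hat\pi^{-1}(e)=\hat C_n$ exchanged by $\hat\sigma$, combined with the upper semicontinuity of the number of connected components of the complement. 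Granted these points, the lemma follows at once.
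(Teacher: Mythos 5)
The key step in your argument fails at the very first move. You propose to define a Schwarz reflection $\sigma$ with fixed locus $\partial Y$, and then to double $X$ across it. But $\partial Y$ is the boundary of the complex surface $Y$ inside $X$, hence a real $3$-dimensional CR hypersurface of $Y$: at each point, $T_p\partial Y$ contains a complex line of $T_pY$, so $\partial Y$ is \emph{not} totally real. Moreover it has real dimension $3$, so it is half-dimensional in $X$ only when $\dim_{\mathbb C}X=3$, and in any case it does not separate $X$ (it has real codimension $2n-3\ge 1$), so there is no sense in which one can ``glue $X$ to another copy of itself across $\sigma$'' to obtain a compact manifold. No antiholomorphic involution can have $\partial Y$ as its fixed locus, so the doubled manifold $\hat X$ and the closed cycles $\hat C_n$, $\hat C_\infty$ do not exist. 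This is precisely the difficulty that the paper's proof is designed to circumvent: it enlarges the ambient space to $\widehat X = X\times\mathbb D(t_0,\varepsilon)$, lifts each $f_n(S)$ to the level $X\times\{t_n\}$, and replaces $\partial Y$ by the $3$-dimensional manifold $\Gamma' = \bigcup_t \Gamma_t'\times\{t\}$ inside $\widehat X$. Because the extra complex coordinate $t$ is restricted to real values and each $\Gamma_t'$ is totally real in $X$, the manifold $\Gamma'$ \emph{is} totally real in $\widehat X$, and one can then write an explicit coordinate Schwarz reflection whose fixed locus contains $\Gamma'$ and which preserves $\widehat Y = Y\times\mathbb D(t_0,\varepsilon)$. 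Only a neighbourhood $Z_0$ of $f_\infty(S\cup B)$ (not all of $X$) is doubled.

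A second, independent problem is your $\sigma_*$-invariance device. You claim that replacing $\Sigma$ by ``an irreducible component of $\Sigma\cup\sigma_*(\Sigma)$ through $[\hat C_\infty]$'' gives a $\sigma_*$-invariant curve. If $\Sigma\ne\sigma_*(\Sigma)$, then the irreducible components of $\Sigma\cup\sigma_*(\Sigma)$ through $[\hat C_\infty]$ are exactly $\Sigma$ and $\sigma_*(\Sigma)$; neither is $\sigma_*$-invariant, so nothing has been gained. The paper sidesteps this too: instead of post-hoc symmetrization, it restricts a priori to the analytic subspace $\mathcal R_0\subset\mathcal R$ of trees of rational curves which, near the median circle $f_\infty(\partial S)$, are contained in the complexification $\widetilde Y$ of $\Gamma'$. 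Every member of that family automatically splits along $\Gamma'$ into two pieces, each projecting to a disc with boundary in $\partial Y$. Your paragraph of ``obstacles'' addresses some real transversality and splitting issues, but does not touch the two gaps above, which are fatal to the approach as written.
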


\begin{proof}
Let us work on the complex manifold $\widehat X = X\times{\mathbb
D}(t_0,\varepsilon )$, where the second factor is a small disc in
${\mathbb C}$ centered at $t_0$. The real surfaces $\Gamma_t'$ in
$X_0$ can be seen as a single real analytic submanifold of dimension
three $\Gamma '$ in $\widehat X_0 = X_0\times{\mathbb
D}(t_0,\varepsilon )$, by considering $\Gamma_t'$ as a subset of
$X_0\times\{ t\}$. Remark that $\Gamma '$ is totally real.
Similarly, the discs $f_n(S)$ can be seen as discs in $X\times\{
t_n\}\subset\widehat X$, and the disc with bubbles $f_\infty (S\cup
B)$ can be seen as a disc with bubbles in $X\times\{
t_0\}\subset\widehat X$; all these discs have boundaries in $\Gamma
'$. In $\widehat X_0$ we also have a complex submanifold of
dimension three with boundary $\widehat{Y} = Y\times{\mathbb
D}(t_0,\varepsilon )$, which is ``half'' of the complexification of
$\Gamma '$.

Around the circle $f_\infty (\partial S)\subset \widehat X$, we may
find holomorphic coordinates $z_1,\ldots ,z_{n+1}$, with $| z_j | <
\delta$ for $j\le n$, $1-\delta < | z_{n+1}| < 1+\delta$, such that:
\begin{enumerate}
\item[(i)] $f_\infty (\partial S) = \{ z_1=\ldots =z_n =0,\ |
z_{n+1}| = 1 \}$;
\item[(ii)] $\Gamma ' = \{ z_1 =\ldots =z_{n-2}=0,\ Im\ z_{n-1} = Im\
z_n = 0,\ |z_{n+1}| = 1\}$;
\item[(iii)] $\widehat{Y} = \{ z_1 =\ldots =z_{n-2}=0,\ |z_{n+1}| \le
1\}$.
\end{enumerate}

We consider, in these cordinates, the Schwarz reflection
$(z_1,\ldots ,z_n,z_{n+1})\mapsto (\bar z_1,\ldots \bar z_n,
\frac{1}{\bar z_{n+1}})$. It is a antiholomorphic involution, which
fixes in particular every point of $\Gamma '$. Using it, we may
double a neighbourhood $Z_0$ of $f_\infty (S\cup B)$ in $\widehat
X$: we take $Z_0$ and $\overline Z_0$ (i.e., $Z_0$ with the opposite
complex structure), and we glue them together using the Schwarz
reflection. Call $Z$ this double of $Z_0$. Then $Z$ naturally
contains a tree of rational curves $R_\infty$ which comes from
doubling $f_\infty (S\cup B)$, because this last has boundary in the
fixed point set of the Schwarz reflection. Similarly, each $f_n(S)$
doubles to a rational curve $R_n\subset Z$, close to $R_\infty$ for
$n$ large. Moreover, in some neighbourhood $N\subset Z$ of the
median circle of $R_\infty$ (arising from $f_\infty (\partial S)$),
we have a complex threefold $\widetilde Y\subset N$, arising by
doubling $\widehat Y$ or by complexifying $\Gamma '$, which contains
$R_\infty\cap N$ and every $R_n\cap N$.

Now, the space of trees of rational curves in $Z$ close to
$R_\infty$ has a natural structure of complex analytic space
${\mathcal R}$, see e.g. \cite{CaP} or \cite{IvS}. Those trees
which, in $N$, are contained in $\widetilde Y$ form a complex
analytic subspace ${\mathcal R}_0\subset{\mathcal R}$. The curve
$R_n$ above correspond to points of ${\mathcal R}_0$ converging to a
point corresponding to $R_\infty$. Therefore we can find a disc in
${\mathcal R}_0$ centered at $R_\infty$ and passing through some
$R_n$. This gives a holomorphic family of trees of rational curves
in $Z$ interpolating $R_\infty$ and $R_n$. Restricting to
$Z_0\subset Z$ and projecting to $X$, we obtain the desired family
of discs $g$.
\end{proof}

Note the the doubling trick used in the previous lemma is not so far
from the similar trick used in the proof of Lemma \ref{koebe}. In
both cases, a problem concerning discs is reduced to a more
tractable problem concerning rational curves.

The map $g$ can be lifted to $U_S$ around $\partial W$ and around
$W_e$. In this way, and up to a reparametrization, we obtain an
embedding
$$h : H \rightarrow U_S,$$
where $H =\{ (z,w)\in S\times{\mathbb D}\ \vert\ z\in V \ {\rm or}\
|w-e| < \varepsilon\}$ (for some $\varepsilon >0$ small), such that:
\begin{enumerate}
\item[(i)] $h(\cdot ,w)$ is a section of $U_S$ over $S$ (if $|w-e|
< \varepsilon$) or over $V$ (if $|w-e| \ge \varepsilon$);
\item[(ii)] $h(z,e)=s_n(z)$ for every $z\in S$;
\item[(iii)] $h(z,0)=s_\infty (z)$ for every $z\in V$.
\end{enumerate}
(note, however, that generally speaking the section $h(\cdot ,w)$
has not boundary values in some torus $\Gamma_t$, when $w\not=
0,e$). In some sense, we are in a situation similar to the one
already encountered in the construction of covering tubes in Section
4, but rotated by 90 degrees.

\begin{figure}[h]
\includegraphics[width=8cm,height=6cm]{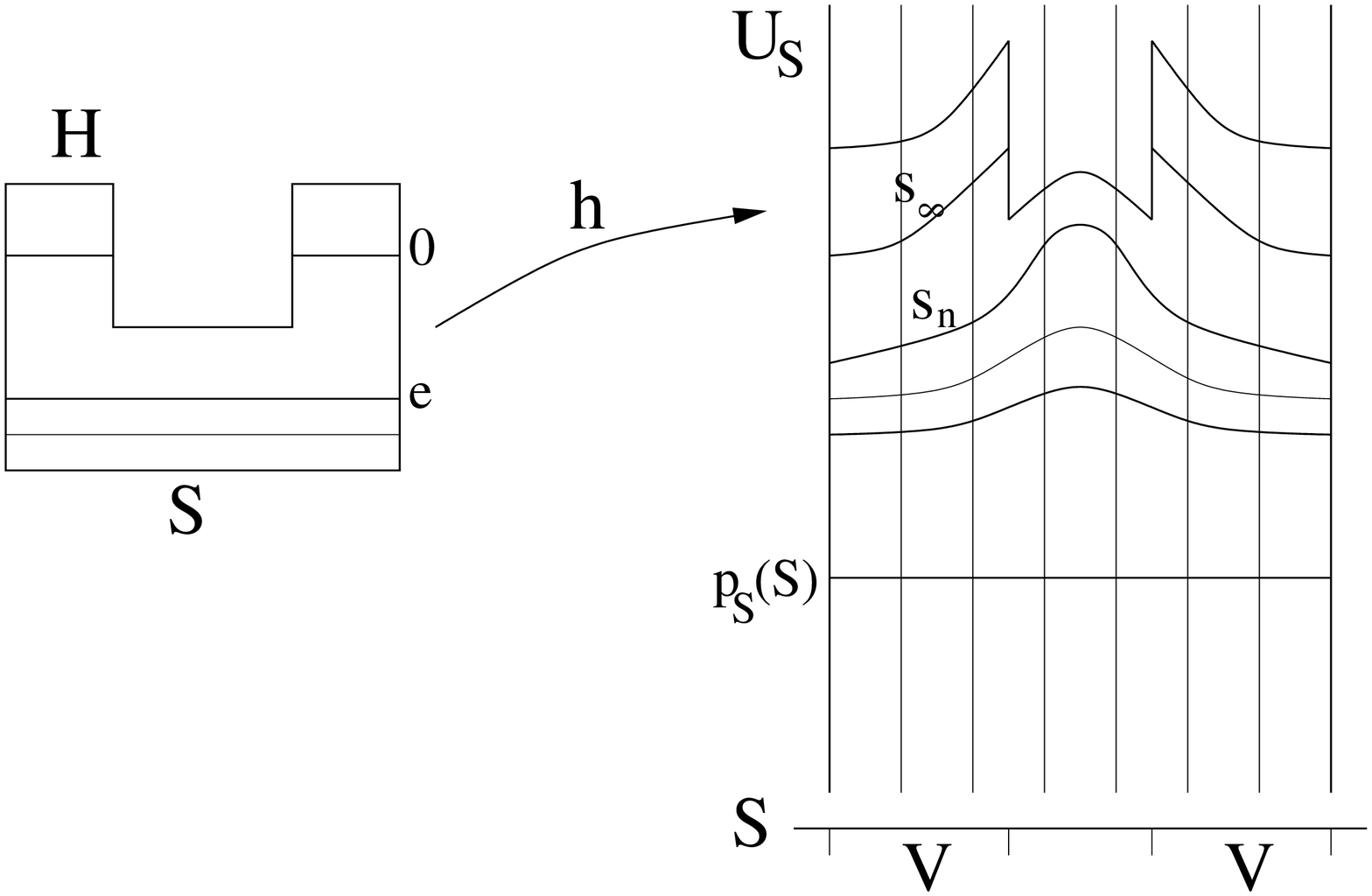}
\end{figure}

Consider now the meromorphic immersion $\Pi_S \circ h :
H\dashrightarrow X$. By \cite{Iv1}, this map can be meromorphically
extended to the envelope $S\times {\mathbb D}$, and clearly this
extension is still a meromorphic immersion. Each vertical fiber $\{
z\}\times{\mathbb D}$ is sent to a disc tangent to the foliation
${\mathcal F}$, and possibly passing through $Sing({\mathcal F})$.
But for every $z\in V$ we already have, by construction, that such a
disc can be lifted to $U_S$. By our definition and construction of
$U_S$, it then follows that the same holds for every $z\in S$: every
intersection point with $Sing({\mathcal F})$ is a vanishing end.
Hence the full family $S\times{\mathbb D}$ can be lifted to $U_S$,
or in other words the embedding $h: H\to U_S$ can be extended to
$\widehat h : S\times{\mathbb D}\to U_S$.

Take now $\widehat h (,\cdot ,0)$: it is a section over $S$ which
extends $s_\infty$. Thus, the section $s_\infty$ from Lemma
\ref{boundary} can be extended from $V$ to $S$, and the sequence of
discs $D_n\subset U_S$ uniformly converges to $D_\infty = s_\infty
(S)$.

\subsection{Construction of the limit Levi-flat hypersurface}

Let us resume. We are assuming that our Levi-flat extension exists
over $S\times{\mathbb D}(t_0)$, providing an embedded real analytic
family of Levi-flat hypersurfaces $M_t\subset U_S$ with boundaries
$\Gamma_t$, $t < t_0$. Given any sequence of holomorphic discs
$D_{t_n}^{\theta_n}\subset M_{t_n}$, $t_n\to t_0$, we have proved
that (up to subsequencing) $D_{t_n}^{\theta_n}$ converges uniformly
to some disc $D_\infty$ with $\partial D_\infty\subset
\Gamma_{t_0}$. Given any point $p\in\Gamma_{t_0}$, we may choose the
sequence $D_{t_n}^{\theta_n}$ so that $\partial D_\infty$ will
contain $p$. It remains to check that all the discs so constructed
glue together in a real analytic way, giving $M_{t_0}$, and that
this $M_{t_0}$ glues to $M_t$, $t < t_0$, in a real analytic way,
giving the Levi-flat extension over $S\times\overline{{\mathbb
D}(t_0)}$.

This can be seen using a Lemma from \cite[\S 5]{BeG}. It says that
if $D$ is an embedded disc in a complex surface $Y$ with boundary in
a real analytic totally real surface $\Gamma\subset Y$, and if the
winding number (Maslov index) of $\Gamma$ along $\partial D$ is
zero, then $D$ belongs to a unique embedded real analytic family of
discs $D^\varepsilon$, $\varepsilon\in
(-\varepsilon_0,\varepsilon_0)$, $D^0=D$, with boundaries in
$\Gamma$ (incidentally, in our real analytic context this can be
easily proved by the doubling argument used in Lemma
\ref{interpolation}, which reduces the statement to the well known
fact that a smooth rational curve of zero selfintersection belongs
to a unique local fibration by smooth rational curves). Moreover, if
$\Gamma$ is moved in a real analytic way, then the family
$D^\varepsilon$ also moves in a real analytic way.

For our discs $D_t^\theta\subset M_t$, $t < t_0$, the winding number
of $\Gamma_t$ along $\partial D_t^\theta$ is zero. By continuity of
this index, if $D_\infty$ is a limit disc then the winding number of
$\Gamma_{t_0}$ along $\partial D_\infty$ is also zero. Thus,
$D_\infty$ belongs to a unique embedded real analytic family
$D_\infty^\varepsilon$, with $\partial D_\infty^\varepsilon\subset
\Gamma_{t_0}$. This family can be deformed, real analytically, to a
family $D_t^\varepsilon$ with $\partial D_t^\varepsilon\subset
\Gamma_t$, for every $t$ close to $t_0$. When $t=t_n$, such a family
$D_{t_n}^\varepsilon$ necessarily contains $D_{t_n}^{\theta_n}$, and
thus coincides with $D_{t_n}^\theta$ for $\theta$ in a suitable
interval around $\theta_n$. Hence, for every $t < t_0$ the family
$D_t^\varepsilon$ coincides with $D_t^\theta$, for $\theta$ in a
suitable interval.

In this way, for every limit disc $D_\infty$ we have constructed a
piece
$$\bigcup_{\varepsilon\in
(-\varepsilon_0,\varepsilon_0)}D_\infty^\varepsilon$$ of our limit
$M_{t_0}$, this piece is real analytic and glues to $M_t$, $t <
t_0$, in a real analytic way. Because each $p\in\Gamma_{t_0}$
belongs to some limit disc $D_\infty$, we have completed our
construction of $M_{t_0}$, and the proof of Theorem \ref{convexity}.

\section{Hyperbolic foliations}

We can now draw the first consequences of the convexity of covering
tubes given by Theorem \ref{convexity}, still following \cite{Br2}
and \cite{Br3}.

As in the previous Section, let $X$ be a compact K\"ahler manifold
of dimension $n$, equipped with a foliation by curves ${\mathcal F}$
which is not a rational quasi-fibration. Let $T\subset X^0$ be local
transversal to ${\mathcal F}^0$. We firstly need to discuss the
pertinence of hypotheses (a) and (b) that we made at the beginning
of Section 5.

Concerning (a), let us simply observe that $Indet(\Pi_T)$ is an
analytic subset of codimension at least two in $U_T$, and therefore
its projection to $T$ by $P_T$ is a countable union of locally
analytic subsets of positive codimension in $T$ (a thin subset of
$T$). Hypothesis (a) means that the closed disc $S\subset T$ is
chosen so that it is not contained in that projection, and its
boundary $\partial S$ is disjoint from that projection.

Concerning (b), let us set
$$R = \{ z\in T\ \vert\ {\rm area}(P_T^{-1}(z)) < +\infty\} .$$

\begin{lemma} \label{areafibers}
Either $R$ is a countable union of analytic subsets of $T$ of
positive codimension, or $R=T$. In this second case, $U_T$ is
isomorphic to $T\times{\mathbb C}$.
\end{lemma}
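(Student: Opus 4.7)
The plan is to stratify $R$ by area bounds, show each stratum is an analytic subset of $T$ via the Barlet space of compact cycles, and in the case where one stratum equals $T$ invoke Theorem~\ref{nishino} to obtain the product.

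First I would show that $\mathbf{a}(z) < +\infty$ forces $\widetilde{L_z} \simeq \mathbb{C}$ and that the image $\Pi_T(\widetilde{L_z})$ has compact analytic closure in $X$. If instead $\widetilde{L_z} = \mathbb{D}$, the orbifold covering $\widetilde{L_z} \to L_z$ is of infinite degree (any hyperbolic orbifold has infinite fundamental group); since $\Pi_T$ is a local immersion on the complement of the discrete indeterminacy set, the pulled-back area $\int_{\widetilde{L_z}} \Pi_T^{\ast} \omega$ counts the image with infinite multiplicity, forcing $\mathbf{a}(z) = +\infty$. So $\widetilde{L_z} \simeq \mathbb{C}$. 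Moreover, by a Bishop--Stoll type extension principle, a parabolic leaf of finite pulled-back area in a compact K\"ahler manifold has image with compact analytic closure $\overline{L_z} \subset X$, a compact orbifold curve tangent to $\mathcal{F}$; since $\mathcal{F}$ is not a rational quasi-fibration, $\overline{L_z}$ is a parabolic orbifold (a smooth elliptic curve, or one of the Euclidean $\mathbb{P}$-orbifolds of type $(2,2,2,2), (3,3,3), (2,4,4), (2,3,6)$).

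For each $m \in \mathbb{N}$ set $R_m = \{z \in T : \mathbf{a}(z) \le m\}$. Compact analytic $1$-cycles in $X$ of mass at most $m$ are parametrized by a compact analytic space $\mathcal{B}_m$ (Barlet space). The condition of being tangent to $\mathcal{F}$ defines a closed analytic subspace $\mathcal{C}_m \subset \mathcal{B}_m$, still compact. The incidence variety $\mathcal{I}_m = \{([C],z) \in \mathcal{C}_m \times T : z \in |C|\}$ is analytic, the projection $\mathcal{I}_m \to T$ is proper, and its image is precisely $R_m$ by the previous paragraph. Hence $R_m$ is an analytic subset of $T$. If each $R_m$ is a proper subset, it has positive codimension, and $R = \bigcup_m R_m$ is the required countable union.

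Otherwise $R_{m_0} = T$ for some $m_0$: all fibers of $U_T$ are isomorphic to $\mathbb{C}$ and all leaves close to compact parabolic orbifold curves of bounded area, so $\mathcal{F}$ is an ``elliptic quasi-fibration''. To conclude $U_T \simeq T \times \mathbb{C}$, I would apply Theorem~\ref{nishino} over polydiscs in $T$. Hypothesis~(i) is immediate. For hypothesis~(ii), let $j : T \times \mathbb{D} \hookrightarrow U_T$ be a flow-box embedding along the section $p_T$; for each $\varepsilon > 0$, $U_\varepsilon = U_T \setminus j(T \times \overline{\mathbb{D}(\varepsilon)})$ has fibers obtained by removing a small disc from $\mathbb{C}$, hence hyperbolic. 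An elliptic analogue of Proposition~\ref{reeb} would show that, locally near $T$, the elliptic quasi-fibration is a genuine smooth elliptic fibration of a neighbourhood in $X$, so $U_\varepsilon$ is a Stein elliptic fibration minus a disc section, and Theorem~\ref{kizuka} gives the plurisubharmonic variation of the fiberwise Poincar\'e metric. Theorem~\ref{nishino} then yields $U_T \simeq T \times \mathbb{C}$.

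The main obstacle is the elliptic analogue of Proposition~\ref{reeb}: verifying that an elliptic quasi-fibration is actually a meromorphic elliptic fibration, so that $U_\varepsilon$ is Stein and Theorem~\ref{kizuka} applies. Once this structural result is in hand, the product conclusion follows either by Theorem~\ref{nishino} or by a direct construction of a trivialization via compatibly normalized parametrizations $\widetilde{L_z} \simeq \mathbb{C}$ sending $p_T(z)$ to $0$ with a prescribed derivative.
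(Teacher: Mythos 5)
Your first reduction is on the right track: if $\mathbf a(z)<+\infty$ then $\widetilde{L_z}\simeq\mathbb C$, and $L_z^0$ has finite area, hence proper, hence $\overline{L_z}$ is a compact curve by Bishop's extension theorem. The argument that an infinite-degree orbifold covering forces infinite pulled-back area is exactly the relevant point. But you then stop using that point, and this is where the proof goes wrong.

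The same covering-degree argument, applied once more, gives much more: finiteness of $\mathbf a(z)$ forces the covering $\widetilde{L_z}\to L_z$ to have \emph{finite} degree, i.e.\ the orbifold fundamental group of $L_z$ is finite. Since $\widetilde{L_z}\simeq\mathbb C$, the only such orbifolds are $\mathbb C$ with at most one cone point (a finite subgroup of $\mathrm{Aut}(\mathbb C)$ is a rotation group). In particular $\overline{L_z}$ is a \emph{rational} curve. Your assertion that $\overline{L_z}$ must instead be a smooth elliptic curve or one of the Euclidean $\mathbb P$-orbifolds is false: those all have \emph{infinite} orbifold fundamental group, so they would force $\mathbf a(z)=+\infty$ and hence $z\notin R$. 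The reasoning you give for this step --- ``since $\mathcal F$ is not a rational quasi-fibration, $\overline{L_z}$ is a parabolic orbifold'' --- is a non sequitur: ``rational quasi-fibration'' means all leaves $L_z$ have $\widetilde{L_z}=\mathbb P$; it says nothing about the closures $\overline{L_z}$. A rational curve tangent to $\mathcal F$ passing through $\mathrm{Sing}(\mathcal F)$ gives a leaf $L_z\simeq\mathbb C$ (parabolic) or even hyperbolic, exactly as the paper stresses in the discussion of foliations by rational curves. Here the closures are rational curves while the leaves are parabolic.

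Because of this misclassification, the rest of the argument drifts away from what is needed and creates unnecessary obstacles. Once one knows that for $z\in R$ the closure $\overline{L_z}$ is a \emph{rational} curve, one can parametrize such curves by the cycle space (as you suggest, and as the paper does via \cite{CaP}), get the dichotomy for the set $R'=\{z:\overline{L_z}\text{ rational}\}$, and for a component $A'$ of $R'$ obtain a meromorphic map $A'\times\mathbb P\dashrightarrow X$ sending $\{z\}\times\mathbb P$ to $\overline{L_z}$. Then $A=A'\cap R$ is cut out by the analytic condition that at most one point of $\{z\}\times\mathbb P$ maps to a non-vanishing end and at most one to a vanishing end of order $\geq 2$. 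If $R=T$, the same family $T\times\mathbb P\dashrightarrow X$, restricted to $T\times(\mathbb P\setminus\{\infty\})$ with $\infty$ corresponding to the unique non-vanishing end, lifts directly to an isomorphism $U_T\simeq T\times\mathbb C$. There is no need for Theorem~\ref{nishino}, no need for an ``elliptic analogue of Proposition~\ref{reeb}'' (which, as you yourself note, you cannot supply), and indeed no elliptic fibration in sight. The ``main obstacle'' you identify at the end is a self-imposed one, created by the incorrect claim that the leaf closures are elliptic or Euclidean orbifolds rather than rational curves.
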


\begin{proof}
If $z\in R$, then $\widetilde{L_z}$ has finite area and, a fortiori,
$L_z^0$ has finite area. In particular, $L_z^0$ is properly embedded
in $X^0$: otherwise, $L_z^0$ should cut some foliated chart, where
${\mathcal F}^0$ is trivialized, along infinitely many plaques, and
so $L_z^0$ would have infinite area. Because $X\setminus X^0$ is an
analytic subset of $X$, the fact that $L_z^0\subset X^0$ is properly
embedded and with finite area implies that its closure
$\overline{L_z^0}$ in $X$ is a complex compact curve, by Bishop
extension theorem \cite{Siu} \cite{Chi}. This closure coincides with
$\overline{L_z}$, the closure of $L_z$.

The finiteness of the area of $\widetilde{L_z}$ implies also that
the covering $\widetilde{L_z}\to L_z$ has finite order, i.e. the
orbifold fundamental group of $L_z$ is finite. By the previous
paragraph, $L_z$ can be compactified (as a complex curve) by adding
a finite set. This excludes the case $\widetilde{L_z}={\mathbb D}$:
a finite quotient of the disc does not enjoy such a property. Also,
the case $\widetilde{L_z}={\mathbb P}$ is excluded by our standing
assumptions. Therefore $\widetilde{L_z}={\mathbb C}$. Moreover,
again the finiteness of the orbifold fundamental group implies that
$L_z$ is equal to ${\mathbb C}$ with at most one multiple point. The
closure $\overline{L_z}$ is a rational curve in $X$.

Now, by general principles of analytic geometry \cite{CaP}, rational
curves in $X$ (K\"ahler) constitute an analytic space with countable
base, each irreducible component of which can be compactified by
adding points corresponding to trees of rational curves. It follows
easily from this fact that the subset
$$R' = \{ z\in T\ \vert\ \overline{L_z}\ {\rm is\ rational}\}$$
is either a countable union of analytic subsets of $T$ of positive
codimension, or it is equal to the full $T$. Moreover, if $A'$ is a
component of $R'$ then we can find a meromorphic map
$A'\times{\mathbb P}\dashrightarrow X$ sending $\{ z\}\times{\mathbb
P}$ to $\overline{L_z}$, for every $z\in A'$ (compare with the
arguments used at the beginning of the proof of Theorem
\ref{hartogs}).

Not every $z\in A'$, however, belongs to $R$:  a point $z\in A'$
belongs to $R$ if and only if among the points of $\{ z\}\times
{\mathbb P}$ sent to $Sing({\mathcal F})$ only one does not
correspond to a vanishing end of $L_z^0$, and at most one
corresponds to a vanishing end of order $m \ge 2$. By a simple
semicontinuity argument, $A'\cap R = A$ is an analytic subset of
$A'$. Hence $R$ also satisfies the above dichotomy.

Finally, if $R=T$ then we have a map $T\times{\mathbb
P}\dashrightarrow X$ sending each fiber $\{ z\}\times{\mathbb P}$ to
$\overline{L_z}$ and $(z,\infty )$ to the unique nonvanishing end of
$L_z^0$. It follows that $U_T = T\times{\mathbb C}$.
\end{proof}

Let now $U\subset X$ be an open connected subset where ${\mathcal
F}$ is generated by a holomorphic vector field $v\in\Theta (U)$,
vanishing precisely on $Sing({\mathcal F})\cap U$. Set $U^0 =
U\setminus (Sing({\mathcal F})\cap U)$, and consider the real
function
$$F : U^0 \rightarrow [-\infty ,+\infty )$$
$$F(q) = \log \| v(q)\|_{Poin}$$
where, as usual, $\| v(q)\|_{Poin}$ is the norm of $v(q)$ measured
with the Poincar\'e metric on $L_q$. Recall that this ``metric'' is
identically zero when $L_q$ is parabolic, so that $F$ is equal to
$-\infty$ on the intersection of $U^0$ with parabolic leaves.

\begin{proposition} \label{psh}
The function $F$ above is either plurisubharmonic or identically
$-\infty$.
\end{proposition}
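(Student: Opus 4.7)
The statement is local, so it suffices to prove that $F$ is plurisubharmonic (or $\equiv-\infty$) in a neighbourhood of each point $q_0\in U^0$. Pick a small holomorphic transversal $T\subset U^0$ to ${\mathcal F}^0$ with $q_0\in T$ and form the covering tube $U_T$ of Proposition \ref{coveringtube}, with projection $P_T$, section $p_T$, and meromorphic immersion $\Pi_T:U_T\dashrightarrow X$. Near the section $p_T(T)$ the map $\Pi_T$ is a local biholomorphism onto a neighbourhood of $T$ in $X$, so after shrinking $T$ we may identify a neighbourhood of $p_T(T)$ in $U_T$ with a neighbourhood of $T$ in $U$. Under this identification, $v$ pulls back to a holomorphic vector field $\widetilde v$ on $U_T$ tangent to the fibres of $P_T$. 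Since each universal covering $\widetilde{L_t}\to L_t$ is a local isometry for the Poincar\'e metric, we have $F=\log\|\widetilde v\|_{\mathrm{Poin}}$ on a neighbourhood of $p_T(T)$, the norm being computed fibrewise in $U_T$. Hence it is enough to show that $\widetilde F:=\log\|\widetilde v\|_{\mathrm{Poin}}$ is plurisubharmonic near $p_T(q_0)$ in $U_T$.

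\textbf{Application of the convexity theorem.} Choose a closed embedded disc $S\subset T$ containing $q_0$ in its interior, small enough that conditions (a) and (b) of Section 5 hold. (If no such $S$ exists because of (b), then by Lemma \ref{areafibers} we have $U_T\simeq T\times{\mathbb C}$, all leaves are parabolic, and $F\equiv-\infty$ trivially.) For every compact $K\subset\partial U_S$, Theorem \ref{convexity} produces a real-analytic Levi-flat hypersurface $M_K\subset U_S$ bounding a domain $\Omega_K\subset U_S$ whose fibres $\Omega_K(z)$ are discs and whose boundary is transverse to the fibres of $P_S$. Since a Levi-flat hypersurface is in particular pseudoconvex, the fibration $\Omega_K\to S$ satisfies all three hypotheses of Proposition \ref{yamaguchi}, so its fibrewise Poincar\'e metric has plurisubharmonic variation. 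In other words, the function
$$\widetilde F_K(\widetilde q):=\log\|\widetilde v(\widetilde q)\|_{\mathrm{Poin}(\Omega_K(P_S(\widetilde q)))}$$
is plurisubharmonic on $\Omega_K$ (or $\equiv-\infty$).

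\textbf{Exhaustion and conclusion.} Take an increasing sequence of compacts $K_n\subset\partial U_S$ that exhausts $\partial U_S$. Hypothesis (b) (infinite area of boundary fibres) is what permits the boundary discs $\Omega_{K_n}(z)$, $z\in\partial S$, to grow without bound. Exploiting the great freedom in the choice of the tori $\Gamma_t$ (Lemma \ref{family}), one arranges the family $\{\Omega_{K_n}\}$ so that every point of $U_S$ lies in $\Omega_{K_n}$ for all $n$ sufficiently large and so that $\Omega_{K_n}(z)$ exhausts the full fibre $P_S^{-1}(z)=\widetilde{L_z}$ as $n\to+\infty$. By the Schwarz--Pick lemma, the Poincar\'e metric on $\Omega_{K_n}(z)$ decreases monotonically, and a standard exhaustion argument yields that it converges to the Poincar\'e metric on $\widetilde{L_z}$ (identically zero when $\widetilde{L_z}\simeq{\mathbb C}$). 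Hence $\widetilde F_{K_n}\searrow \widetilde F$ pointwise. A decreasing limit of plurisubharmonic functions is either plurisubharmonic or identically $-\infty$, and the dichotomy propagates from local neighbourhoods to all of $U$ by connectedness. Pushing down via $\Pi_T$ gives the result for $F$ on $U^0$.

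\textbf{Main obstacle.} The delicate step is the exhaustion: verifying that the domains $\Omega_{K_n}$ produced by the continuity method in Theorem \ref{convexity} can indeed be chosen to cover $U_S$ in the limit, rather than merely enclosing ever-larger boundary circles over $\partial S$. This demands a careful combination of the flexibility in Lemma \ref{family}, the continuity method itself, and a compactness argument on Levi-flat extensions controlling what happens in the interior of $S$. Once this is in hand, the proposition follows by a formal combination of Proposition \ref{yamaguchi}, standard properties of plurisubharmonic functions under monotone limits, and the Schwarz--Pick monotonicity of the Poincar\'e metric.
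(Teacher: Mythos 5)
Your overall setup (reduce to the covering tube, pick $S\subset T$ satisfying (a) and (b), invoke Theorem \ref{convexity} to get Levi-flat domains $\Omega_{K_n}$, apply Proposition \ref{yamaguchi} to each) matches the paper's. However, the exhaustion step, which you yourself flag as the ``main obstacle,'' is not merely delicate: it is unavailable, and the paper explicitly says so. Immediately after stating properties (i)--(ii) of the domains $\Omega_j$ the paper remarks: ``Note that one cannot hope that the exhaustive property in (ii) holds also for $z$ in the interior of $S$.'' Theorem \ref{convexity} pushes discs $D(z)$, $z\in\partial S$, inward through a Levi-flat hypersurface, but it gives no control on how large the interior slices $\Omega_j(z)$, $z\in\mathrm{int}\,S$, become. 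Consequently your claim that $\widetilde F_{K_n}\searrow\widetilde F$ pointwise on (a neighbourhood in) $U_S$ fails: at an interior point $z$, $F_j(z)$ decreases to some limit $c$, but one may have $F(z)<c$ strictly if the $\Omega_j(z)$ stop short of exhausting the fibre $P_S^{-1}(z)$. The conclusion that a decreasing limit of psh functions is psh therefore does not apply.

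The paper's argument circumvents this precisely. It works with the functions $F_j$ on the base $S$ rather than on the tube, uses Proposition \ref{yamaguchi} to get the submean inequality $F_j(0)\le\frac{1}{2\pi}\int_0^{2\pi}F_j(e^{i\theta})\,d\theta$ at the centre of $S$, and then passes to the limit asymmetrically: on the boundary $\partial S$ the exhaustion \emph{does} hold, so $F_j(z)\searrow F(z)$ there and the boundary integral converges by monotone convergence; at the centre one only needs the trivial one-sided bound $F(0)\le F_j(0)$ (valid because $\Omega_j(0)\subset P_S^{-1}(0)$ and the Poincar\'e metric is monotone under inclusion). Combining gives $F(0)\le\frac{1}{2\pi}\int_0^{2\pi}F(e^{i\theta})\,d\theta$, which is the submean inequality for $F$, with no need for interior exhaustion. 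A further approximation argument (approximating an arbitrary closed disc $S\subset T$ by discs satisfying (a) and (b), using Lemma \ref{areafibers}) then completes the proof. You should replace the exhaustion claim by this one-sided limiting argument; as written, your proof has a genuine gap at exactly the point where the paper is most careful.
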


\begin{proof}
Let $T\subset U^0$ be a transversal to ${\mathcal F}^0$, and let
$U_T$ be the corresponding covering tube. Put on the fibers of $U_T$
their Poincar\'e metric. The vector field $v$ induces a nonsingular
vertical vector field on $U_T$ along $p_T(T)$, which we denote again
by $v$. Due to the arbitrariness of $T$, and by a connectivity
argument, we need just to verify that the function on $T$ defined by
$$F(z) = \log \| v(p_T(z)\|_{Poin}$$
is either plurisubharmonic or identically $-\infty$. That is, the
fiberwise Poincar\'e metric on $U_T$ has a plurisubharmonic
variation.

The upper semicontinuity of $F$ being evident (see e.g. \cite[\S
3]{Suz} or \cite{Kiz}), let us consider the submean inequality over
discs in $T$.

Take a closed disc $S\subset T$ as in Theorem \ref{convexity}, i.e.
satisfying hypotheses (a) and (b) of Section 5. By that Theorem, and
by choosing an increasing sequence of compact subsets $K_j$ in
$\partial U_S$, we can find a sequence of relatively compact domains
$\Omega_j\subset U_S$, $j\in{\mathbb N}$, such that:

\begin{figure}[h]
\includegraphics[width=7cm,height=8cm]{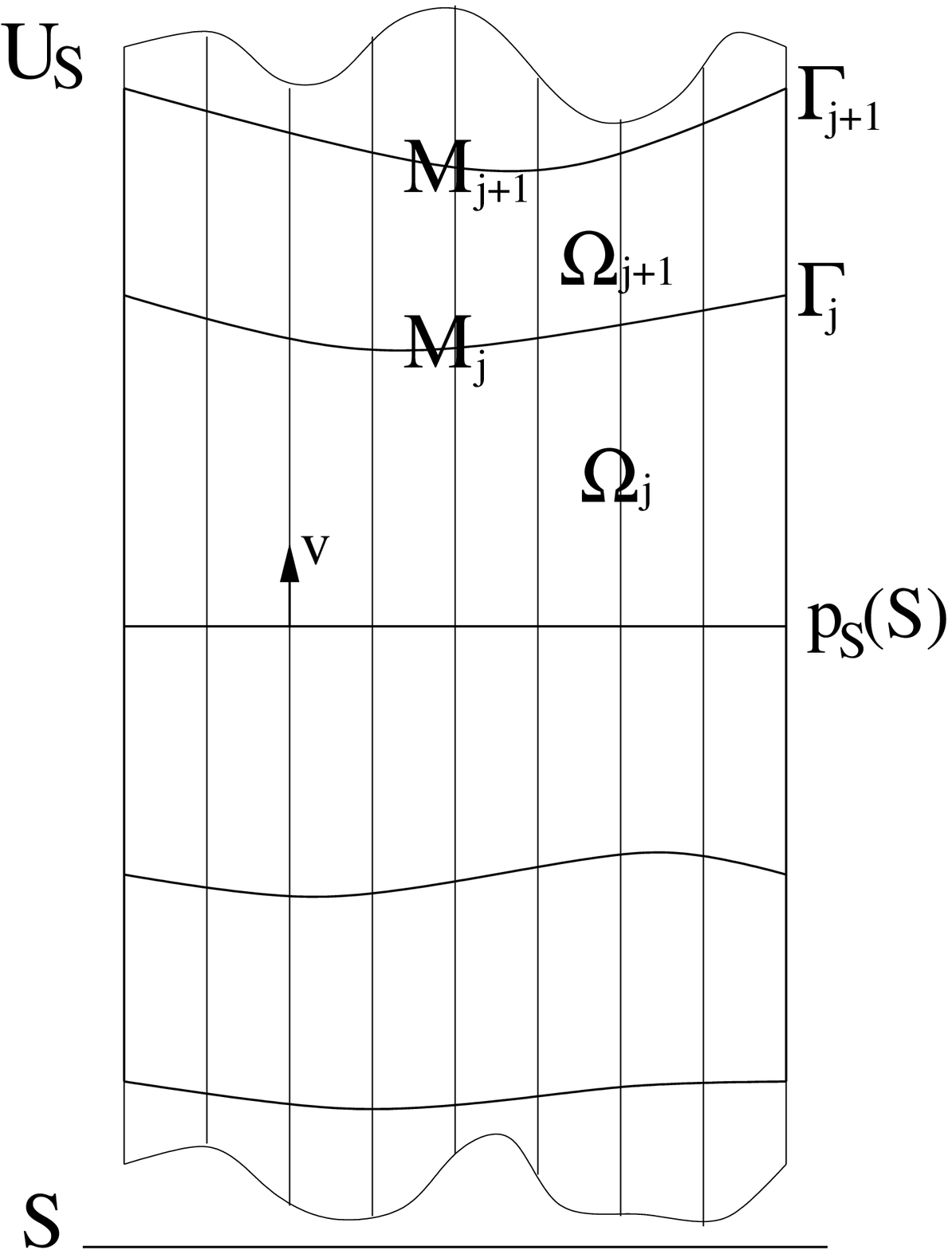}
\end{figure}

\begin{enumerate}
\item[(i)] the relative boundary of $\Omega_j$ in $U_S$ is a real
analytic Levi-flat hypersurface $M_j\subset U_S$, with boundary
$\Gamma_j\subset\partial U_S$, filled by a ${\mathbb S}^1$-family of
graphs of holomorphic sections of $U_S$ with boundary values in
$\Gamma_j$;
\item[(ii)] for every $z\in S$, the fiber $\Omega_j(z) =
\Omega_j\cap P_S^{-1}(z)$ is a disc, centered at $p_S(z)$; moreover,
for $z\in\partial S$ we have $\cup_{j=1}^{+\infty} \Omega_j(z) =
P_S^{-1}(z)$.
\end{enumerate}
Note that one cannot hope that the exhaustive property in (ii) holds
also for $z$ in the interior of $S$.

We may apply to $\Omega_j$, whose boundary is Levi-flat and hence
pseudoconvex, the result of Yamaguchi discussed in Section 2, more
precisely Proposition \ref{yamaguchi}. It says that the function on
$S$
$$F_j(z) = \log \| v(p_S(z)\|_{Poin(j)},$$
where $\| v(p_S(z)\|_{Poin(j)}$ is the norm with respect to the
Poincar\'e metric on the disc $\Omega_j(z)$, is plurisubharmonic.
Hence we have at the center 0 of $S\simeq\overline{\mathbb D}$ the
submean inequality:
$$F_j(0) \le \frac{1}{2\pi}\int_0^{2\pi} F_j(e^{i\theta})d\theta .$$

We now pass to the limit $j\to +\infty$. For every $z\in\partial S$
we have $F_j(z)\to F(z)$, by the exhaustive property in (ii) above.
Moreover, we may assume that $\Omega_j(z)$ is an increasing sequence
for every $z\in\partial S$ (and in fact for every $z\in S$, but this
is not important), so that $F_j(z)$ converges to $F(z)$ in a
decreasing way, by the monotonicity property of the Poincar\'e
metric. It follows that the boundary integral in the submean
inequality above converges, as $j\to +\infty$, to
$\frac{1}{2\pi}\int_0^{2\pi} F(e^{i\theta})d\theta$ (which may be
$-\infty$, of course).

Concerning $F_j(0)$, it is sufficient to observe that, obviously,
$F(0)\le F_j(0)$, because $\Omega_j(0)\subset P_S^{-1}(0)$, and so
$F(0)\le\liminf_{j\to +\infty} F_j(0)$. In fact, and because
$\Omega_j(0)$ is increasing, $F_j(0)$ converges to some value $c$ in
$[-\infty ,+\infty )$, but we may have the strict inequality $F(0) <
c$ if $\Omega_j(0)$ do not exhaust $P_S^{-1}(0)$. Therefore the
above submean inequality gives, at the limit,
$$F(0) \le \frac{1}{2\pi}\int_0^{2\pi} F(e^{i\theta})d\theta$$
that is, the submean inequality for $F$ on $S$.

Take now an arbitrary closed disc $S\subset T$, centered at some
point $p\in T$. By Lemma \ref{areafibers} and the remarks before it,
we may approximate $S$ by a sequence of closed discs $S_j$ with the
same center $p$ and satisfying moreover hypotheses (a) and (b)
before Theorem \ref{convexity} (unless $R=T$, but in that case $U_T
= T\times{\mathbb C}$ and $F\equiv -\infty$). More precisely, if
$\varphi :\overline{\mathbb D}\to T$ is a parametrization of $S$,
$\varphi (0)=p$, then we may uniformly approximate $\varphi$ by a
sequence of embeddings $\varphi_j :\overline{\mathbb D}\to T$,
$\varphi_j(0)=p$, such that $S_j=\varphi_j(\overline{\mathbb D})$
satisfies the assumptions of Theorem \ref{convexity}. Hence we have,
by the previous arguments and for every $j$,
$$F(p)\le \frac{1}{2\pi}\int_0^{2\pi}
F(\varphi_j(e^{i\theta}))d\theta$$ and passing to the limit, using
Fatou Lemma, and taking into account the upper semicontinuity of
$F$, we finally obtain
$$F(p) \le \limsup_{j\to +\infty}\frac{1}{2\pi}\int_0^{2\pi}
F(\varphi_j(e^{i\theta}))d\theta \le \frac{1}{2\pi}\int_0^{2\pi}
\limsup_{j\to +\infty}F(\varphi_j(e^{i\theta}))d\theta \le$$
$$\le\frac{1}{2\pi}\int_0^{2\pi} F(\varphi (e^{i\theta}))d\theta .$$
This is the submean inequality on an arbitrary disc $S\subset T$,
and so $F$ is, if not identically $-\infty$, plurisubharmonic.
\end{proof}

Because $U\setminus U^0$ is an analytic subset of codimension at
least two, the above function $F$ on $U^0$ admits a (unique)
plurisubharmonic extension to the full $U$, given explicitely by
$$F(q) = \limsup_{p\in U^0,\ p\to q} F(p)\ ,\quad q\in U\setminus
U^0.$$

\begin{proposition} \label{poles}
We have $F(q)=-\infty$ for every $q\in U\setminus U^0$.
\end{proposition}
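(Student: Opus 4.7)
The plan is to produce, for each $p \in U^0$ close to $q$, an explicit holomorphic disc $\gamma_p : \mathbb{D}_{r_p} \to L_p$ with $\gamma_p(0)=p$ and $\gamma_p'(0)=v(p)$, whose radius $r_p$ diverges as $p\to q$. The Ahlfors-Schwarz lemma will then force $\|v(p)\|_{Poin(L_p)}$ to zero, hence $F(p)\to -\infty$.

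First I would work in local coordinates centred at $q=0$. Since $v$ is holomorphic and $v(0)=0$, Taylor's formula yields constants $C>0$ and $\delta>0$ with $|v(x)|\le C|x|$ on the ball $B(0,2\delta)\subset U$. For $p\in B(0,\delta)\cap U^0$, consider the complex-time holomorphic flow $\gamma_p(t)=\phi_t(p)$. Integrating the ODE $\gamma_p'=v(\gamma_p)$ along the straight segment from $0$ to $t$ and applying Gronwall's inequality to the real-variable majorant $\tau\mapsto |\gamma_p(\tau e^{i\arg t})|$ gives
$$|\gamma_p(t)| \le |p|\, e^{C|t|}$$
as long as the orbit stays in $B(0,2\delta)$. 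Hence $\gamma_p$ is defined on $\mathbb{D}_{r_p}$ with
$$r_p = \frac{1}{C}\log\frac{2\delta}{|p|},$$
and $r_p\to +\infty$ as $p\to q$. By uniqueness of the holomorphic flow at any zero of $v$, the orbit never hits $Sing(\mathcal{F})$, so $\gamma_p(\mathbb{D}_{r_p})\subset U^0$, and tangency to $v$ forces $\gamma_p$ to take values in the leaf $L_p^0\subset L_p$.

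Next I would observe that, since $\gamma_p$ avoids the orbifold points $L_p\setminus L_p^0$ and its domain is simply connected, it lifts to the universal cover $\widetilde{L_p}$. Assuming $L_p$ is hyperbolic, the Ahlfors-Schwarz lemma applied to the Poincar\'e metrics (both of curvature $-1$) gives
$$\|v(p)\|_{Poin(L_p)} = \|\gamma_p'(0)\|_{Poin(L_p)} \le \bigl\|\partial/\partial t|_0\bigr\|_{Poin(\mathbb{D}_{r_p})} = \frac{2}{r_p}.$$
When $L_p$ is parabolic, $F(p)=-\infty$ by the very definition of $F$; and rational leaves are excluded by our standing assumption via Proposition \ref{reeb}. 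In every case, as $p\to q$ in $U^0$ one has $F(p)\le \log 2 - \log r_p \to -\infty$, and the limsup definition of the extension of $F$ at $q$ yields $F(q)=-\infty$.

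The only delicate point is the Gronwall estimate with complex time, which one handles by restricting to rays from the origin; the application of Ahlfors-Schwarz is painless here precisely because $\gamma_p$ stays in the regular part of the leaf, allowing us to lift to $\widetilde{L_p}$ and work on the disc upstairs.
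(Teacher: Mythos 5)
Your proof is correct and follows essentially the same route as the paper: use the holomorphic local flow of $v$, observe that near a zero $q$ of $v$ the flow through a nearby point $p$ is defined for complex time in a disc whose radius tends to $+\infty$ as $p\to q$, and conclude by Schwarz--Pick (monotonicity of the Poincar\'e metric). The only difference is that you make the growth of the flow domain explicit via a Gronwall estimate, whereas the paper simply invokes standard ODE theory for the lower-semicontinuous escape-time function $\rho$ with $\rho\equiv +\infty$ on the zero set of $v$.
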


\begin{proof}
The vector field $v$ on $U$ has a local flow: a holomorphic map
$$\Phi : {\mathcal D}\rightarrow U$$
defined on a domain of the form
$${\mathcal D} = \{ (p,t)\in U\times{\mathbb C}\ \vert\ |t| < \rho
(p)\}$$ for a suitable lower semicontinuous function $\rho : U\to
(0,+\infty ]$, such that $\Phi (p,0)=p$,
$\frac{\partial\Phi}{\partial t}(p,0)=v(p)$, and $\Phi (p,t_1+t_2) =
\Phi (\Phi (p,t_1),t_2)$ whenever it makes sense. Standard results
on ordinary differential equations show that we may choose the
function $\rho$ so that $\rho\equiv +\infty$ on $U\setminus U^0 =$
the zero set of $v$.

Take $q\in U\setminus U^0$ and $p\in U^0$ close to it. Then $\Phi
(p,\cdot )$ sends the large disc ${\mathbb D}(\rho (p))$ into
$L_p^0\cap U^0$, and consequently into $L_p$, with derivative at 0
equal to $v(p)$. It follows, by monotonicity of the Poincar\'e
metric, that the Poincar\'e norm of $v(p)$ is bounded from above by
something like $\frac{1}{\rho (p)}$, which tends to 0 as $p\to q$.
We therefore obtain that $\log \| v(p)\|_{Poin}$ tends to $-\infty$
as $p\to q$.
\end{proof}

The functions $F : U\to [-\infty ,+\infty )$ so constructed can be
seen \cite{Dem} as local weights of a (singular) hermitian metric on
the tangent bundle $T_{\mathcal F}$ of ${\mathcal F}$, and by
duality on the canonical bundle $K_{\mathcal F}= T_{\mathcal F}^*$.
Indeed, if $v_j\in\Theta (U_j)$ are local generators of ${\mathcal
F}$, for some covering $\{ U_j\}$ of $X$, with $v_j = g_{jk} v_k$
for a multiplicative cocycle $g_{jk}$ generating $K_{\mathcal F}$,
then the functions $F_j =\log\| v_j\|_{Poin}$ are related by
$F_j-F_k = \log | g_{jk} |$. The curvature of this metric on
$K_{\mathcal F}$ is the current on $X$, of bidegree $(1,1)$, locally
defined by $\frac{i}{\pi}\partial\bar\partial F_j$. Hence
Propositions \ref{psh} and \ref{poles} can be restated in the
following more intrinsic form, where we set $Parab({\mathcal F}) =
\{ p\in X^0\ \vert\ \widetilde{L_p} = {\mathbb C}\}$.

\begin{theorem} \label{hyperbolic}
Let $X$ be a compact connected K\"ahler manifold and let ${\mathcal
F}$ be a foliation by curves on $X$. Suppose that ${\mathcal F}$ has
at least one hyperbolic leaf. Then the Poincar\'e metric on the
leaves of ${\mathcal F}$ induces a hermitian metric on the canonical
bundle $K_{\mathcal F}$ whose curvature is positive, in the sense of
currents. Moreover, the polar set of this metric coincides with
$Sing({\mathcal F})\cup Parab({\mathcal F})$.
\end{theorem}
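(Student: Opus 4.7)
The plan is to read the theorem as the intrinsic repackaging of Propositions \ref{psh} and \ref{poles} already suggested in the preceding paragraph, with the additional task of converting their local dichotomy into a genuine global positivity statement using the hyperbolic-leaf hypothesis. I would choose an open cover $\{U_j\}$ of $X$ and holomorphic vector fields $v_j\in\Theta(U_j)$ generating $\mathcal F$, with $v_j=g_{jk}v_k$ on $U_j\cap U_k$ for a cocycle $\{g_{jk}\}\in\mathcal O^*(U_j\cap U_k)$ representing $K_{\mathcal F}$. Set $F_j=\log\|v_j\|_{Poin}$ on $U_j^0=U_j\setminus Sing(\mathcal F)$, extended to $U_j$ by the upper-semicontinuous regularization described just before the theorem. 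The identity $F_j-F_k=\log|g_{jk}|$ on $U_j\cap U_k$, in which $\log|g_{jk}|$ is pluriharmonic, shows that $\{F_j\}$ is precisely the local weight data of a singular hermitian metric on $T_{\mathcal F}$, whose dual on $K_{\mathcal F}=T_{\mathcal F}^*$ has local weights $F_j$ and curvature current $\frac{i}{\pi}\partial\bar\partial F_j$. So it remains to prove (i) each $F_j$ is plurisubharmonic on $U_j$, and (ii) its polar set on $U_j$ coincides with $(Sing(\mathcal F)\cup Parab(\mathcal F))\cap U_j$.

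For (i), Proposition \ref{psh} gives the chart-local dichotomy: either $F_j$ is plurisubharmonic on $U_j$ (after Proposition \ref{poles} handles the codimension-two points of $U_j\setminus U_j^0$), or $F_j\equiv-\infty$ on $U_j^0$. The hyperbolic-leaf hypothesis is exactly what is needed to exclude the second alternative globally. Indeed, pick a point $p_0\in X^0$ with $\widetilde{L_{p_0}}\simeq\mathbb D$ and a chart $U_{j_0}\ni p_0$; since $v_{j_0}(p_0)\neq0$ and the Poincar\'e metric on $\widetilde{L_{p_0}}$ is genuine, $F_{j_0}(p_0)$ is finite, so the psh alternative holds on $U_{j_0}$. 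Propagation along the covering is immediate from $F_k=F_j-\log|g_{jk}|$ on $U_j\cap U_k$: whenever $U_j\cap U_k\neq\emptyset$ and $F_j\not\equiv-\infty$, the function $F_k$ differs from $F_j$ on the intersection by a finite pluriharmonic term, so $F_k\not\equiv-\infty$ there either, and the dichotomy of Proposition \ref{psh} forces $F_k$ to be plurisubharmonic on all of $U_k$. By connectedness of $X$ every $F_j$ is plurisubharmonic, and the local currents $\frac{i}{\pi}\partial\bar\partial F_j\ge0$ glue (intrinsically, via the pluriharmonic transitions) to a global positive $(1,1)$-current on $X$, which is the curvature of the induced metric on $K_{\mathcal F}$.

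For (ii), the polar locus of the metric is $\{F_j=-\infty\}$ read in each chart. Proposition \ref{poles} gives $F_j\equiv-\infty$ on $Sing(\mathcal F)\cap U_j$. On $U_j^0$ the vector $v_j(p)$ is nonzero, so $F_j(p)=-\infty$ iff the leafwise Poincar\'e metric at $p$ vanishes identically, i.e.\ iff $\widetilde{L_p}=\mathbb C$, i.e.\ iff $p\in Parab(\mathcal F)$; and if $\widetilde{L_p}=\mathbb D$ then $F_j(p)$ is a finite real number. The pole locus on $U_j$ is therefore $(Sing(\mathcal F)\cup Parab(\mathcal F))\cap U_j$, as claimed. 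The only real point of friction in this proof is not any one of the steps above, all of which are essentially formal once Propositions \ref{psh} and \ref{poles} are granted, but rather the promotion of the chart-local ``psh or identically $-\infty$'' dichotomy to a coherent global statement; this is exactly what the existence of a single hyperbolic leaf, combined with the multiplicative cocycle relation, is designed to accomplish.
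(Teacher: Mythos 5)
Your proof is correct and follows the paper's approach: the theorem is, as you say, the intrinsic repackaging of Propositions \ref{psh} and \ref{poles}, and the paragraph preceding the statement together with the sentence immediately after it contain the same cocycle and propagation reasoning that you spell out. The only minor omission is that before invoking Propositions \ref{psh} and \ref{poles} you should note, via Proposition \ref{reeb}, that the hyperbolic-leaf hypothesis rules out the rational quasi-fibration case, which is a standing assumption under which those propositions were established.
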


A foliation with at least one hyperbolic leaf will be called {\bf
hyperbolic foliation}. The existence of a hyperbolic leaf (and the
connectedness of $X$) implies that ${\mathcal F}$ is not a rational
quasi-fibration, and all the local weights $F$ introduced above are
plurisubharmonic, and not identically $-\infty$.

Let us state two evident but important Corollaries.

\begin{corollary} \label{pseudoeffective}
The canonical bundle $K_{\mathcal F}$ of a hyperbolic foliation
${\mathcal F}$ is pseudoeffective.
\end{corollary}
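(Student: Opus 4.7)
The plan is to extract Corollary \ref{pseudoeffective} as an essentially immediate consequence of Theorem \ref{hyperbolic}, since pseudoeffectiveness is by definition the existence of a singular hermitian metric with positive curvature current. The only real content, beyond invoking the theorem, is to check that the metric produced is a bona fide singular metric, i.e.\ that its local weights are not identically $-\infty$.

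First I would recall the setup: by definition of a hyperbolic foliation, there exists at least one point $p_0\in X^0$ such that $\widetilde{L_{p_0}}={\mathbb D}$. Choose a local generator $v\in\Theta(U)$ of $\mathcal F$ on some open set $U$ containing $p_0$, and consider the weight $F=\log\|v\|_{Poin}$ on $U^0$. At the point $p_0$ the Poincaré norm $\|v(p_0)\|_{Poin}$ is a strictly positive finite number (the fiber $L_{p_0}$ carries a genuine complete metric of curvature $-1$), hence $F(p_0)\in\mathbb R$. In particular $F$ is not identically $-\infty$ on $U^0$, and therefore the alternative in Proposition \ref{psh} forces $F$ to be plurisubharmonic (and likewise on every open set meeting the leaf $L_{p_0}$; by the connectedness argument of Proposition \ref{psh}, the weights on all foliated charts are plurisubharmonic and not identically $-\infty$).

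Next I would assemble these local weights into a global singular hermitian metric on $K_\mathcal F$. If $\{v_j\in\Theta(U_j)\}$ are local generators with transition cocycle $v_j=g_{jk}v_k$, then the weights $F_j=\log\|v_j\|_{Poin}$ satisfy $F_j-F_k=\log|g_{jk}|$, as recalled in the paragraph preceding Theorem \ref{hyperbolic}. This is exactly the cocycle condition for the $F_j$ to define a singular hermitian metric $h$ on the dual bundle $K_\mathcal F=T_\mathcal F^*$, whose curvature current is locally $\frac{i}{\pi}\partial\bar\partial F_j$. Since each $F_j$ is plurisubharmonic, this current is a positive $(1,1)$-current on $X$. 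By the very definition of pseudoeffectiveness recalled after Theorem \ref{main}, $K_\mathcal F$ is pseudoeffective.

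There is essentially no obstacle here beyond bookkeeping: Theorem \ref{hyperbolic} does all the heavy lifting, and the only subtlety to guard against is the degenerate case in which $F\equiv -\infty$, which would give a ``metric'' identically zero and hence not a singular hermitian metric in Demailly's sense. That case is ruled out precisely by the hyperbolicity assumption, via the existence of the hyperbolic leaf $L_{p_0}$.
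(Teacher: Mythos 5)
Your proposal is correct and follows exactly the paper's route: the paper presents this corollary as an ``evident'' consequence of Theorem \ref{hyperbolic}, which already asserts that the leafwise Poincar\'e metric induces a (genuine, non-degenerate) singular hermitian metric on $K_{\mathcal F}$ with positive curvature current, and pseudoeffectiveness follows by definition. Your additional check that the local weights are not identically $-\infty$, via a hyperbolic leaf $L_{p_0}$ and Proposition \ref{psh}, is sound but is already built into the statement and proof of Theorem \ref{hyperbolic} (and noted explicitly in the paragraph following it), so it is a restatement rather than new content.
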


\begin{corollary} \label{pluripolar}
Given a hyperbolic foliation ${\mathcal F}$, the subset
$$Sing({\mathcal F})\cup Parab({\mathcal F})$$ is complete pluripolar
in $X$.
\end{corollary}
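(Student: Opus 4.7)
The plan is to deduce Corollary \ref{pluripolar} essentially directly from Theorem \ref{hyperbolic}, by unwinding the intrinsic statement into its local description. Recall that, by definition, a subset $E\subset X$ is \emph{complete pluripolar} if every point of $X$ admits an open neighbourhood $U$ and a plurisubharmonic function $\varphi:U\to[-\infty,+\infty)$, not identically $-\infty$, such that $E\cap U=\{\varphi=-\infty\}$. So the task is to produce such local potentials with polar set equal to $Sing(\mathcal{F})\cup Parab(\mathcal{F})$.

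First I would cover $X$ by open sets $U_j$ on which $\mathcal{F}$ is generated by a holomorphic vector field $v_j\in\Theta(U_j)$ vanishing exactly on $Sing(\mathcal{F})\cap U_j$, and I would set
\[
F_j(q)\;=\;\log\|v_j(q)\|_{Poin}\qquad (q\in U_j^0=U_j\setminus Sing(\mathcal{F})),
\]
extended to $U_j$ by the usual upper-semicontinuous regularisation (which by Proposition \ref{poles} forces $F_j\equiv -\infty$ on $U_j\setminus U_j^0$). By Proposition \ref{psh}, each $F_j$ is either plurisubharmonic on $U_j$ or identically $-\infty$; the two alternatives are related across overlaps by $F_j-F_k=\log|g_{jk}|$, which is pluriharmonic, so the alternative propagates by connectedness of $X$. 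Since $\mathcal{F}$ is hyperbolic, some leaf carries a non-trivial Poincaré metric, hence some $F_j$ is not identically $-\infty$, and therefore none of them is.

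Next I would identify the polar locus of $F_j$. By construction, for $q\in U_j^0$, $F_j(q)=-\infty$ iff the Poincaré metric vanishes on $L_q$, i.e.\ iff $q\in Parab(\mathcal{F})$; and by Proposition \ref{poles}, $F_j\equiv -\infty$ on $Sing(\mathcal{F})\cap U_j$. Conversely, at any point $q\in U_j^0\setminus Parab(\mathcal{F})$ the leaf $L_q$ is hyperbolic and $v_j(q)\neq 0$, so $F_j(q)>-\infty$. Hence
\[
\{F_j=-\infty\}\;=\;\bigl(Sing(\mathcal{F})\cup Parab(\mathcal{F})\bigr)\cap U_j.
\]
This exhibits $Sing(\mathcal{F})\cup Parab(\mathcal{F})$ as locally the pole set of a plurisubharmonic function, which is exactly the definition of complete pluripolarity.

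The only subtle point, already handled by Theorem \ref{hyperbolic}, is the transition from the dichotomy in Proposition \ref{psh} to genuine plurisubharmonicity of \emph{every} local weight: one must know that the existence of \emph{one} hyperbolic leaf forces $F_j\not\equiv -\infty$ on every chart. This is where the hyperbolicity assumption is used, together with the cocycle relation $F_j-F_k=\log|g_{jk}|$ and the connectedness of $X$; there is no substantive further obstacle, since the identification of the polar set is purely pointwise.
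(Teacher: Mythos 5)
Your proof is correct and matches what the paper intends: Corollary \ref{pluripolar} is stated without argument as an ``evident'' consequence of Theorem \ref{hyperbolic}, and your unwinding of that theorem into local weights $F_j=\log\|v_j\|_{Poin}$, the cocycle relation $F_j-F_k=\log|g_{jk}|$, and the identification of the polar set via Propositions \ref{psh} and \ref{poles} is exactly the implicit reasoning. One tiny point worth making explicit: to conclude that $F_j>-\infty$ on $U_j^0\setminus Parab(\mathcal F)$ you use that every non-parabolic leaf is hyperbolic, which holds because the hyperbolicity assumption excludes rational leaves (Proposition \ref{reeb}); otherwise the argument is complete.
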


We think that the conclusion of this last Corollary could be
strengthened. The most optimistic conjecture is that $Sing({\mathcal
F})\cup Parab({\mathcal F})$ is even an {\it analytic subset} of
$X$. At the moment, however, we are very far from proving such a
fact (except when $\dim X =2$, where special techniques are
available, see \cite{MQ1} and \cite{Br1}). Even the {\it closedness}
of $Sing({\mathcal F})\cup Parab({\mathcal F})$ seems an open
problem! This is related to the more general problem of the
continuity of the leafwise Poincar\'e metric (which would give, in
particular, the closedness of its polar set). Let us prove a partial
result in this direction, following a rather standard hyperbolic
argument \cite{Ghy} \cite{Br2}. Recall that a complex compact
analytic space $Z$ is {\it hyperbolic} if every holomorphic map of
${\mathbb C}$ into $Z$ is constant \cite{Lan}.

\begin{theorem} \label{continuity}
Let ${\mathcal F}$ be a foliation by curves on a compact connected
K\"ahler manifold $M$. Suppose that:
\begin{enumerate}
\item[(i)] every leaf is hyperbolic;
\item[(ii)] $Sing({\mathcal F})$ is hyperbolic.
\end{enumerate}
Then the leafwise Poincar\'e metric is continuous.
\end{theorem}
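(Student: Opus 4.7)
The plan is to prove lower semicontinuity of $F(q)=\log\|v(q)\|_{\mathrm{Poin}}$; upper semicontinuity is already given by Theorem \ref{hyperbolic}. By Proposition \ref{poles}, $F\equiv-\infty$ on $Sing(\mathcal{F})$ and, since all leaves are hyperbolic, $F$ is real-valued on $X^{0}$. At singular points, l.s.c.\ follows automatically from u.s.c., so I reduce to ruling out, at a point $p_{0}\in X^{0}$, a sequence $p_{n}\to p_{0}$ with $F(p_{n})\to c$ for some $c\in[-\infty,F(p_{0}))$.

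For each $p\in X^{0}$, the universal orbifold covering $\widetilde{L_{p}}\to L_{p}$ composed with $i_{p}:L_{p}\to X$ gives a holomorphic map $\widetilde{L_{p}}=\mathbb{D}\to X$. I choose the parametrisation sending $0$ to the basepoint over $p$ and normalised in direction so that the derivative at $0$ is a positive multiple of $v(p)$; call the resulting map $f_{p}:\mathbb{D}\to X$. Then $f_{p}(0)=p$ and, measured in a fixed local chart near $p_{0}$, $|f_{p}'(0)|=|v(p)|/\|v(p)\|_{\mathrm{Poin}}$. Under the contradiction hypothesis the quantities $|f_{p_{n}}'(0)|$ converge to $|v(p_{0})|/e^{c}$, which strictly exceeds $|v(p_{0})|/\|v(p_{0})\|_{\mathrm{Poin}}$.

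In the case $c\in\mathbb{R}$ the derivatives $|f_{p_{n}}'(0)|$ are bounded and the images lie in compact $X$; by Montel's theorem, after extraction $f_{p_{n}}\to f_{\infty}:\mathbb{D}\to X$ locally uniformly, with $f_{\infty}(0)=p_{0}$ and $|f_{\infty}'(0)|=|v(p_{0})|/e^{c}$. Tangency to $\mathcal{F}$ is a closed condition, so $f_{\infty}$ is tangent to $\mathcal{F}$; since $p_{0}\in X^{0}$, the image lies in the orbifold leaf $L_{p_{0}}$ (apart from isolated preimages of $Sing(\mathcal{F})$ corresponding to vanishing ends), and $f_{\infty}$ lifts to a holomorphic self-map of $\widetilde{L_{p_{0}}}=\mathbb{D}$ fixing $0$. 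Schwarz's lemma applied to the lift bounds $|f_{\infty}'(0)|$ by $|v(p_{0})|/\|v(p_{0})\|_{\mathrm{Poin}}$, the desired contradiction.

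In the case $c=-\infty$ we have $|f_{p_{n}}'(0)|\to\infty$, so Brody's reparametrisation lemma produces rescaled discs $g_{n}:\mathbb{D}(R_{n})\to X$ with $R_{n}\to\infty$, $|g_{n}'(0)|=1$ and $|g_{n}'|$ uniformly bounded on compacts, converging to a nonconstant entire curve $\phi:\mathbb{C}\to X$ tangent to $\mathcal{F}$. If $\phi(\mathbb{C})\subset Sing(\mathcal{F})$ we contradict hypothesis (ii); otherwise $\phi^{-1}(Sing(\mathcal{F}))$ is discrete and $\phi$ factors through a single orbifold leaf $L$, lifting to a nonconstant map $\mathbb{C}\to\widetilde{L}$, which forces $\widetilde{L}\in\{\mathbb{C},\mathbb{P}\}$ and contradicts hypothesis (i). The main obstacle is justifying the factorisation through a single leaf in either limit, since the limit curve may cross $Sing(\mathcal{F})$; here the orbifold definition of leaves from Section 4 (so that such crossings correspond to vanishing ends absorbed into $L$), together with the codimension of $Sing(\mathcal{F})$, ensures that $\phi$ (respectively $f_{\infty}$) genuinely factors through the universal cover of the leaf containing its initial point, so that the Schwarz--Liouville step goes through.
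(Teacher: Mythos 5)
Your strategy is the same as the paper's (reduce to lower semicontinuity of $F=\log\|v\|_{\mathrm{Poin}}$, then use Brody's lemma on a putative divergent sequence of uniformizations and derive a contradiction from hypotheses (i) and (ii)), but there are two genuine gaps in the execution.

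\emph{Gap 1: the Montel step in the case $c\in\mathbb{R}$ is not justified.} You conclude local uniform convergence of $f_{p_n}$ from boundedness of the derivative at the single point $0$ together with compactness of $X$. Neither ingredient gives normality: for holomorphic maps into a compact, non-hyperbolic manifold, normality is equivalent to \emph{local} uniform boundedness of the derivatives, and boundedness at one point does not prevent blow-up elsewhere in $\mathbb{D}$ (e.g.\ $f_n(z)=1/(z-\tfrac12+\tfrac1n):\mathbb{D}\to\mathbb{P}^1$ has bounded $f_n(0),f_n'(0)$ but is not normal at $\tfrac12$). This is exactly why the paper does not split on the size of $|\varphi_n'(0)|$ but instead proves, once and for all, the Claim that $\|\varphi_n'\|$ is uniformly bounded on every compact $K\subset\mathbb{D}$ — with the Brody argument invoked precisely when this fails, at whatever point $t_j\in K$ realizes the failure. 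Your case $c=-\infty$ handles only the degeneration at $0$; the paper's Claim also catches degenerations away from $0$, which can occur with $c\in\mathbb{R}$. The fix is to reorganize as the paper does: show normality of the family of uniformizations (via Brody at an arbitrary point), and only afterwards use the extremal characterisation of the Poincar\'e metric to pass to the limit.

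\emph{Gap 2: the factorisation of the limit curve through a single orbifold leaf is asserted, not proved.} You identify this as ``the main obstacle'' and attribute its resolution to ``the orbifold definition of leaves, together with the codimension of $Sing(\mathcal{F})$'', but the orbifold definition by itself does \emph{not} guarantee that a parabolic end of the limit curve approaching $Sing(\mathcal{F})$ is a vanishing end in the sense of Definition \ref{vanishing}; that is a nontrivial condition requiring a meromorphic deformation of discs to nearby leaves. What actually forces it here is that the limit curve is a uniform limit of curves lying in genuine leaves, combined with the structure of the covering tube $U_T$ constructed in Section 4 — specifically the Hausdorff property (``absence of vanishing cycles'' in Proposition \ref{coveringtube}), which in turn rests on the unparametrized Hartogs lemma and the K\"ahler hypothesis. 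The paper carries out exactly this lifting argument on small discs around points of $\psi^{-1}(Sing(\mathcal{F}))$. Without this argument, the Schwarz--Liouville conclusion is unsupported. As written, your proposal gestures at the right machinery but leaves the key technical step, which is in fact where the ambient K\"ahler assumption enters, unproved.
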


\begin{proof}
Let us consider the function
$$F : U^0\rightarrow {\mathbb R} \quad ,\quad F(q)=\log\|
v(q)\|_{Poin}$$ introduced just before Proposition \ref{psh}. We
have to prove that $F$ is continuous (the continuity on the full $U$
is then a consequence of Proposition \ref{poles}). We have already
observed, during the proof of Proposition \ref{psh}, that $F$ is
upper semicontinuous, hence let us consider its lower
semicontinuity.

Take $q_\infty\in U^0$ and take a sequence $\{ q_n\}\subset U^0$
converging to $q_\infty$. For every $n$, let $\varphi_n : {\mathbb
D}\to X$ be a holomorphic map into $L_{q_n}\subset X$, sending
$0\in{\mathbb D}$ to $q_n\in L_{q_n}$. For every compact subset
$K\subset {\mathbb D}$, consider
$$I_K = \{ \| \varphi_n'(t)\| \ \vert\ t\in K, n\in{\mathbb N}\}
\subset {\mathbb R}$$ (the norm of $\varphi_n'$ is here computed
with the K\"ahler metric on $X$).

{\it Claim}: $I_K$ is a bounded subset of ${\mathbb R}$.

Indeed, in the opposite case we may find a subsequence $\{
n_j\}\subset{\mathbb N}$ and a sequence $\{ t_j\}\subset K$ such
that $\| \varphi_{n_j}'(t_j)\| \to +\infty$ as $j\to +\infty$. By
Brody's Reparametrization Lemma \cite[Ch. III]{Lan}, we may
reparametrize these discs so that they converge to an entire curve:
there exists maps $h_j : {\mathbb D}(r_j)\to {\mathbb D}$, with
$r_j\to +\infty$, such that the maps
$$\psi_j = \varphi_j \circ h_j : {\mathbb D}(r_j)\to X$$
converge, uniformly on compact subsets, to a nonconstant map
$$\psi : {\mathbb C}\to X.$$
It is clear that $\psi$
is tangent to ${\mathcal F}$, more precisely $\psi '(t)\in T_{\psi
(t)}{\mathcal F}$ whenever $\psi (t)\not\in Sing({\mathcal F})$,
because each $\psi_j$ has the same property. Moreover, by hypothesis
(ii) we have that the image of $\psi$ is not contained in
$Sing({\mathcal F})$. Therefore, $S=\psi^{-1}(Sing({\mathcal F}))$
is a discrete subset of ${\mathbb C}$, and $\psi ({\mathbb
C}\setminus S)$ is contained in some leaf $L^0$ of ${\mathcal F}^0$.

Take now $t_0\in S$. It corresponds to a parabolic end of $L^0$. On
a small compact disc $B$ centered at $t_0$, $\psi\vert_B$ is uniform
limit of $\psi_j\vert_B : B\to X$, which are maps into leaves of
${\mathcal F}$. If $U_T$ is a covering tube associated to some
transversal $T$ cutting $L^0$, then the maps $\psi_j\vert_B$ can be
lifted to $U_T$, in such a way that they converge on $\partial B$ to
some map which lifts $\psi\vert_{\partial B}$. The structure of
$U_T$ (absence of vanishing cycles) implies that, in fact, we have
convergence on the full $B$, to a map which lifts $\psi\vert_B$. By
doing so at every $t_0\in S$, we see that $\psi :{\mathbb C}\to X$
can be fully lifted to $U_T$, i.e. $\psi ({\mathbb C})$ is contained
in the leaf $L$ of ${\mathcal F}$ obtained by completion of $L^0$.
But this contradicts hypothesis (i), and proves the Claim.

The Claim implies now that, up to subsequencing, the maps $\varphi_n
:{\mathbb D}\to X$ converge, uniformly on compact subsets, to some
$\varphi_\infty : {\mathbb D}\to X$, with $\varphi_\infty (0) =
q_\infty$. As before, we obtain $\varphi_\infty ({\mathbb D})\subset
L_{q_\infty}$.

Recall now the extremal propery of the Poincar\'e metric: if we
write $\varphi_n'(0) = \lambda_n\cdot v(q_n)$, then $\|
v(q_n)\|_{Poin} \le \frac{1}{| \lambda_n| }$, and equality is
atteined if $\varphi_n$ is a uniformization of $L_{q_n}$. Hence,
with this choice of $\{ \varphi_n\}$, we see that
$$\| v(q_\infty )\|_{Poin} \le \frac{1}{| \lambda_\infty |} =
\lim_{n\to +\infty} \frac{1}{| \lambda_n| } = \lim_{n\to +\infty} \|
v(q_n)\|_{Poin}$$ i.e. $F(q_\infty )\le \lim_{n\to +\infty} F(q_n)$.
Due to the arbitrariness of the initial sequence $\{q_n\}$, this
gives the lower semicontinuity of $F$.
\end{proof}

Of course, due to hypothesis (i) such a result says nothing about
the possible closedness of $Sing({\mathcal F})\cup Parab({\mathcal
F})$, when $Parab({\mathcal F})$ is not empty, but at least it
leaves some hope. The above proof breaks down when there are
parabolic leaves, because Brody's lemma does not allow to control
where the limit entire curve $\psi$ is located: even if each
$\psi_j$ passes through $q_{n_j}$, it is still possible that $\psi$
does not pass through $q_\infty$, because the points in
$\psi_j^{-1}(q_{n_j})$ could exit from every compact subset of
${\mathbb C}$. Hence, the only hypothesis ``$L_{q_\infty}$ is
hyperbolic'' (instead of ``all the leaves are hyperbolic'') is not
sufficient to get a contradiction and prove the Claim. In other
words, the (parabolic) leaf $L$ appearing in the Claim above could
be ``far'' from $q_\infty$, but still could have some influence on
the possible discontinuity of the leafwise Poincar\'e metric at
$q_\infty$.

The subset $Sing({\mathcal F})\cup Parab({\mathcal F})$ being
complete pluripolar, a natural question concerns the computation of
its Lelong numbers. For instance, if these Lelong numbers were
positive, then, by Siu Theorem \cite{Dem}, we should get that
$Sing({\mathcal F})\cup Parab({\mathcal F})$ is a countable union of
analytic subsets, a substantial step toward the conjecture above.
However, we generally expect that these Lelong numbers are zero,
even when $Sing({\mathcal F})\cup Parab({\mathcal F})$ is analytic.

\begin{example} \label{turbulent} {\rm
Let $E$ be an elliptic curve and let $X={\mathbb P}\times E$. Let
$\alpha = f(z)dz$ be a meromorphic 1-form on ${\mathbb P}$, with
poles $P=\{ z_1,\ldots ,z_k\}$ of orders $\{ \nu_1,\ldots ,\nu_k\}$.
Consider the (nonsingular) foliation ${\mathcal F}$ on $X$ defined
by the (saturated) Kernel of the meromorphic 1-form $\beta = f(z)dz
- dw$, i.e. by the differential equation $\frac{dw}{dz}=f(z)$. Then
each fiber $\{ z_j\}\times E$, $z_j\in P$, is a leaf of ${\mathcal
F}$, whereas each other fiber $\{ z\}\times E$, $z\not\in P$, is
everywhere transverse to ${\mathcal F}$. In \cite{Br1} such a
foliation is called {\it turbulent}. Outside the elliptic leaves
$P\times E$, every leaf is a regular covering of ${\mathbb
P}\setminus P$, by the projection $X\to {\mathbb P}$. Hence, if
$k\ge 3$ then these leaves are hyperbolic, and their Poincar\'e
metric coincides with the pull-back of the Poincar\'e metric on
${\mathbb P}\setminus P$.

Take a point $(z_j,w)\in P\times E = Parab({\mathcal F})$. Around
it, the foliation is generated by the holomorphic and nonvanishing
vector field $v = f(z)^{-1}\frac{\partial}{\partial z} +
\frac{\partial}{\partial w}$, whose $z$-component has at $z=z_j$ a
zero of order $\nu_j$. The weight $F=\log\| v\|_{Poin}$ is nothing
but than the pull-back of $\log\| f(z)^{-1}\frac{\partial}{\partial
z}\|_{Poin}$, where the norm is measured in the Poincar\'e metric of
${\mathbb P}\setminus P$. Recalling that the Poincar\'e metric of
the punctured disc ${\mathbb D}^*$ is $\frac{idz\wedge d\bar
z}{|z|^2 (\log |z|^2)^2}$, we see that $F$ is something like
$$\log |z-z_j|^{\nu_j-1} - \log \big| \log |z-z_j|^2 \big| .$$
Hence the Lelong number along $\{ z_j\}\times E$ is positive if and
only if $\nu_j\ge 2$, which can be considered as an ``exceptional''
case; in the ``generic'' case $\nu_j=1$ the pole of $F$ along $\{
z_j\}\times E$ is a weak one, with vanishing Lelong number.}
\end{example}

\begin{remark} {\rm
We used the convexity property stated by Theorem \ref{convexity} as
a substitute of the Stein property required by the results of
Nishino, Yamaguchi, Kizuka discussed in Section 2. One could ask if,
after all, such a convexity property can be used to prove the
Steinness of $U_T$, when $T$ is Stein. If the ambient manifold $X$
is Stein, instead of K\"ahler compact, Il'yashenko proved in
\cite{Il1} and \cite{Il2} (see Section 2) that indeed $U_T$ is
Stein, using Cartan-Thullen-Oka convexity theory over Stein
manifolds. See also \cite{Suz} for a similar approach to $V_T$,
\cite{Br6} for some result in the case of projective manifolds,
close in spirit to \cite{Il2}, and \cite{Nap} and \cite{Ohs} for
related results in the case of proper fibrations by curves.

For instance, suppose that all the fibers of $U_T$ are hyperbolic,
and that the fiberwise Poincar\'e metric is of class $C^2$. Then we
can take the function $\psi : U_T\to{\mathbb R}$ defined by $\psi =
\psi_0 +\varphi\circ P_T$, where $\psi_0(q)$ is the squared
hyperbolic distance (in the fiber) between $q$ and the basepoint
$p_T(P_T(q))$, and $\varphi :T\to{\mathbb R}$ is a strictly
plurisubharmonic exhaustion of $T$. A computation shows that $\psi$
is strictly plurisubharmonic (thanks to the plurisubharmonic
variation of the fiberwise Poincar\'e metric on $U_T$), and being
also exhaustive we deduce that $U_T$ is Stein. Probably, this can be
done also if the fiberwise Poincar\'e metric is less regular, say
$C^0$. But when there are parabolic fibers such a simple argument
cannot work, because $\psi$ is no more exhaustive (one can try
perhaps to use a renormalization argument like the one used in the
proof of Theorem \ref{nishino}). However, if {\it all} the fibers
are parabolic then we shall see later that $U_T$ is a product
$T\times{\mathbb C}$ (if $T$ is small), and hence it is Stein.

A related problem concerns the existence on $U_T$ of holomorphic
functions which are not constant on the fibers. By Corollary
\ref{pseudoeffective}, $K_{\mathcal F}$ is pseudoeffective, if
${\mathcal F}$ is hyperbolic. Let us assume a little more, namely
that it is effective. Then any nontrivial section of $K_{\mathcal
F}$ over $X$ can be lifted to $U_T$, giving a holomorphic section of
the relative canonical bundle of the fibration. As in Lemmata
\ref{separability} and \ref{connectivity}, this section can be
integrated along the (simply connected and pointed) fibers, giving a
holomorphic function on $U_T$ not constant on generic fibers.}
\end{remark}

\section{Extension of meromorphic maps from line bundles}

In order to generalize Corollary \ref{pseudoeffective} to cover
(most) parabolic foliations, we need an extension theorem for
certain meromorphic maps. This is done in the present Section,
following \cite{Br5}.

\subsection{Volume estimates}

Let us firstly recall some results of Dingoyan \cite{Din}, in a
slightly simplified form due to our future use.

Let $V$ be a connected complex manifold, of dimension $n$, and let
$\omega$ be a smooth closed semipositive $(1,1)$-form on $V$ (e.g.,
the pull-back of a K\"ahler form by some holomorphic map from $V$).
Let $U\subset V$ be an open subset, with boundary $\partial U$
compact in $V$. Suppose that the mass of $\omega^n$ on $U$ is
finite: $\int_U\omega^n < +\infty$. We look for some condition
ensuring that also the mass on $V$ is finite: $\int_V\omega^n <
+\infty$. In other words, we look for the boundedness of the
$\omega^n$-volume of the ends $V\setminus U$.

Set
$$P_\omega (V,U) = \{ \varphi : V\to [-\infty ,+\infty )\ {\rm u.s.c}
\ \vert\ dd^c\varphi +\omega \ge 0,\ \varphi\vert_U\le 0\}$$ where
u.s.c. means upper semicontinuous, and the first inequality is in
the sense of currents. This first inequality defines the so-called
{\it $\omega$-plurisubharmonic functions}. Note that locally the
space of $\omega$-plurisubharmonic functions can be identified with
a translation of the space of the usual plurisubharmonic functions:
locally the form $\omega$ admits a smooth potential $\phi$ ($\omega
= dd^c\phi$), and so $\varphi$ is $\omega$-plurisubharmonic is and
only if $\varphi + \phi$ is plurisubharmonic. In this way, most
local problems on $\omega$-plurisubharmonic functions can be reduced
to more familiar problems on plurisubharmonic functions.

Remark that the space $P_\omega (V,U)$ is not empty, for it contains
at least all the constant nonpositive functions on $V$.

Suppose that $P_\omega (V,U)$ satisfies the following condition:
\begin{enumerate}
\item[(A)] the functions in $P_\omega (V,U)$ are locally uniformly
bounded from above: for every $z\in V$ there exists a neighbourhood
$V_z\subset V$ of $z$ and a constant $c_z$ such that
$\varphi\vert_{V_z}\le c_z$ for every $\varphi\in P_\omega (V,U)$.
\end{enumerate}
Then we can introduce the upper envelope
$$\Phi (z) = \sup_{\varphi\in P_\omega (V,U)}\varphi (z) \qquad
\forall z\in V$$ and its upper semicontinuous regularization
$$\Phi^*(z) = \limsup_{w\to z}\Phi (w)\qquad \forall z\in V.$$
The function
$$\Phi^* : V\to [0,+\infty )$$
is identically zero on $U$, upper semicontinuous, and
$\omega$-plurisubharmonic (Brelot-Cartan \cite{Kli}), hence it
belongs to the space $P_\omega (V,U)$. Moreover, by results of
Bedford and Taylor \cite{BeT} \cite{Kli} the wedge product
$(dd^c\Phi^* +\omega )^n$ is well defined, as a locally finite
measure on $V$, and it is identically zero outside $\overline U$:
$$(dd^c\Phi^* +\omega )^n \equiv 0\qquad {\rm on}\
V\setminus\overline U.$$

Indeed, let $B\subset V\setminus\overline U$ be a ball around which
$\omega$ has a potential. Let $P_\omega (B,\Phi^*)$ be the space of
$\omega$-plurisubharmonic functions $\psi$ on $B$ such that
$\limsup_{z\to w}\psi (z)\le \Phi^*(w)$ for every $w\in\partial B$.
Let $\Psi^*$ be the regularized upper envelope of the family
$P_\omega (B,\Phi^*)$ (which is bounded from above by the maximum
principle). Remark that $\Phi^*\vert_B$ belongs to $P_\omega
(B,\Phi^*)$, and so $\Psi^*\ge \Phi^*$ on $B$. By \cite{BeT},
$\Psi^*$ satisfies the homogeneous Monge-Amp\`ere equation
$(dd^c\Psi^* +\omega )^n =0$ on $B$, with Dirichlet boundary
condition $\limsup_{z\to w}\Psi^*(z) = \Phi^*(w)$, $w\in\partial B$
(``balayage''). Then the function $\widetilde\Phi^*$ on $V$, which
is equal to $\Psi^*$ on $B$ and equal to $\Phi^*$ on
$V\setminus\overline B$, still belongs to $P_\omega (V,U)$, and it
is everywhere not smaller than $\Phi^*$. Hence, by definition of
$\Phi^*$, we must have $\widetilde\Phi^* = \Phi^*$, i.e.
$\Phi^*=\Psi^*$ on $B$ and so $\Phi^*$ satisfies the homogeneous
Monge-Amp\`ere equation on $B$.

Suppose now that the following condition is also satisfied:
\begin{enumerate}
\item[(B)] $\Phi^*:V\rightarrow [0,+\infty )$ is exhaustive on
$V\setminus U$: for every $c > 0$, the subset $\{ \Phi^* <
c\}\setminus U$ is relatively compact in $V\setminus U$.
\end{enumerate}
Roughly speaking, this means that the function $\Phi^*$ solves on
$V\setminus \overline U$ the homogeneous Monge-Amp\`ere equation,
with boundary conditions 0 on $\partial U$ and $+\infty$ on the
``boundary at infinity'' of $V\setminus U$.

\begin{theorem} \label{dingoyan} \cite{Din}
Under assumptions (A) and (B), the $\omega^n$-volume of $V$ is
finite:
$$\int_V \omega^n < +\infty .$$
\end{theorem}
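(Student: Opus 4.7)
I would deduce the theorem from two steps: (1) the Monge--Amp\`ere measure $\mu := (dd^c\Phi^*+\omega)^n$ has finite total mass $\mu(V)$; (2) the uniform estimate $\int_{V_c}\omega^n\le \mu(V)$ holds on each sublevel set $V_c:=\{\Phi^*<c\}$. Since the exhaustivity assumption (B) gives $V=\bigcup_{c>0}V_c$, monotone convergence then yields $\int_V\omega^n\le\mu(V)<+\infty$.

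For Step (1), the measure $\mu$ vanishes on $V\setminus\overline U$ and is thus supported in $\overline U=U\cup\partial U$. On the open set $U$ one has $\Phi^*\equiv 0$: the constant function $0$ belongs to $P_\omega(V,U)$, forcing $\Phi^*\ge 0$, while $\Phi^*\le 0$ on $U$ by the very definition of the class $P_\omega(V,U)$. Hence $\mu|_U=\omega^n|_U$ has finite mass by hypothesis. Since $\partial U$ is compact and $\Phi^*$ is upper semicontinuous and nonnegative, $\Phi^*$ is locally bounded near $\partial U$, and the Chern--Levine--Nirenberg estimates of Bedford--Taylor theory \cite{BeT} yield local finiteness of $\mu$ near $\partial U$. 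Altogether $\mu(V)<+\infty$.

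For Step (2), on $V\setminus\overline U$ the vanishing $(dd^c\Phi^*+\omega)^n=0$ expands, via the algebraic identity $a^n-b^n=(a-b)\sum_{k=0}^{n-1}a^kb^{n-1-k}$ with $a=\omega$, $b=dd^c\Phi^*+\omega$, to the identity of currents $\omega^n = -\,dd^c\Phi^* \wedge T$, where $T:=\sum_{k=0}^{n-1}(dd^c\Phi^*+\omega)^k\wedge\omega^{n-1-k}$ is a closed positive $(n-1,n-1)$-current (well-defined because $\Phi^*$ is locally bounded on $V\setminus\overline U$). The plan is to integrate this identity over the relatively compact open set $V_c\setminus\overline U$ by means of a Bedford--Taylor/Stokes argument applied to the bounded $\omega$-plurisubharmonic truncation $u_c:=\min(\Phi^*,c)$, which coincides with $\Phi^*$ on $V_c$ and equals the constant $c$ on the outer level $\{\Phi^*=c\}$. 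The outer-boundary contribution in the Stokes formula vanishes because $u_c\equiv c$ is constant there, while the inner boundary contribution along $\partial U$ is absorbed by the already-controlled finite mass $\int_{\overline U}\mu$. Splitting $V_c=U\cup(V_c\setminus\overline U)$ and using $\mu|_U=\omega^n|_U$ together with $\mu|_{V_c\setminus\overline U}=0$ then yields the desired bound $\int_{V_c}\omega^n\le\mu(V)$.

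The main technical obstacle is the rigorous execution of the Stokes-type integration by parts in Step (2): the level set $\{\Phi^*=c\}$ need not be smooth, $\Phi^*$ is only upper semicontinuous, and the naive comparison principle on the whole of $V_c$ is not applicable because the boundary condition $u_c=c$ fails on the $\partial U$ portion of $\partial V_c$. The remedy, standard in pluripotential theory, is to approximate $\Phi^*$ by a decreasing sequence of smooth $\omega$-plurisubharmonic functions on relatively compact subdomains and invoke the continuity of the complex Monge--Amp\`ere operator along monotone limits \cite{BeT}, justifying the Stokes identity in the limit. Once the uniform bound of Step (2) is established, letting $c\to+\infty$ with monotone convergence concludes the proof.
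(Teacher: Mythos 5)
Your Step (1), establishing $\mu(V)<+\infty$ via the decomposition $V=U\cup\partial U\cup(V\setminus\overline U)$ and the vanishing of the Monge--Amp\`ere measure outside $\overline U$, is exactly the paper's preliminary observation, and is fine. Your Step (2) is also in the right spirit: the paper's key Lemma is precisely the inequality $\int_V\omega^n\le\int_V(dd^c\Phi^*+\omega)^n$, and, like you, the paper reduces it to the nonnegativity of $\int_V dd^c\Phi^*\wedge\eta$ (there done term by term for $\eta=(dd^c\Phi^*+\omega)^k\wedge\omega^{n-k-1}$, which telescopes to your single identity with $T$). So the overall architecture matches.

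The genuine gap lies in the execution of the Stokes argument. You restrict to $V_c\setminus\overline U$ and truncate by $u_c=\min(\Phi^*,c)$, which forces you to handle two boundary contributions; both are asserted rather than proved, and neither assertion is safe. For the outer level $\{\Phi^*=c\}$, the fact that $u_c$ is constant there does \emph{not} make the boundary term vanish: after one integration by parts the boundary integrand involves $d^c u_c$ (or $d^c\Phi^*$) restricted to $\{\Phi^*=c\}$, and the $d^c$-derivative of a function is not tangentially zero along its own level set; moreover $u_c$ is only Lipschitz and $\{\Phi^*=c\}$ may be highly irregular, so this requires a real argument. For the inner boundary $\partial U$, the term produced by Stokes is of the form $\int_{\partial U}d^c\Phi^*\wedge T$, which is a completely different object from the Monge--Amp\`ere mass $\mu(\partial U)=(dd^c\Phi^*+\omega)^n(\partial U)$; there is no reason one is ``absorbed'' by the other, and you give none.

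The paper avoids boundary terms altogether, and this is the idea your proposal is missing. Instead of a sharp spatial cutoff, it composes $\Phi^*$ with smooth nonincreasing functions $\chi_m:\mathbb R\to[0,1]$ equal to $1$ on $(-\infty,m]$ and to $0$ on $[m+1,+\infty)$, and considers $I_m=\int_V(\chi_m\circ\Phi^*)\,dd^c\Phi^*\wedge\eta$. Assumption (B) forces $\{\Phi^*<m+1\}\setminus U$ to be relatively compact, while on $U$ the measure $dd^c\Phi^*\wedge\eta$ is identically zero because $\Phi^*\equiv 0$ there; hence the integrand is compactly supported in $V$ and Stokes applies with \emph{no} boundary contribution, yielding $I_m=-\int_V(\chi_m'\circ\Phi^*)\,d\Phi^*\wedge d^c\Phi^*\wedge\eta\ge 0$ since $\chi_m'\le 0$ and $d\Phi^*\wedge d^c\Phi^*\wedge\eta$ is a positive measure. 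Letting $m\to+\infty$ gives $\int_V dd^c\Phi^*\wedge\eta\ge 0$, and concatenation of these inequalities proves the lemma. If you replace your sharp truncation $u_c$ and localization to $V_c\setminus\overline U$ by this compactly supported \emph{smooth} cutoff in the value of $\Phi^*$, the boundary-term difficulties disappear and your plan goes through.
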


\begin{proof}
The idea is that, using $\Phi^*$, we can push all the mass of
$\omega^n$ on $V\setminus U$ to the compact set $\partial U$. Note
that we certainly have
$$\int_V (dd^c\Phi^* +\omega )^n < +\infty$$
because, after decomposing $V= U\cup (V\setminus U)\cup \partial U$:
(i) $\Phi^*\equiv 0$ on $U$, and $\int_U\omega^n$ is finite by
standing assumptions; (ii) $(dd^c\Phi^* +\omega )^n\equiv 0$ on
$V\setminus U$; (iii) $\partial U$ is compact (but, generally
speaking, $\partial U$ is charged by the measure $(dd^c\Phi^*+\omega
)^n$).

Hence the theorem follows from the next inequality.

\begin{lemma}\label{inequality} \cite[Lemma 4]{Din}
$$\int_V \omega^n \le \int_V (dd^c\Phi^* +\omega )^n .$$
\end{lemma}

\begin{proof}
More generally, we shall prove that for every $k=0,\ldots ,n-1$:
$$\int_V (dd^c\Phi^* +\omega )^{k+1}\wedge\omega^{n-k-1} \ge \int_V
(dd^c\Phi^* +\omega )^k\wedge\omega^{n-k} ,$$ so that the desired
inequality follows by concatenation. We can decompose the integral
on the left hand side as
$$\int_V (dd^c\Phi^* +\omega )^k\wedge\omega^{n-k} + \int_V
dd^c\Phi^*\wedge (dd^c\Phi^* +\omega )^k\wedge\omega^{n-k-1}$$ and
so we need to prove that, setting $\eta =(dd^c\Phi^* +\omega
)^k\wedge\omega^{n-k-1}$, we have
$$I = \int_V dd^c\Phi^*\wedge\eta \ge 0 .$$
Here all the wedge products are well defined, because $\Phi^*$ is
locally bounded, and moreover $\eta$ is a closed positive current of
bidegree $(n-1,n-1)$ \cite{Kli}.

Take a sequence of smooth functions $\chi_n : {\mathbb R}\to [0,1]$,
$n\in{\mathbb N}$, such that $\chi_n(t)=1$ for $t\le n$,
$\chi_n(t)=0$ for $t\ge n+1$, and $\chi_n'(t)\le 0$ for every $t$.
Thus, for every $z\in V$ we have $(\chi_n\circ\Phi^*)(z)=0$ for
$n\le\Phi^*(z)-1$ and $(\chi_n\circ\Phi^*)(z)=1$ for
$n\ge\Phi^*(z)$. Hence it is sufficient to prove that
$$I_n = \int_V (\chi_n\circ\Phi^*)\cdot dd^c\Phi^*\wedge\eta \ge 0$$
for every $n$. By assumption (B), the support of $\chi_n\circ\Phi^*$
intersects $V\setminus U$ along a compact subset. Moreover, $\Phi^*$
is identically zero on $U$. Thus, the integrand above has compact
support in $V$. Hence, by Stokes formula,
$$I_n = -\int_V d(\chi_n\circ\Phi^*)\wedge d^c\Phi^*\wedge\eta =
-\int_V (\chi_n'\circ\Phi^*)\cdot d\Phi^*\wedge d^c\Phi^*\wedge\eta
.$$ Now, $d\Phi^*\wedge d^c\Phi^*$ is a positive current, and its
product with $\eta$ is a positive measure. From $\chi_n'\le 0$ we
obtain $I_n\ge 0$, for every $n$.
\end{proof}
This inequality completes the proof of the theorem.
\end{proof}

\subsection{Extension of meromorphic maps}

As in \cite[\S 6]{Din}, we shall use the volume estimate of Theorem
\ref{dingoyan} to get an extension theorem for certain meromorphic
maps into K\"ahler manifolds.

Consider the following situation. It is given a compact connected
K\"ahler manifold $X$, of dimension $n$, and a line bundle $L$ on
$X$. Denote by $E$ the total space of $L$, and by $\Sigma\subset E$
the graph of the null section of $L$. Let $U_\Sigma\subset E$ be a
connected (tubular) neighbourhood of $\Sigma$, and let $Y$ be
another compact K\"ahler manifold, of dimension $m$.

\begin{theorem} \label{extension} \cite{Br5}
Suppose that $L$ is not pseudoeffective. Then any meromorphic map
$$f : U_\Sigma \setminus\Sigma \dashrightarrow Y$$
extends to a meromorphic map
$$\bar f : U_\Sigma \dashrightarrow Y .$$
\end{theorem}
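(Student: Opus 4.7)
The plan is to reduce the extension of $f$ across $\Sigma$ to a locally finite volume estimate for the closure of its graph, and to deduce this estimate from Dingoyan's Theorem~\ref{dingoyan}; the non-pseudoeffectivity of $L$ is precisely what will ensure the boundedness hypothesis (A) of that theorem.

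First, let $\Gamma\subset(U_\Sigma\setminus\Sigma)\times Y$ denote the closure of the graph of $f$ over its holomorphy locus; it is an $(n+1)$-dimensional analytic subset. Let $\bar\Gamma$ be its topological closure in $U_\Sigma\times Y$. Once $\bar\Gamma$ is shown to be analytic in $U_\Sigma\times Y$, the first projection $\bar\Gamma\to U_\Sigma$ gives the desired meromorphic extension $\bar f$. By Bishop's extension theorem for analytic sets of locally finite volume, $\bar\Gamma$ is analytic as soon as its $(2n+2)$-dimensional volume is locally finite near $\Sigma\times Y$. So the task is, for each relatively compact neighbourhood $W$ of a point of $\Sigma$ in $U_\Sigma$, to bound the volume of $\Gamma\cap(W\times Y)$ with respect to a fixed K\"ahler form on $U_\Sigma\times Y$.

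To obtain such a bound I would apply Theorem~\ref{dingoyan}, taking $V$ to be a desingularization of the relatively compact analytic set $\bar\Gamma\cap(W\times Y)$, taking the open set $U\subset V$ of that theorem to be the piece sitting over $W\setminus\Sigma$, and taking $\omega$ to be the pull-back to $V$ of the K\"ahler form $\pi^*\omega_X+\omega_Y$ on $U_\Sigma\times Y$. Since $U$ is contained in the smooth locus of $\Gamma$ and is relatively compact inside the full graph, the finite-mass hypothesis $\int_U\omega^{n+1}<+\infty$ is automatic, and the conclusion $\int_V\omega^{n+1}<+\infty$ is exactly the desired volume estimate. Hypothesis (B), i.e.\ exhaustiveness of the envelope $\Phi^*$ on $V\setminus U$, should follow by an appropriate choice of $W$ making $V\setminus U$ relatively compact in $V$, together with a barrier argument using the geometry of $\bar\Gamma$ in the direction transverse to $\Sigma$.

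The real obstacle is hypothesis (A), the local uniform boundedness of the family $P_\omega(V,U)$, and I expect it to absorb most of the work. This is where the non-pseudoeffectivity of $L$ must be invoked, by an argument by contradiction. If (A) failed, one could pick $\varphi_j\in P_\omega(V,U)$ with $\sup_K\varphi_j\to+\infty$ on some compact $K\subset V\setminus U$, renormalize, and extract a nontrivial limit $\varphi_\infty$ in the cone of $\omega$-plurisubharmonic functions on $V$. The goal would then be to transport the associated positive current $dd^c\varphi_\infty+\omega$, via the projections $V\to\bar\Gamma\to U_\Sigma\subset E$ and $\pi:E\to X$, into a nontrivial closed positive $(1,1)$-current on $X$ whose cohomology class is $c_1(L)$; equivalently, a singular hermitian metric on $L$ of semi-positive curvature. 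This would contradict the non-pseudoeffectivity of $L$ and thereby establish (A). Making this transport clean---in particular arranging that the resulting current on $X$ is nonzero and genuinely represents $c_1(L)$ once the Poincar\'e--Lelong contribution of the zero section is properly accounted for---is the delicate technical heart of the argument.
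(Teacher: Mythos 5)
Your overall skeleton matches the paper: reduce the extension of $f$ to a locally finite volume bound for the graph near $\Sigma$, get that bound from Theorem~\ref{dingoyan}, and conclude by Bishop's extension theorem. You also correctly identify that condition (A) is where the non-pseudoeffectivity of $L$ must enter. But there are two issues, one of setup and one of substance.

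On setup: you propose to take $V$ to be ``a desingularization of the relatively compact analytic set $\bar\Gamma\cap(W\times Y)$''. The set $\bar\Gamma\cap(W\times Y)$ is not known to be analytic --- its analyticity is precisely the conclusion to be reached --- so one cannot desingularize it, and if one could, $V$ would be compact and condition (A) would be vacuous. The correct configuration (and the one the paper uses) is: $V=\Gamma_f$, a resolution of the graph of $f$ over the punctured neighbourhood $U_\Sigma\setminus\Sigma$, with $\pi:\Gamma_f\to U_\Sigma\setminus\Sigma$; $U$ is the preimage of a shell $U_\Sigma\setminus\overline{U_\Sigma'}$ near the \emph{outer} boundary of $U_\Sigma$; and the ``end at infinity'' $V\setminus U$ is the noncompact part of the graph accumulating on $\Sigma$. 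Condition (B) is then checked exactly as you suggest, with the barrier $\psi=-\log\mathrm{dist}(\cdot,\Sigma)$ and a Takeuchi-type estimate $dd^c\psi\ge -C\omega_0$.

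On substance: for condition (A) you propose a compactness--renormalization argument by contradiction, producing a positive $(1,1)$-current on $X$ in the class $c_1(L)$. This is only a sketch, and the step you yourself flag as ``the delicate technical heart'' --- transporting the limiting current from $\Gamma_f$ down to a genuine singular metric of nonnegative curvature on $L$ --- is not merely technical: nothing in the renormalized limit lives in or produces the class $c_1(L)$ in any obvious way. The paper uses an entirely different mechanism, which you should be aware of because it is the real content of the theorem. One first proves a criterion (Lemma~\ref{criterion}): $L$ is pseudoeffective if and only if the total space $E^*$ of $L^*$ admits a nontrivial locally pseudoconvex neighbourhood $W\neq E^*$ of its null section $\Sigma^*$. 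Then one observes (following Dingoyan) that the set $\Omega\subset\Gamma_f$ where the family $P_\omega(\Gamma_f,U)$ is locally uniformly bounded above is \emph{locally pseudoconvex}. Pushing $\Omega$ down to $U_\Sigma\setminus\Sigma$ gives a locally pseudoconvex open set $\Omega_0\supset U_0$, and using the canonical isomorphism $E\setminus\Sigma\simeq E^*\setminus\Sigma^*$ (exchanging the zero section with the section at infinity), $\Omega_0\cup(E\setminus U_\Sigma)\cup\Sigma^*$ becomes a locally pseudoconvex neighbourhood of $\Sigma^*$ in $E^*$. Non-pseudoeffectivity of $L$ then forces, via the criterion, this neighbourhood to be all of $E^*$, i.e.\ $\Omega_0=U_\Sigma\setminus\Sigma$, whence $\Omega=\Gamma_f$ by the maximum principle across the exceptional set of the resolution. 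This replaces your contradiction argument by a clean geometric equivalence and avoids the ``transport'' difficulty altogether. Without Lemma~\ref{criterion} and the local pseudoconvexity of $\Omega$, your verification of (A) is a genuine gap.
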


Before the proof, let us make a link with \cite{BDP}. In the special
case where $X$ is projective, and not only K\"ahler, the non
pseudoeffectivity of $L$ translates into the existence of a covering
family of curves $\{ C_t\}_{t\in B}$ on $X$ such that $L\vert_{C_t}$
has negative degree for every $t\in B$ \cite{BDP}. This means that
the normal bundle of $\Sigma$ in $E$ has negative degree on every
$C_t\subset\Sigma\simeq X$. Hence the restriction of $E$ over $C_t$
is a surface $E_t$ which contains a compact curve $\Sigma_t$ whose
selfintersection is negative, and thus contractible to a normal
singularity. By known results \cite{Siu} \cite{Iv1}, every
meromorphic map from $U_t\setminus\Sigma_t$ ($U_t$ being a
neighbourhood of $\Sigma_t$ in $E_t$) into a compact K\"ahler
manifold can be meromorphically extended to $U_t$. Because the
curves $C_t$ cover the full $X$, this is sufficient to extends from
$U_\Sigma\setminus \Sigma$ to $U_\Sigma$.

Of course, if $X$ is only K\"ahler then such a covering family of
curves could not exist, and we need a more global approach, which
avoids any restriction to curves. Even in the projective case, this
seems a more natural approach than evoking \cite{BDP}.

\begin{proof}
We begin with a simple criterion for pseudoeffectivity, analogous to
the well known fact that a line bundle is ample if and only if its
dual bundle has strongly pseudoconvex neighbourhoods of the null
section. Recall that an open subset $W$ of a complex manifold $E$ is
{\it locally pseudoconvex in} $E$ if for every $w\in\partial W$
there exists a neighbourhood $U_w\subset E$ of $w$ such that $W\cap
U_w$ is Stein.

\begin{lemma}\label{criterion}
Let $X$ be a compact connected complex manifold and let $L$ be a
line bundle on $X$. The following two properties are equivalent:
\begin{enumerate}
\item[(i)] $L$ is pseudoeffective;
\item[(ii)] in the total space $E^*$ of the dual line bundle $L^*$
there exists a neighbourhood $W\not= E^*$ of the null section
$\Sigma^*$ which is locally pseudoconvex in $E^*$.
\end{enumerate}
\end{lemma}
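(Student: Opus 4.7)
The plan is to prove the two implications separately, realising the standard correspondence between singular hermitian metrics of positive curvature on $L$ and pseudoconvex tubular neighbourhoods of the null section in the total space of $L^*$.

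For $(i)\Rightarrow(ii)$: a positive-curvature singular hermitian metric on $L$ is given by local psh weights $\varphi_j$ with $\varphi_j-\varphi_k=\log|g_{jk}|$, where $\{g_{jk}\}$ is a cocycle defining $L$. Using fibre coordinates $(z,\xi_j)$ on $E^*$, which transform by $\xi_j=g_{jk}^{-1}\xi_k$, the dual squared norm is $|\xi_j|^2 e^{2\varphi_j(z)}$, so the function
$$\psi(v)=\log|\xi_j|+\varphi_j(z)$$
is globally well-defined on $E^*$, upper semicontinuous, and plurisubharmonic (sum of a pluriharmonic contribution in the fibre and a psh one in the base). The sublevel set $W=\{\psi<0\}$ is then pseudoconvex, hence locally pseudoconvex, and contains a tube around $\Sigma^*$ because, $X$ being compact, the $\varphi_j$ are locally bounded above; it is proper in $E^*$ since $\psi$ diverges to $+\infty$ along every non-zero fibre direction.

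For $(ii)\Rightarrow(i)$: the idea is to recover a psh weight by measuring the largest inscribed disc in each fibre. In a trivialisation over a Stein open $U_j\subset X$ with fibre coordinate $\xi_j$, the intersection $W\cap(U_j\times\mathbb{C})$ is locally pseudoconvex in $U_j\times\mathbb{C}$, hence pseudoconvex, and contains the null section. Set
$$R_j(z)=\sup\{r>0 : D(0,r)\subset W_z\}\in(0,+\infty].$$
The circular hull $\widehat W_j=\{(z,\xi_j):(z,e^{i\theta}\xi_j)\in W \text{ for all } \theta\}$ is the intersection of the rotated copies of $W$, each of which is biholomorphically equivalent to $W$ and hence pseudoconvex; the connected component of $\widehat W_j$ containing the null section is precisely the Hartogs tube $\{|\xi_j|<R_j(z)\}$. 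The Hartogs/Oka theorem (in the form recalled in Section~2 for Hartogs tubes) then forces $\chi_j:=-\log R_j$ to be plurisubharmonic on $U_j$. The transition $\xi_j=g_{jk}^{-1}\xi_k$ gives $R_k=|g_{jk}|R_j$, and hence $\chi_j-\chi_k=\log|g_{jk}|$, so the $\chi_j$ glue to a singular hermitian metric of positive curvature on $L$. Connectedness of $X$ together with $W\neq E^*$ rules out $\chi_j\equiv-\infty$ on any $U_j$: if $R_j\equiv+\infty$ on some $U_j$, then $W\supset U_j\times\mathbb{C}$, and the cocycle relation propagates this to all overlapping charts, eventually exhausting $X$, contradicting $W\neq E^*$.

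The main technical point is the plurisubharmonicity of $\chi_j$. The cleanest route is by complex-line slicing: for every affine complex line $\ell\subset U_j$, the slice $W\cap(\ell\times\mathbb{C})$ is a pseudoconvex open subset of $\mathbb{C}^2$ containing a neighbourhood of $\ell\times\{0\}$, and the two-dimensional Hartogs theorem gives subharmonicity of $\chi_j\vert_\ell$, from which plurisubharmonicity of $\chi_j$ on $U_j$ follows. The only subtlety beyond this is the need to pass from $W$ to its circular hull $\widehat W_j$ before applying Hartogs, which rests on the standard fact that intersections of families of pseudoconvex open sets are again (locally) pseudoconvex.
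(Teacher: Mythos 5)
Your proof is correct and follows the same route as the paper's: for (i)$\Rightarrow$(ii) the set $W=\{\psi<0\}$ you construct is exactly the paper's unit ball of the dual metric, and for (ii)$\Rightarrow$(i) you take the $\mathbb S^1$-averaged (circular) hull, extract the disc component over each fibre, and read off the weight from the radius of the largest inscribed disc. The only differences are cosmetic: you work chart-by-chart with $R_j$ and glue via the cocycle, whereas the paper works globally on $E^*$ with $W'$ and $W''$.

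One small caveat worth flagging: you identify the circular-hull step as ``the only subtlety,'' but the real place where pseudoconvexity enters is the assertion that the Hartogs tube $\{|\xi_j|<R_j(z)\}$ is actually a \emph{connected component} of $\widehat W_j$ (equivalently, that for varying $z$ the annular pieces of the fibres cannot merge with the central disc). Without pseudoconvexity this fails outright; with it, the merging of a disc and an annulus across a range of $z$ produces a Hartogs figure, contradicting pseudoconvexity. The paper makes this point explicitly (``using the local pseudoconvexity of $W'$ \ldots these annuli and discs cannot merge''), while you state the conclusion without justification. This is not a fatal gap, since the paper doesn't spell it out either; but if you want to avoid the component-extraction claim entirely, note that the Hartogs theorem on boundary-distance functions already gives that $-\log d_v$ is plurisubharmonic on $\widehat W_j$ for the vertical direction $v$, and its restriction to the null section is precisely $-\log R_j$ — so plurisubharmonicity of $\chi_j$ follows directly from pseudoconvexity of $\widehat W_j$, with no need to argue that the tube is a full connected component.
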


\begin{proof}
The implication (i) $\Rightarrow$ (ii) is quite evident. If $h$ is a
(singular) hermitian metric on $L$ with positive curvature
\cite{Dem}, then in a local trivialization $E\vert_{U_j}\simeq
U_j\times{\mathbb C}$ the unit ball is expressed by $\{ (z,t)\
\vert\ |t| < e^{h_j(z)}\}$, where $h_j:U_j\to [-\infty ,+\infty )$
is the plurisubharmonic weight of $h$. In the dual local
trivialization $E^*\vert_{U_j}\simeq U_j\times{\mathbb C}$, the unit
ball of the dual metric is expressed by $\{ (z,s)\ \vert\ |s| <
e^{-h_j(z)}\}$. The plurisubharmonicity of $h_j$ gives (and is
equivalent to) the Steinness of such an open subset of
$U_j\times{\mathbb C}$ (recall Hartogs Theorem on Hartogs Tubes
mentioned in Section 2). Hence we get (ii), with $W$ equal to the
unit ball in $E^*$.

The implication (ii) $\Rightarrow$ (i) is not more difficult. Let
$W\subset E^*$ be as in (ii). On $E^*$ we have a natural ${\mathbb
S}^1$-action, which fixes $\Sigma^*$ and rotates each fiber. For
every $\vartheta\in{\mathbb S}^1$, let $W_\vartheta$ be the image of
$W$ by the action of $\vartheta$. Then
$$W' = \cap_{\vartheta\in{\mathbb S}^1} W_\vartheta$$
is still a nontrivial locally pseudoconvex neighbourhood of
$\Sigma^*$, for local pseudoconvexity is stable by intersections.
For every $z\in X$, $W'$ intersects the fiber $E_z^*$ along an open
subset which is ${\mathbb S}^1$-invariant, a connected component of
which is therefore a disc $W_z^0$ centered at the origin (possibly
$W_z^0=E_z^*$ for certain $z$, but not for all); the other
components are annuli around the origin. Using the local
pseudoconvexity of $W'$, i.e. its Steinness in local trivializations
$E^*\vert_{U_j}\simeq U_j\times{\mathbb C}$, it is easy to see that
these annuli and discs cannot merge when $z$  moves in $X$. In other
words,
$$W'' = \cup_{z\in X}W_z^0$$ is a connected component of
$W'$, and of course it is still a nontrivial pseudoconvex
neighbourhood of $\Sigma^*$. We may use $W''$ as unit ball for a
metric on $L^*$. As in the first part of the proof, the
corresponding dual metric on $L$ has positive curvature, in the
sense of currents.
\end{proof}

Consider now, in the space $U_\Sigma\times Y$, the graph $\Gamma_f$
of the meromorphic map $f: U_\Sigma^0 = U_\Sigma\setminus\Sigma
\dashrightarrow Y$. By definition of meromorphicity, $\Gamma_f$ is
an irreducible analytic subset of $U_\Sigma^0\times Y \subset
U_\Sigma\times Y$, whose projection to $U_\Sigma^0$ is proper and
generically bijective. It may be singular, and in that case we
replace it by a resolution of its singularities, still denoted by
$\Gamma_f$. The (new) projection
$$\pi :\Gamma_f \to U_\Sigma^0$$
is a proper map, and it realizes an isomorphism between
$\Gamma_f\setminus Z$ and $U_\Sigma^0\setminus B$, for suitable
analytic subsets $Z\subset \Gamma_f$ and $B\subset U_\Sigma^0$, with
$B$ of codimension at least two.

The manifold $U_\Sigma\times Y$ is K\"ahler. The K\"ahler form
restricted to the graph of $f$ and pulled-back to its resolution
gives a smooth, semipositive, closed $(1,1)$-form $\omega$ on
$\Gamma_f$. Fix a smaller (tubular) neighbourhood $U_\Sigma'$ of
$\Sigma$, and set $U_0 = U_\Sigma\setminus\overline{U_\Sigma'}$,
$U=\pi^{-1}(U_0)\subset \Gamma_f$. Up to restricting a little the
initial $U_\Sigma$, we may assume that the $\omega^{n'}$-volume of
the shell $U$ is finite ($n'=n+1=\dim\Gamma_f$). Our aim is to prove
that
$$\int_{\Gamma_f} \omega^{n'} < +\infty .$$
Indeed, this is the volume of the graph of $f$. Its finiteness,
together with the analyticity of the graph in $U_\Sigma^0\times Y$,
imply that the closure of that graph in $U_\Sigma\times Y$ is still
an analytic subset of dimension $n'$, by Bishop's extension theorem
\cite{Siu} \cite{Chi}. This closure, then, is the graph of the
desired meromorphic extension $\bar f : U_\Sigma\dashrightarrow Y$.

We shall apply Theorem \ref{dingoyan}. Hence, consider the space
$P_\omega (\Gamma_f,U)$ of $\omega$-plurisub\-harmonic functions on
$\Gamma_f$, nonpositive on $U$, and let us check conditions (A) and
(B) above.

Consider the open subset $\Omega\subset\Gamma_f$ where the functions
of $P_\omega (\Gamma_f,U)$ are locally uniformly bounded from above.
It contains $U$, and it is a general fact that it is locally
pseudoconvex in $\Gamma_f$ \cite[\S 3]{Din}. Therefore $\Omega
'=\Omega\cap (\Gamma_f\setminus Z)$ is locally pseudoconvex in
$\Gamma_f\setminus Z$. Its isomorphic projection $\pi (\Omega ')$ is
therefore locally pseudoconvex in $U_\Sigma^0\setminus B$. Classical
characterizations of pseudoconvexity \cite[II.5]{Ran} show that
$\Omega_0 = {\rm interior}\{ \pi (\Omega ')\cup B\}$ is locally
pseudoconvex in $U_\Sigma^0$. From $\Omega\supset U$ we also have
$\Omega_0\supset U_0$.

\begin{figure}[h]
\includegraphics[width=8cm,height=5cm]{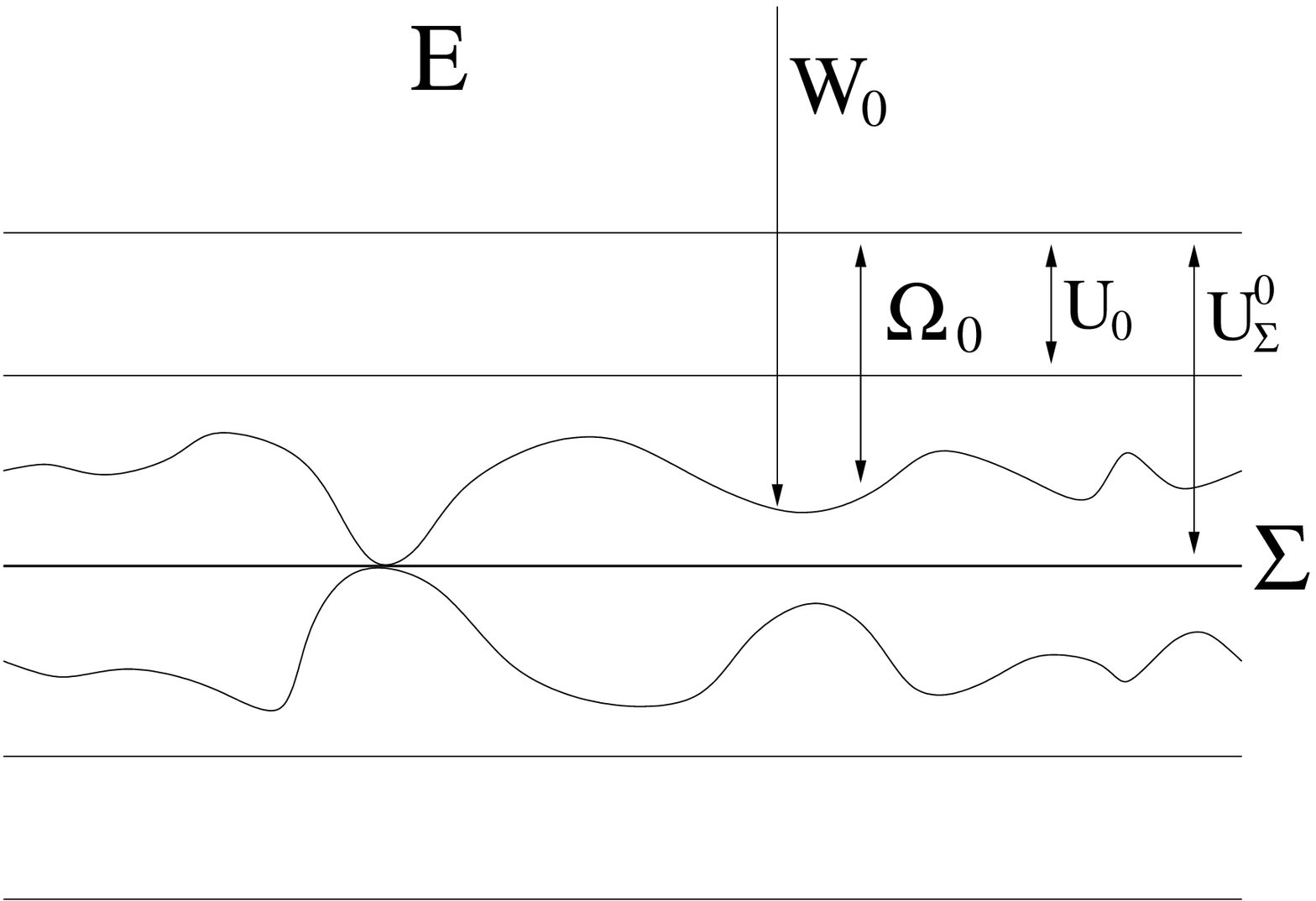}
\end{figure}

Take now in $E$ the neighbourhood of infinity $W_0 = \Omega_0\cup
(E\setminus U_\Sigma )$. Because $E\setminus \Sigma$ is naturally
isomorphic to $E^*\setminus\Sigma^*$, the isomorphism exchanging
null sections and sections at infinity, we can see $W_0$ as an open
subset of $E^*$, so that $W = W_0\cup\Sigma^*$ is a neighbourhood of
$\Sigma^*$, locally pseudoconvex in $E^*$. Because $L$ is not
pseudoeffective by assumption, Lemma \ref{criterion} says that
$W=E^*$. That is, $\Omega_0 = U_\Sigma^0$.

This implies that the original $\Omega\subset\Gamma_f$ contains, at
least, $\Gamma_f\setminus Z$. But, by the maximum principle, a
family of $\omega$-plurisubharmonic functions locally bounded
outside an analytic subset is automatically bounded also on the same
analytic subset. Therefore $\Omega = \Gamma_f$, and condition (A) of
Theorem \ref{dingoyan} is fulfilled.

Condition (B) is simpler \cite[\S 4]{Din}. We just have to exhibit a
$\omega$-pluri\-sub\-harmonic function on $\Gamma_f$ which is
nonpositive on $U$ and exhaustive on $\Gamma_f\setminus U$. On
$U_\Sigma^0$ we take the function
$$\psi (z) = -\log dist (z,\Sigma )$$
where $dist(\cdot ,\Sigma )$ is the distance function from $\Sigma$,
with respect to the K\"ahler metric $\omega_0$ on $U_\Sigma$.
Classical estimates (Takeuchi) give $dd^c\psi \ge -C\cdot\omega_0$,
for some positive constant $C$. Thus
$$dd^c(\psi\circ\pi )\ge -C\cdot\pi^*(\omega_0) \ge -C\cdot
\omega $$ because $\omega\ge\pi^*(\omega_0)$. Hence
$\frac{1}{C}(\psi\circ\pi )$ is $\omega$-plurisubharmonic on
$\Gamma_f$. For a sufficiently large $C'>0$,
$\frac{1}{C}(\psi\circ\pi ) -C'$ is moreover negative on $U$, and it
is exhaustive on $\Gamma_f\setminus U$. Thus condition (B) is
fulfilled.

Finally we can apply Theorem \ref{dingoyan}, obtain the finiteness
of the volume of the graph of $f$, and conclude the proof of the
theorem.
\end{proof}

\begin{remark} {\rm
We think that Theorem \ref{extension} should be generalizable to the
following ``nonlinear'' statement: if $U_\Sigma$ is any K\"ahler
manifold and $\Sigma\subset U_\Sigma$ is a compact hypersurface
whose normal bundle is not pseudoeffective, then any meromorphic map
$f:U_\Sigma\setminus\Sigma\dashrightarrow Y$ ($Y$ K\"ahler compact)
extends to $\bar f :U_\Sigma\dashrightarrow Y$. The difficulty is to
show that a locally pseudoconvex subset of $U_\Sigma^0 =
U_\Sigma\setminus\Sigma$ like $\Omega_0$ in the proof above can be
``lifted'' in the total space of the normal bundle of $\Sigma$,
preserving the local pseudoconvexity.}
\end{remark}

\section{Parabolic foliations}

We can now return to foliations.

As usual, let $X$ be a compact connected K\"ahler manifold, $\dim
X=n$, and let ${\mathcal F}$ be a foliation by curves on $X$
different from a rational quasi-fibration. Let us start with some
general remarks, still following \cite{Br5}.

\subsection{Global tubes}

The construction of holonomy tubes and covering tubes given in
Section 4 can be easily modified by replacing the transversal
$T\subset X^0$ with the full $X^0$. That is, all the holonomy
coverings $\widehat{L_p}$ and universal coverings $\widetilde{L_p}$,
$p\in X^0$, can be glued together, without the restriction $p\in T$.
The results are complex manifolds $V_{\mathcal F}$ and $U_{\mathcal
F}$, of dimension $n+1$, equipped with submersions
$$Q_{\mathcal F} : V_{\mathcal F}\rightarrow X^0\quad ,
\qquad P_{\mathcal F} : U_{\mathcal F}\rightarrow X^0$$ sections
$$q_{\mathcal F} : X^0\rightarrow V_{\mathcal F}\quad ,
\qquad p_{\mathcal F} : X^0\rightarrow U_{\mathcal F}$$ and
meromorphic maps
$$\pi_{\mathcal F} : V_{\mathcal F}\dashrightarrow X\quad ,
\qquad \Pi_{\mathcal F} : U_{\mathcal F}\dashrightarrow X$$ such
that, for any transversal $T\subset X^0$, we have $Q_{\mathcal
F}^{-1}(T)=V_T$, $q_{\mathcal F}\vert_T = q_T$, $\pi_{\mathcal
F}\vert_{Q_{\mathcal F}^{-1}(T)}=\pi_T$, etc.

Remark that if $D\subset X^0$ is a small disc contained in some leaf
$L_p$ of ${\mathcal F}$, $p\in D$, then $Q_{\mathcal F}^{-1}(D)$ is
naturally isomorphic to the product $\widehat{L_p}\times D$: for
every $q\in D$, $\widehat{L_q}$ is the same as $\widehat{L_p}$, but
with a different basepoint. More precisely, thinking to points of
$\widehat{L_q}$ as equivalence classes of paths starting at $q$, we
see that for every $q\in D$ the isomorphism between $\widehat{L_q}$
and $\widehat{L_p}$ is completely canonical, once $D$ is fixed and
because $D$ is contractible. This means that $D$ can be lifted, in a
canonical way, to a foliation by discs in $Q_{\mathcal F}^{-1}(D)$,
transverse to the fibers. In this way, by varying $D$ in $X^0$, we
get in the full space $V_{\mathcal F}$ a nonsingular foliation by
curves $\widehat{\mathcal F}$, which projects by $Q_{\mathcal F}$ to
${\mathcal F}^0$.

If $\gamma :[0,1]\to X^0$ is a loop in a leaf, $\gamma (0)=\gamma
(1)=p$, then this foliation $\widehat{\mathcal F}$ permits to define
a {\it monodromy map} of the fiber $\widehat{L_p}$ into itself. This
monodromy map is just the covering transformation of $\widehat{L_p}$
corresponding to $\gamma$ (which may be trivial, if the holonomy of
the foliation along $\gamma$ is trivial).

In a similar way, in the space $U_{\mathcal F}$ we get a canonically
defined nonsingular foliation by curves $\widetilde{\mathcal F}$,
which projects by $P_{\mathcal F}$ to ${\mathcal F}^0$. And we have
a fiberwise covering
$$F_{\mathcal F} : U_{\mathcal F}\rightarrow V_{\mathcal F}$$
which is a local diffeomorphism, sending $\widetilde{\mathcal F}$ to
$\widehat{\mathcal F}$.

In the spaces $U_{\mathcal F}$ and $V_{\mathcal F}$ we also have the
graphs of the sections $p_{\mathcal F}$ and $q_{\mathcal F}$. They
are {\it not} invariant by the foliations $\widetilde{\mathcal F}$
and $\widehat{\mathcal F}$: in the notation above, with $D$ in a
leaf and $p,q\in D$, the basepoint $q_{\mathcal
F}(q)\in\widehat{L_q}$ corresponds to the constant path $\gamma
(t)\equiv q$, whereas the point of $\widehat{L_q}$ in the same leaf
(of $\widehat{\mathcal F}$) of $q_{\mathcal F}(p)\in\widehat{L_p}$
corresponds to the class of a path in $D$ from $q$ to $p$. In fact,
the graphs $p_{\mathcal F}(X^0)\subset U_{\mathcal F}$ and
$q_{\mathcal F}(X^0)\subset V_{\mathcal F}$ are hypersurfaces
everywhere transverse to $\widetilde{\mathcal F}$ and
$\widehat{\mathcal F}$.

\begin{figure}[h]
\includegraphics[width=11cm,height=5cm]{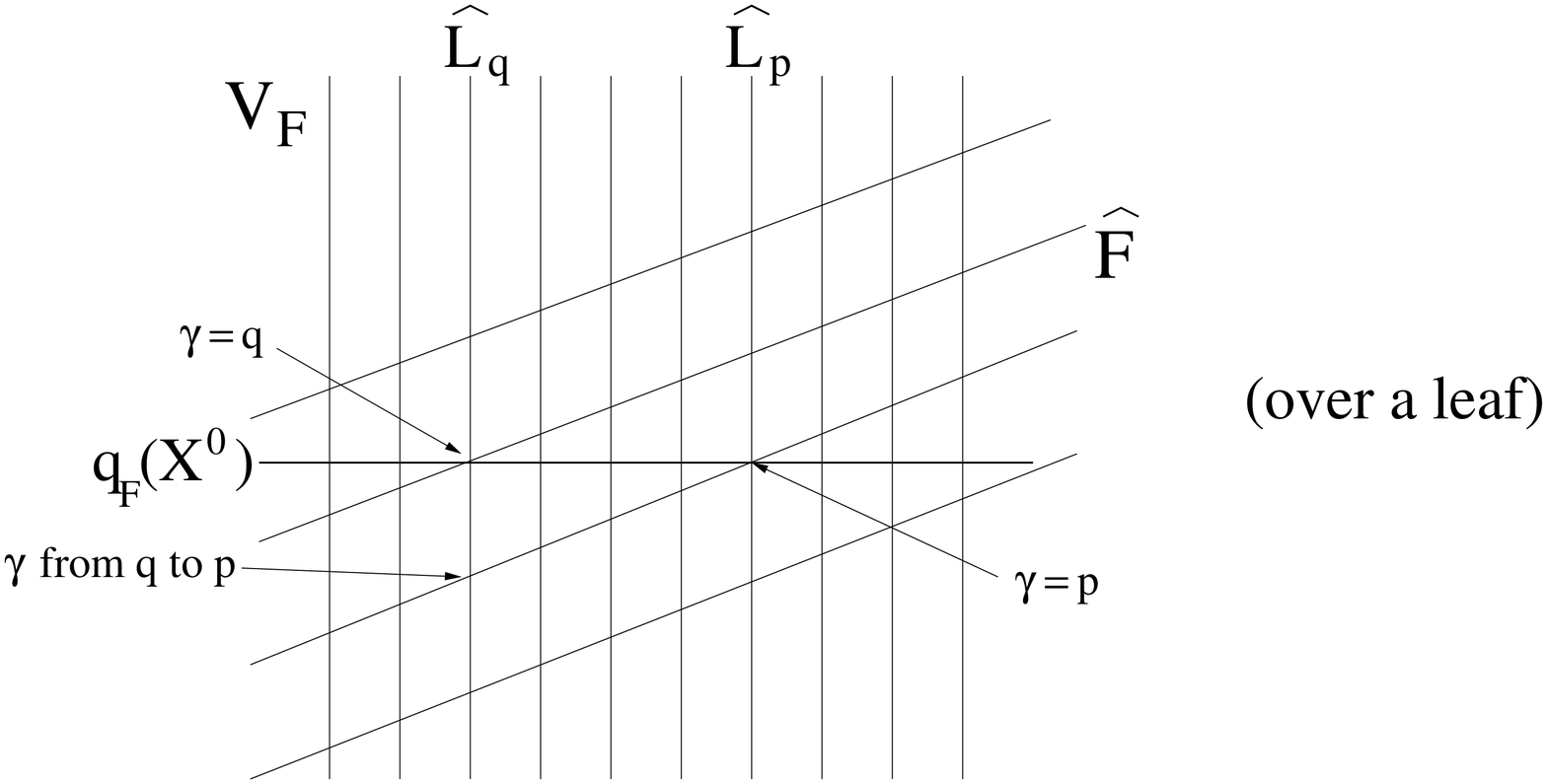}
\end{figure}

A moment of reflection shows also the following fact: the normal
bundle of the hypersurface $p_{\mathcal F}(X^0)$ in $U_{\mathcal F}$
(or $q_{\mathcal F}(X^0)$ in $V_{\mathcal F}$) is naturally
isomorphic to $T_{\mathcal F}\vert_{X^0}$, the tangent bundle of the
foliation restricted to $X^0$. That is, the manifold $U_{\mathcal
F}$ (resp. $V_{\mathcal F}$) can be thought as an ``integrated
form'' of the (total space of the) tangent bundle of the foliation,
in which tangent lines to the foliation are replaced by universal
coverings (resp. holonomy coverings) of the corresponding leaves.
From this perspective, which will be useful below, the map
$\Pi_{\mathcal F}:U_{\mathcal F}\dashrightarrow X$ is a sort of
``skew flow'' associated to ${\mathcal F}$, in which the ``time''
varies not in ${\mathbb C}$ but in the universal covering of the
leaf.

Let us conclude this discussion with a trivial but illustrative
example.

\begin{example}\label{trivial} {\rm
Suppose $n=1$, i.e. $X$ is a compact connected curve and ${\mathcal
F}$ is the foliation with only one leaf, $X$ itself. The manifold
$V_{\mathcal F}$ is composed by equivalence classes of paths in $X$,
where two paths are equivalent if they have the same starting point
and the same ending point (here holonomy is trivial!). Clearly,
$V_{\mathcal F}$ is the product $X\times X$, $Q_{\mathcal F}$ is the
projection to the first factor, $q_{\mathcal F}$ is the diagonal
embedding of $X$ into $X\times X$, and $\pi_{\mathcal F}$ is the
projection to the second factor. Note that the normal bundle of the
diagonal $\Delta\subset X\times X$ is naturally isomorphic to $TX$.
The foliation $\widehat{\mathcal F}$ is the horizontal foliation,
and note that its monodromy is trivial, corresponding to the fact
that the holonomy of the foliation is trivial. The manifold
$U_{\mathcal F}$ is the fiberwise universal covering of $V_{\mathcal
F}$, with basepoints on the diagonal. It is {\it not} the product of
$X$ with the universal covering $\widetilde{X}$ (unless $X={\mathbb
P}$, of course). It is only a locally trivial $\widetilde{X}$-bundle
over $X$. The foliation $\widetilde{\mathcal F}$ has nontrivial
monodromy: if $\gamma :[0,1]\to X$ is a loop based at $p$, then the
monodromy of $\widetilde{\mathcal F}$ along $\gamma$ is the covering
transformation of the fiber over $p$ (i.e. the universal covering of
$X$ with basepoint $p$) associated to $\gamma$. The foliation
$\widetilde{\mathcal F}$ can be described as the suspension of the
natural representation $\pi_1 (X)\to Aut(\widetilde{X})$
\cite{CLN}.}
\end{example}

\subsection{Parabolic foliations}

After these preliminaries, let us concentrate on the class of {\bf
parabolic foliations}, i.e. let us assume that all the leaves of
${\mathcal F}$ are uniformised by ${\mathbb C}$. In this case, the
Poincar\'e metric on the leaves is identically zero, hence quite
useless. But our convexity result Theorem \ref{convexity} still
gives a precious information on covering tubes.

\begin{theorem} \label{trivialtube}
Let $X$ be a compact connected K\"ahler manifold and let ${\mathcal
F}$ be a parabolic foliation on $X$. Then the global covering tube
$U_{\mathcal F}$ is a locally trivial ${\mathbb C}$-fibration over
$X^0$, isomorphic to the total space of $T_{\mathcal F}$ over $X^0$,
by an isomorphism sending $p_{\mathcal F}(X^0)$ to the null section.
\end{theorem}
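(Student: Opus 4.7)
The strategy is to apply Nishino's product theorem (Theorem \ref{nishino}) locally over small polydisc transversals to $\mathcal{F}^0$, and then to patch the resulting trivializations using the canonical identification of the vertical tangent bundle of $U_\mathcal{F}\to X^0$ along the basepoint section $p_\mathcal{F}(X^0)$ with $T_\mathcal{F}\vert_{X^0}$.

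Fix a polydisc transversal $T\simeq\mathbb{D}^{n-1}$ and consider the covering tube $P_T:U_T\to T$ given by Proposition \ref{coveringtube}. By Lemma \ref{areafibers}, we may assume the set $R\subset T$ of finite-area fibers is thin, since otherwise the conclusion $U_T\simeq T\times\mathbb{C}$ already holds. Choose a holomorphic nowhere-vanishing section $v$ of $T_\mathcal{F}\vert_T$, lift it to a vertical vector field along $p_T(T)\subset U_T$, and integrate to obtain a holomorphic embedding $j:T\times\mathbb{D}\hookrightarrow U_T$ with $j(T\times\{0\})=p_T(T)$ and $\partial_w j(t,0)$ equal to the lift of $v(t)$. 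Set $U_{T,\varepsilon}=U_T\setminus j(T\times\overline{\mathbb{D}(\varepsilon)})$. Hypothesis (i) of Theorem \ref{nishino} holds by the parabolic assumption; the heart of the proof is verifying hypothesis (ii), the plurisubharmonic variation of the fiberwise Poincar\'e metric on $U_{T,\varepsilon}$ for every $\varepsilon>0$.

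For this I would repeat the argument of Proposition \ref{psh}, now on the fibration $U_{T,\varepsilon}\to T$ whose fibers are hyperbolic annular regions. Pick any closed disc $S\subset T$ satisfying conditions (a) and (b) of Section 5 (possible since both $R$ and $P_T(\mathrm{Indet}(\Pi_T))$ are thin in $T$), and apply Theorem \ref{convexity} to produce a sequence of relatively compact domains $\Omega_k\subset U_S$ bounded by real analytic Levi-flat hypersurfaces, whose fiber $\Omega_k(z)$ is a holomorphic disc containing $p_S(z)$, and with $\bigcup_k\Omega_k(z)=P_S^{-1}(z)$ for $z\in\partial S$. For $k$ large enough, $\Omega_k(z)\supset j(z,\overline{\mathbb{D}(\varepsilon)})$, so
\[\Omega_k^\varepsilon \;=\; \Omega_k\setminus j(S\times\overline{\mathbb{D}(\varepsilon)})\]
is a relatively compact subdomain of $U_{S,\varepsilon}$ with piecewise Levi-flat (hence pseudoconvex) boundary and annular fibers. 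By the annular version of Proposition \ref{yamaguchi} available through the covering argument of Kizuka (mentioned in the remark following Proposition \ref{yamaguchi}), the fiberwise Poincar\'e metric on $\Omega_k^\varepsilon$ has plurisubharmonic variation. The Poincar\'e metrics on the increasing family $\Omega_k^\varepsilon(z)$ decrease monotonically to the Poincar\'e metric on the limit annular fiber $U_{S,\varepsilon}(z)$, so the logarithms of the fiberwise $v$-norms decrease monotonically; a decreasing limit of plurisubharmonic functions is plurisubharmonic (and not identically $-\infty$, since the limit fibers remain hyperbolic). Letting $S$ range over $T$ yields the plurisubharmonic variation on all of $U_{T,\varepsilon}$.

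Theorem \ref{nishino} then produces a fiberwise biholomorphism $f:U_T\to T\times\mathbb{C}$ normalized, in the coordinates given by $j$, by $f(t,p_T(t))=0$ and $\partial_w f(t,p_T(t))=1$. Replacing $v$ by $gv$ with $g\in\mathcal{O}^*(T)$ rescales $j$ and hence $f$ on the $\mathbb{C}$-factor by $g^{-1}$; this is exactly the transition cocycle of the total space of $T_\mathcal{F}\vert_T$, once the vertical tangent to $U_T$ along $p_T(T)$ is canonically identified with $T_\mathcal{F}\vert_T$. Consequently, the collection of local trivializations patches into a global bundle isomorphism $U_\mathcal{F}\simeq T_\mathcal{F}\vert_{X^0}$ sending $p_\mathcal{F}(X^0)$ to the null section, as claimed. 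The main obstacle is the verification of hypothesis (ii) of Theorem \ref{nishino}: combining Theorem \ref{convexity} with the annular form of Yamaguchi's theorem and executing the monotone passage to the limit requires some care, especially in arranging the area condition (b) of Section 5 and in handling the piecewise pseudoconvex boundary of $\Omega_k^\varepsilon$.
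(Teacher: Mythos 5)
Your proof is correct and follows essentially the same route as the paper's: apply Theorem \ref{nishino} over small transversals $T$, verify its hypothesis (ii) by the same machinery as Proposition \ref{psh} (Theorem \ref{convexity} to produce the Levi-flat domains $\Omega_k$, remove the $\varepsilon$-tube around $p_S(S)$ to get pseudoconvex annular fibrations $\Omega_k^\varepsilon$, apply the annular form of Yamaguchi/Kizuka, and pass to the monotone limit), and conclude $U_T\simeq T\times\mathbb{C}$. The one genuine difference is the final identification $U_{\mathcal F}\simeq E_{\mathcal F}\vert_{X^0}$. You argue by matching cocycles: the Nishino trivialization, normalized by $\partial_w f(t,p_T(t))=1$ in the chart $j$ generated by a frame $v$ of $T_{\mathcal F}\vert_T$, transforms by $g^{-1}$ under $v\mapsto gv$, which is exactly the transition function of the total space of $T_{\mathcal F}$. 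The paper instead constructs the bundle map pointwise and canonically --- a nonzero $\widetilde{v_p}\in T_p\widetilde{L_p}$ extends uniquely to a complete nowhere-vanishing field on $\widetilde{L_p}\simeq\mathbb{C}$, and $v_p$ is sent to the time-one flow image of $p$ --- and then proves holomorphicity by observing that, in a Nishino chart, a fiberwise complete vertical field $F(z,w)\partial_w$ holomorphic along $\{w=0\}$ and holomorphic on each fiber must have $F$ independent of $w$, hence globally holomorphic. The two routes give the same map (your Nishino chart \emph{is} the linearization of that flow), and the cocycle phrasing is slicker, but it is worth noting that your argument implicitly also requires the Nishino trivializations to be compatible under the canonical parallel transport along $\widetilde{\mathcal F}$ in the leaf direction (so that one really obtains a trivialization over an open set of $X^0$, not merely over a transversal); the paper's pointwise construction makes this automatic, since the map at $p$ depends only on the germ of the data at $p$, and the transversal direction is then handled by the holomorphicity check. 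Your explicit invocation of Lemma \ref{areafibers} to dispose of the case $R=T$ is a sensible addition that the paper leaves tacit.
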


\begin{proof}
By the discussion above (local triviality of $U_{\mathcal F}$ along
the leaves), the first statement is equivalent to say that, if
$T\subset X^0$ is a small transversal (say, isomorphic to ${\mathbb
D}^{n-1}$), then $U_T\simeq T\times{\mathbb C}$.

We use for this Theorem \ref{nishino} of Section 2. We may assume
that there exists an embedding $T\times{\mathbb D}\buildrel
j\over\to U_T$ sending fibers to fibers and $T\times\{ 0\}$ to
$p_T(T)$. Then we set
$$U_T^\varepsilon = U_T\setminus\{j(T\times\overline{{\mathbb D}
(\varepsilon )})\} .$$ We need to prove that the fiberwise
Poincar\'e metric on $U_T^\varepsilon$ has a plurisubharmonic
variation, for every $\varepsilon > 0$ small.

But this follows from Theorem \ref{convexity} in exactly the same
way as we did in Proposition \ref{psh} of Section 6. We just
replace, in that proof, the open subsets $\Omega_j\subset U_S$ (for
$S\subset T$ a generic disc) with
$$\Omega_j^\varepsilon = \Omega_j\setminus \{
j(S\times\overline{{\mathbb D}(\varepsilon )})\} .$$ Then the
fibration $\Omega_j^\varepsilon\to S$ is, for $j$ large, a fibration
by annuli, and its boundary in $U_S$ has two components: one is the
Levi flat $M_j$, and the other one is the Levi-flat
$j(S\times\partial {\mathbb D}(\varepsilon ))$. Then Theorem
\ref{kizuka} of Section 2, or more simply the annular generalization
of Proposition \ref{yamaguchi}, gives the desired plurisubharmonic
variation on $\Omega_j^\varepsilon$, and then on $U_S^\varepsilon$
by passing to the limit and finally on $U_T^\varepsilon$.

Hence $U_T\simeq T\times{\mathbb C}$ and $U_{\mathcal F}$ is a
locally trivial ${\mathbb C}$-fibration over $X^0$.

Let us now define explicitely the isomorphism between $U_{\mathcal
F}$ and the total space $E_{\mathcal F}$ of $T_{\mathcal F}$ over
$X^0$.

Take $p\in X^0$ and let $v_p\in E_{\mathcal F}$ be a point over $p$.
Then $v_p$ is a tangent vector to $L_p$ at $p$, and it can be lifted
to $\widetilde{L_p}$ as a tangent vector $\widetilde{v_p}$ at $p$.
Suppose $\widetilde{v_p}\not= 0$. Then, because
$\widetilde{L_p}\simeq{\mathbb C}$, $\widetilde{v_p}$ can be
extended, in a uniquely defined way, to a complete holomorphic and
nowhere vanishing vector field $\widetilde{v}$ on $\widetilde{L_p}$.
Take $q\in\widetilde{L_p}$ equal to the image of $p$ by the time-one
flow of $\widetilde{v}$, and take $q=p$ if $\widetilde{v_p}=0$. We
have in this way defined a map $(E_{\mathcal F})_p\to
\widetilde{L_p}$, $v_p\mapsto q$, which obviously is an isomorphism,
sending the origin of $(E_{\mathcal F})_p$ to the basepoint of
$\widetilde{L_p}$. In other words: because $L_p$ is parabolic, we
have a canonically defined isomorphism between $(T_pL_p,0)$ and
$(\widetilde{L_p}, p)$.

By varying $p$ in $X^0$ we thus have a map
$$U_{\mathcal F}\rightarrow E_{\mathcal F}\vert_{X^0}$$
sending $p_{\mathcal F}(X^0)$ to the null section, and we need to
verify that this map is {\it holomorphic}. This follows from the
fact that $U_{\mathcal F}$ (and $E_{\mathcal F}$ also, of course) is
a locally trivial fibration. In terms of the previous construction,
we take a local transversal $T\subset X^0$ and a nowhere vanishing
holomorphic section $v_p$, $p\in T$, of $E_{\mathcal F}$ over $T$.
The previous construction gives a vertical vector field
$\widetilde{v}$ on $U_T$, which is, on every fiber, complete
holomorphic and nowhere vanishing, and moreover it is holomorphic
along $p_T(T)\subset U_T$. After a trivialization $U_T\simeq
T\times{\mathbb C}$, sending $p_T(T)$ to $\{ w=0\}$, this vertical
vector field $\widetilde{v}$ becomes something like
$F(z,w)\frac{\partial}{\partial w}$, with $F$ nowhere vanishing,
$F(z,\cdot )$ holomorphic for every fixed $z$, and $F(\cdot ,0)$
also holomorphic. The completeness on fibers gives that $F$ is in
fact {\it constant} on every fiber, i.e. $F=F(z)$, and so $F$ is in
fact fully holomorphic. Thus $\widetilde{v}$ is fully holomorphic on
the tube. This means precisely that the above map $U_{\mathcal F}\to
E_{\mathcal F}\vert_{X^0}$ is holomorphic.
\end{proof}

\begin{example}\label{complete} {\rm
Consider a foliation ${\mathcal F}$ generated by a global
holomorphic vector field $v\in \Theta (X)$, vanishing precisely on
$Sing({\mathcal F})$. This means that $T_{\mathcal F}$ is the
trivial line bundle, and $E_{\mathcal F} = X\times{\mathbb C}$. The
compactness of $X$ permits to define the {\it flow} of $v$
$$\Phi : X\times{\mathbb C} \rightarrow X$$
which sends $\{ p\}\times{\mathbb C}$ to the orbit of $v$ through
$p$, that is to $L_p^0$ if $p\in X^0$ or to $\{ p\}$ if $p\in
Sing({\mathcal F})$. Recalling that $L_p=L_p^0$ for a generic leaf,
and observing that every $L_p^0$ is obviously parabolic, we see that
${\mathcal F}$ is a parabolic foliation. It is also not difficult to
see that, in fact, $L_p=L_p^0$ for every leaf, i.e. there are no
vanishing ends, and so the map
$$\Pi_{\mathcal F} : U_{\mathcal F} \rightarrow X^0$$ is everywhere
holomorphic, with values in $X^0$. We have $U_{\mathcal F} =
X^0\times{\mathbb C}$ (by Theorem \ref{trivialtube}, which is
however quite trivial in this special case), and the map
$\Pi_{\mathcal F} : X^0\times{\mathbb C}\to X^0$ can be identified
with the restricted flow $\Phi : X^0\times{\mathbb C}\to X^0$.}
\end{example}

\begin{remark} \label{canonicalnef} {\rm
It is a general fact \cite{Br3} that vanishing ends of a foliation
${\mathcal F}$ produce rational curves in $X$ over which the
canonical bundle $K_{\mathcal F}$ has negative degree. In
particular, if $K_{\mathcal F}$ is algebraically nef (i.e.
$K_{\mathcal F}\cdot C \ge 0$ for every compact curve $C\subset X$)
then ${\mathcal F}$ has no vanishing end.}
\end{remark}

\subsection{Foliations by rational curves}

We shall say that a foliation by curves ${\mathcal F}$ is a {\bf
foliation by rational curves} if for every $p\in X^0$ there exists a
rational curve $R_p\subset X$ passing through $p$ and tangent to
${\mathcal F}$. This class of foliations should not be confused with
the smaller class of rational quasi-fibrations: certainly a rational
quasi-fibration is a foliation by rational curves, but the converse
is in general false, because the above rational curves $R_p$ can
pass through $Sing({\mathcal F})$ and so $L_p$ (which is equal to
$R_p$ minus those points of $R_p\cap Sing({\mathcal F})$ not
corresponding to vanishing ends) can be parabolic or even
hyperbolic. Thus the class of foliations by rational curves is
transversal to our fundamental trichotomy rational quasi-fibrations
/ parabolic foliations / hyperbolic foliations.

A typical example is the radial foliation in the projective space
${\mathbb C}P^n$, i.e. the foliation generated (in an affine chart)
by the radial vector field $\sum z_j\frac{\partial}{\partial z_j}$:
it is a foliation by rational curves, but it is parabolic. On the
other hand, it is a standard exercise in bimeromorphic geometry to
see that any foliation by rational curves can be transformed, by a
bimeromorphic map, into a rational quasi-fibration. For instance,
the radial foliation above can be transformed into a rational
quasi-fibration, and even into a ${\mathbb P}$-bundle, by blowing-up
the origin.

We have seen in Section 6 that the canonical bundle $K_{\mathcal F}$
of a hyperbolic foliation is always pseudoeffective. At the opposite
side, for a rational quasi-fibration $K_{\mathcal F}$ is never
pseudoeffective: its degree on a generic leaf (a smooth rational
curve disjoint from $Sing({\mathcal F})$) is equal to $-2$, and this
prevents pseudoeffectivity. For parabolic foliations, the situation
is mixed: the radial foliation in ${\mathbb C}P^n$ has canonical
bundle equal to ${\mathcal O}(-1)$, which is not pseudoeffective; a
foliation like in Example \ref{complete} has trivial canonical
bundle, which is pseudoeffective. One can also easily find examples
of parabolic foliations with ample canonical bundle, for instance
most foliations arising from complete polynomial vector fields in
${\mathbb C}^n$ \cite{Br4}.

The following result, which combines Theorem \ref{trivialtube} and
Theorem \ref{extension}, shows that most parabolic foliations have
pseudoeffective canonical bundle.

\begin{theorem} \label{parabolic}
Let ${\mathcal F}$ be a parabolic foliation on a compact connected
K\"ahler manifold $X$. Suppose that its canonical bundle
$K_{\mathcal F}$ is not pseudoeffective. Then ${\mathcal F}$ is a
foliation by rational curves.
\end{theorem}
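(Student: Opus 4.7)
My strategy is to use Theorem \ref{trivialtube} to identify the covering tube with the total space of $T_{\mathcal F}$ over $X^0$, compactify this fibrewise to a projective $\mathbb P^1$-bundle whose infinity section has normal bundle $K_{\mathcal F}$, and then apply the non-pseudoeffectivity extension result (Theorem \ref{extension}) to push the ``skew flow'' $\Pi_{\mathcal F}$ across that infinity section. The fibres of the compactification then supply rational curves tangent to $\mathcal F$ through every point of $X$.

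First, by Theorem \ref{trivialtube} there is a canonical isomorphism $U_{\mathcal F}\simeq E_{\mathcal F}|_{X^0}$, where $E_{\mathcal F}$ denotes the total space of $T_{\mathcal F}\to X$; under this identification $p_{\mathcal F}(X^0)$ is the zero section and the meromorphic map $\Pi_{\mathcal F}$ becomes a meromorphic map from $E_{\mathcal F}|_{X^0}$ into the compact K\"ahler manifold $X$. Since $Sing({\mathcal F})\subset X$ has codimension at least two, $E_{\mathcal F}|_{Sing({\mathcal F})}$ has codimension at least two in $E_{\mathcal F}$, so a Hartogs-type extension for meromorphic maps into compact K\"ahler targets (Siu, Shiffman) extends $\Pi_{\mathcal F}$ meromorphically to all of $E_{\mathcal F}$.

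Second, compactify $E_{\mathcal F}$ fibrewise to the $\mathbb P^1$-bundle $\overline E=\mathbb P(T_{\mathcal F}\oplus\mathcal O)$. This has two natural sections: the zero section $\Sigma_0$ (normal bundle $T_{\mathcal F}$) and the infinity section $\Sigma_\infty$ (normal bundle $T_{\mathcal F}^*=K_{\mathcal F}$); moreover $\overline E\setminus\Sigma_0$ is naturally isomorphic to the total space of $K_{\mathcal F}$, with $\Sigma_\infty$ as its zero section. Because $K_{\mathcal F}$ is not pseudoeffective by hypothesis, Theorem \ref{extension} applies (with $L=K_{\mathcal F}$, $Y=X$) to the restriction of $\Pi_{\mathcal F}$ to a punctured tubular neighbourhood of $\Sigma_\infty$, producing a meromorphic extension across $\Sigma_\infty$. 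Gluing this extension with $\Pi_{\mathcal F}$ on $E_{\mathcal F}$ yields a meromorphic map $\overline\Pi:\overline E\dashrightarrow X$.

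Finally, for a generic $p\in X^0$ (outside the projection of the codimension-two indeterminacy locus of $\overline\Pi$) the fibre $\overline E_p\simeq\mathbb P^1$ is mapped holomorphically into $X$; the point of $\Sigma_0$ in this fibre goes to $p$ while the affine part covers the leaf $L_p$, which is a nonconstant holomorphic image of $\mathbb C$. Hence $\overline\Pi(\overline E_p)$ is a rational curve tangent to $\mathcal F$ through $p$. Letting $p$ range over a sequence converging to an arbitrary $q\in X$ and passing to a limit in the Chow/Douady/Barlet space of 1-cycles produces a connected rational 1-cycle tangent to $\mathcal F$ through $q$, whence a rational curve through $q$ as required. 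The delicate step I expect to be the second one: one must be precise about the identification of a neighbourhood of $\Sigma_\infty$ in $\overline E$ with a neighbourhood of the null section of the total space of $K_{\mathcal F}$, and check that the (already nontrivial) indeterminacy locus of $\Pi_{\mathcal F}$ coming from vanishing ends does not obstruct the hypotheses of Theorem \ref{extension} on a sufficiently thin such neighbourhood.
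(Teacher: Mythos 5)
Your proposal is correct and follows the paper's own proof essentially step for step: identify $U_{\mathcal F}$ with $E_{\mathcal F}|_{X^0}$ via Theorem \ref{trivialtube}, extend $\Pi_{\mathcal F}$ across the codimension-$\geq 2$ locus over $Sing({\mathcal F})$ by Siu, then apply Theorem \ref{extension} with $L=K_{\mathcal F}$ near the section at infinity to obtain $\overline{\Pi_{\mathcal F}}:\overline{E_{\mathcal F}}\dashrightarrow X$, whose $\mathbb P^1$-fibers give rational curves tangent to $\mathcal F$. The closing Chow-space limit argument is not needed for the statement as written (the definition of ``foliation by rational curves'' only requires a rational curve through each $p\in X^0$, and each such fiber already yields one after resolving indeterminacies on $\mathbb P^1$), though it is exactly the extra step used later to upgrade to Theorem \ref{main}.
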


\begin{proof}
Consider the meromorphic map
$$\Pi_{\mathcal F}: E_{\mathcal F}\vert_{X^0}\simeq U_{\mathcal F}
\dashrightarrow X$$ given by Theorem \ref{trivialtube}. Because
$Sing({\mathcal F}) = X\setminus X^0$ has codimension at least two,
such a map meromorphically extends \cite{Siu} to the full space
$E_{\mathcal F}$:
$$\Pi_{\mathcal F} : E_{\mathcal F}\dashrightarrow X .$$

The section at infinity of $E_{\mathcal F}$ is the same as the null
section of $E_{\mathcal F}^*$, the total space of $K_{\mathcal F}$.
If $K_{\mathcal F}$ is not pseudoeffective, then by Theorem
\ref{extension} $\Pi_{\mathcal F}$ extends to the full
$\overline{E_{\mathcal F}}= E_{\mathcal F}\cup\{$ section at
$\infty\}$, as a meromorphic map
$$\overline{\Pi_{\mathcal F}} : \overline{E_{\mathcal F}}
\dashrightarrow X .$$

By construction, $\overline{\Pi_{\mathcal F}}$ sends the rational
fibers of $\overline{E_{\mathcal F}}$ to rational curves in $X$
tangent to ${\mathcal F}$, which is therefore a foliation by
rational curves.
\end{proof}

Note that the converse to this theorem is not always true: for
instance, a parabolic foliation like in Example \ref{complete} has
trivial (pseudoeffective) canonical bundle, yet it can be a
foliation by rational curves, for some special $v$.

We may resume the various inclusions of the various classes in the
diagram below.

\begin{figure}[h]
\includegraphics[width=10cm,height=6cm]{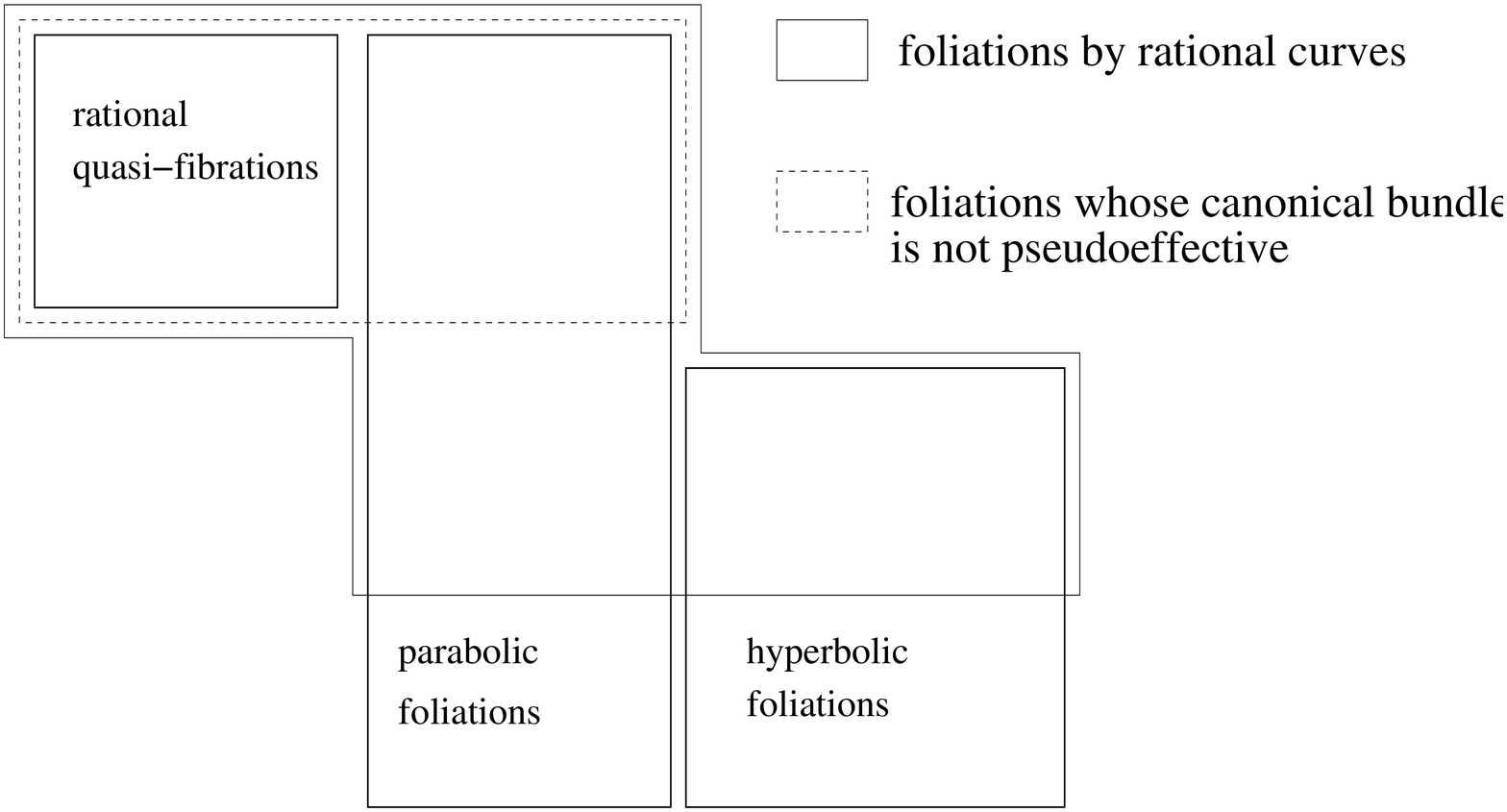}
\end{figure}

Let us discuss the classical case of fibrations.

\begin{example} \label{fibrations} {\rm
Suppose that ${\mathcal F}$ is a fibration over some base $B$, i.e.
there exists a holomorphic map $f:X \to B$ whose generic fiber is a
leaf of ${\mathcal F}$ (but there may be singular fibers, and even
some higher dimensional fibers). Let $g$ be the genus of a generic
fiber, and suppose that $g\ge 1$. The relative canonical bundle of
$f$ is defined as
$$K_f = K_X\otimes f^*(K_B^{-1}) .$$
It is related to the canonical bundle $K_{\mathcal F}$ of ${\mathcal
F}$ by the relation
$$K_f = K_{\mathcal F}\otimes {\mathcal O}_X(D)$$
where $D$ is an {\it effective} divisor which takes into account the
possible ramifications of $f$ along nongeneric fibers. Indeed, by
adjunction along the leaves, we have $K_X = K_{\mathcal F}\otimes
N_{\mathcal F}^*$, where $N_{\mathcal F}^*$ denotes the determinant
conormal bundle of ${\mathcal F}$. If $\omega$ is a local generator
of $K_B$, then $f^*(\omega )$ is a local section of $N_{\mathcal
F}^*$ which vanishes along the ramification divisor $D$ of $f$,
hence $f^*(K_B)=N_{\mathcal F}^*\otimes {\mathcal O}_X(-D)$, whence
the relation above.

Because ${\mathcal F}$ is not a foliation by rational curves, we
have, by the Theorems above, that $K_{\mathcal F}$ is
pseudoeffective, and therefore also $K_f$ is pseudoeffective. In
particular, $f_*(K_{\mathcal F})$ and $f_*(K_f)$ are
``pseudoeffective sheaves'' on $B$, in the sense that their degrees
with respect to K\"ahler metrics on $B$ are nonnegative. This must
be compared with Arakelov's positivity theorem \cite{Ara} \cite[Ch.
III]{BPV}. But, as in Arakelov's results, something more can be
said. Suppose that $B$ is a curve (or restrict the fibration $f$
over some curve in $B$) and let us distinguish between the
hyperbolic and the parabolic case.

$\bullet$ $g\ge 2$. Then the pseudoeffectivity of $K_{\mathcal F}$
is realized by the leafwise Poincar\'e metric (Theorem
\ref{hyperbolic}). A subtle computation \cite{Br2} \cite{Br1} shows
that this leafwise (or fiberwise) Poincar\'e metric has a {\it
strictly} plurisubharmonic variation, unless the fibration is
isotrivial. This means that if $f$ is {\it not} isotrivial then the
degree of $f_*(K_{\mathcal F})$ (and, a fortiori, the degree of
$f_*(K_f)$) is {\it strictly positive}.

$\bullet$ $g=1$. We put on every smooth elliptic leaf of ${\mathcal
F}$ the (unique) flat metric with total area 1. It is shown in
\cite{Br4} (using Theorem \ref{trivialtube} above) that this
leafwise metric extends to a metric on $K_{\mathcal F}$ with
positive curvature. In other words, the pseudoeffectivity of
$K_{\mathcal F}$ is realized by a leafwise flat metric. Moreover,
still in \cite{Br4} it is observed that if the fibration is not
isotrivial then the curvature of such a metric on $K_{\mathcal F}$
is strictly positive on directions transverse to the fibration. We
thus get the same conclusion as in the hyperbolic case: if $f$ is
{\it not} isotrivial then the degree of $f_*(K_{\mathcal F})$ (and,
a fortiori, the degree of $f_*(K_f)$) is {\it strictly positive}.}
\end{example}

Let us conclude with several remarks around the pseudoeffectivity of
$K_{\mathcal F}$.

\begin{remark} {\rm
In the case of hyperbolic foliations, Theorem \ref{hyperbolic} is
very efficient: not only $K_{\mathcal F}$ is pseudoeffective, but
even this pseudoeffectivity is realized by an explicit metric,
induced by the leafwise Poincar\'e metric. This gives further useful
properties. For instance, we have seen that the polar set of the
metric is filled by singularities and parabolic leaves. Hence, for
example, if all the leaves are hyperbolic and the singularities are
isolated, then $K_{\mathcal F}$ is not only pseudoeffective but even
nef (numerically eventually free \cite{Dem}). This efficiency is
unfortunately lost in the case of parabolic foliations, because in
Theorem \ref{parabolic} the pseudoeffectivity of $K_{\mathcal F}$ is
obtained via a more abstract argument. In particular, we do not know
how to control the polar set of the metric. See, however, \cite{Br4}
for some special cases in which a distinguished metric on
$K_{\mathcal F}$ can be constructed even in the parabolic case,
besides the case of elliptic fibrations discussed in Example
\ref{fibrations} above.}
\end{remark}

\begin{remark} {\rm
According to general principles \cite{BDP}, once we know that
$K_{\mathcal F}$ is pseudoeffective we should try to understand its
discrepancy from being nef. There is on $X$ a unique maximal
countable collection of compact analytic subsets $\{ Y_j\}$ such
that $K_{\mathcal F}\vert_{Y_j}$ is {\it not} pseudoeffective. It
seems reasonable to try to develop the above theory in a
``relative'' context, by replacing $X$ with $Y_j$, and then to prove
something like this: every $Y_j$ is ${\mathcal F}$-invariant, and
the restriction of ${\mathcal F}$ to $Y_j$ is a foliation by
rational curves. Note, however, that the restriction of a foliation
to an invariant analytic subspace $Y$ is a dangerous operation.
Usually, we like to work with ``saturated'' foliations, i.e. with a
singular set of codimension at least two (see, e.g., the beginning
of the proof of Theorem \ref{parabolic} for the usefulness of this
condition). If $Z=Sing({\mathcal F})\cap Y$ has codimension one in
$Y$, this means that our ``restriction of ${\mathcal F}$ to $Y$'' is
not really ${\mathcal F}\vert_Y$, but rather its saturation.
Consequently, the canonical bundle of that restriction is not really
$K_{\mathcal F}\vert_Y$, but rather $K_{\mathcal
F}\vert_Y\otimes{\mathcal O}_Y(-{\mathcal Z})$, where ${\mathcal Z}$
is an effective divisor supported in $Z$. If $Z=Sing({\mathcal
F})\cap Y$ has codimension zero in $Y$ (i.e., $Y\subset
Sing({\mathcal F})$), the situation is even worst, because then
there is not a really well defined notion of restriction to $Y$.}
\end{remark}

\begin{remark} {\rm
The previous remark is evidently related to the problem of
constructing minimal models of foliations by curves, i.e. birational
models (on possibly singular varieties) for which the canonical
bundle is nef. In the projective context, results in this direction
have been obtained by McQuillan and Bogomolov \cite{BMQ} \cite{MQ2}.
From this birational point of view, however, we rapidly meet another
open and difficult problem: the resolution of the singularities of
${\mathcal F}$. A related problem is the construction of birational
models for which there are no vanishing ends in the leaves, compare
with Remark \ref{canonicalnef} above.}
\end{remark}

\begin{remark} {\rm
Finally, the pseudoeffectivity of $K_{\mathcal F}$ may be measured
by finer invariants, like Kodaira dimension or numerical Kodaira
dimension. When $\dim X=2$ then the picture is rather clear and
complete \cite{MQ1} \cite{Br1}. When $\dim X > 2$ then almost
everything seems open (see, however, the case of fibrations
discussed above). Note that already in dimension two the so-called
``abundance'' does not hold: there are foliations (Hilbert Modular
Foliations \cite{MQ1} \cite{Br1}) whose canonical bundle is
pseudoeffective, yet its Kodaira dimension is $-\infty$. The
classification of these exceptional foliations was the first
motivation for the plurisubharmonicity result of \cite{Br2}.}
\end{remark}


\begin{thebibliography}{100}
\bibitem[AWe]{AWe} H. Alexander, J. Wermer, {\sl Several complex
variables and Banach algebras}, Graduate Texts in Mathematics 35,
Springer Verlag (1998)
\bibitem[Ara]{Ara} S. Ju. Arakelov, {\sl Families of algebraic
curves with fixed degeneracies}, Izv. Akad. Nauk SSSR 35 (1971),
1269--1293
\bibitem[BPV]{BPV} W. Barth, C. Peters, A. Van de Ven, {\sl Compact
complex surfaces},  Ergebnisse der Mathematik (3) 4, Springer Verlag
(1984)
\bibitem[BeG]{BeG} E. Bedford, B. Gaveau, {\sl Envelopes of
holomorphy of certain 2-spheres in ${\mathbb C}^2$},  Amer. J. Math.
105 (1983), 975--1009
\bibitem[BeT]{BeT} E. Bedford, B. A. Taylor, {\sl The Dirichlet
problem for a complex Monge-Amp\`ere equation},  Invent. Math. 37
(1976), 1--44
\bibitem[Bis]{Bis} E. Bishop, {\sl Conditions for the analyticity of
certain sets}, Michigan Math. J. 11 (1964), 289--304
\bibitem[BMQ]{BMQ} F. A. Bogomolov, M. McQuillan, {\sl Rational
curves on foliated varieties}, Preprint IHES (2001)
\bibitem[BDP]{BDP} S. Boucksom, J.-P. Demailly, M. Paun, Th.
Peternell, {\sl The pseudo-effective cone of a compact K\"ahler
manifold and varieties of negative Kodaira dimension}, Preprint
(2004)
\bibitem[Br1]{Br1} M. Brunella, {\sl Foliations on complex projective
surfaces}, in Dynamical systems, Pubbl. Cent. Ric. Mat. Ennio De
Giorgi, SNS Pisa (2003), 49--77
\bibitem[Br2]{Br2} M. Brunella, {\sl Subharmonic variation of the
leafwise Poincar\'e metric}, Invent. Math. 152 (2003), 119--148
\bibitem[Br3]{Br3} M. Brunella, {\sl Plurisubharmonic variation
of the leafwise Poincar\'e metric}, Internat. J. Math. 14 (2003),
139--151
\bibitem[Br4]{Br4} M. Brunella, {\sl Some remarks on parabolic
foliations}, in Geometry and dynamics, Contemp. Math. 389 (2005),
91--102
\bibitem[Br5]{Br5} M. Brunella, {\sl A positivity property for
foliations on compact K\"ahler manifolds}, Internat. J. Math. 17
(2006), 35--43
\bibitem[Br6]{Br6} M. Brunella, {\sl On the plurisubharmonicity
of the leafwise Poincar\'e metric on projective manifolds}, J. Math.
Kyoto Univ. 45 (2005), 381--390
\bibitem[Br7]{Br7} M. Brunella, {\sl On entire curves tangent to a
foliation}, J. Math. Kyoto Univ. 47 (2007), 717--734
\bibitem[CLN]{CLN} C. Camacho, A. Lins Neto, {\sl Geometric theory
of foliations}, Birkh\"auser (1985)
\bibitem[CaP]{CaP} F. Campana, Th. Peternell, {\sl Cycle spaces},
in Several complex variables VII, Encyclopaedia Math. Sci. 74,
Springer Verlag (1994), 319--349
\bibitem[Che]{Che} K. T. Chen, {\sl Iterated path integrals}, Bull.
Amer. Math. Soc. 83 (1977), 831--879
\bibitem[Chi]{Chi} E. M. Chirka, {\sl Complex analytic sets},
Mathematics and its Applications 46, Kluwer (1989)
\bibitem[ChI]{ChI} E. M. Chirka, S. Ivashkovich, {\sl On
nonimbeddability of Hartogs figures into complex manifolds},  Bull.
Soc. Math. France  134 (2006), 261--267
\bibitem[Dem]{Dem} J.-P. Demailly, {\sl $L^2$ vanishing theorems for
positive line bundles and adjunction theory}, in Transcendental
methods in algebraic geometry (Cetraro, 1994), Lecture Notes in
Math. 1646 (1996), 1--97
\bibitem[Din]{Din} P. Dingoyan, {\sl Monge-Amp\`ere currents over
pseudoconcave spaces}, Math. Ann. 320 (2001), 211--238
\bibitem[For]{For} F. Forstneric, {\sl Polynomial hulls of sets
fibered over the circle}, Indiana Univ. Math. J. 37 (1988), 869--889
\bibitem[Ghy]{Ghy} E. Ghys, {\sl Laminations par surfaces de Riemann},
in Dynamique et g\'eom\'etrie complexes, Panor. Synth\`eses 8
(1999), 49--95
\bibitem[GuR]{GuR} R. C. Gunning, H. Rossi, {\sl Analytic functions of
several complex variables}, Prentice-Hall (1965)
\bibitem[Il1]{Il1} Ju. S. Il'yashenko, {\sl Foliations by analytic
curves}, Mat. Sb. (N.S.) 88 (130) (1972), 558--577
\bibitem[Il2]{Il2} Ju. S. Il'yashenko, {\sl Covering manifolds for
analytic families of leaves of foliations by analytic curves},
Topol. Methods Nonlinear Anal. 11 (1998), 361--373 (addendum: 23
(2004), 377--381)
\bibitem[Iv1]{Iv1} S. Ivashkovich, {\sl The Hartogs-type extension
theorem for meromorphic maps into compact K\"ahler manifolds},
Invent. Math. 109 (1992), 47--54
\bibitem[Iv2]{Iv2} S. Ivashkovich, {\sl Extension properties of
meromorphic mappings with values in non-K\"ahler complex manifolds},
Ann. of Math. 160 (2004), 795--837
\bibitem[IvS]{IvS} S. Ivashkovich, V. Shevchishin, {\sl Structure
of the moduli space in a neighborhood of a cusp-curve and
meromorphic hulls}, Invent. Math.  136 (1999), 571--602
\bibitem[Kiz]{Kiz} T. Kizuka, {\sl On the movement of the Poincar\'e
metric with the pseudoconvex deformation of open Riemann surfaces},
Ann. Acad. Sci. Fenn.  20 (1995), 327--331
\bibitem[Kli]{Kli} M. Klimek, {\sl Pluripotential theory},
London Mathematical Society Monographs 6, Oxford University Press
(1991)
\bibitem[Lan]{Lan} S. Lang, {\sl Introduction to complex hyperbolic
spaces}, Springer Verlag (1987)
\bibitem[MQ1]{MQ1} M. McQuillan, {\sl Non-commutative Mori theory},
Preprint IHES (2001)
\bibitem[MQ2]{MQ2} M. McQuillan, {\sl Semi-stable reduction of
foliations}, Preprint IHES (2005)
\bibitem[Miy]{Miy} Y. Miyaoka, {\sl Deformations of a morphism
along a foliation and applications}, in Algebraic geometry (Bowdoin
1985), Proc. Sympos. Pure Math. 46 (1987), 245--268
\bibitem[Moi]{Moi} B. G. Moishezon, {\sl On $n$-dimensional compact
varieties with $n$ algebraically independent meromorphic functions},
Amer. Math. Soc. Transl. 63 (1967), 51--177
\bibitem[Nap]{Nap} T. Napier, {\sl Convexity properties of coverings
of smooth projective varieties}, Math. Ann. 286 (1990), 433--479
\bibitem[Nis]{Nis} T. Nishino, {\sl Nouvelles recherches sur les
fonctions enti\`eres de plusieurs variables complexes I-V}, J. Math.
Kyoto Univ. 8, 9, 10, 13, 15 (1968-1975)
\bibitem[Ohs]{Ohs} T. Ohsawa, {\sl A note on the variation of
Riemann surfaces}, Nagoya Math. J. 142 (1996), 1--4
\bibitem[Ran]{Ran} R. M. Range, {\sl Holomorphic functions and
integral representations in several complex variables}, Graduate
Texts in Mathematics 108, Springer Verlag (1986)
\bibitem[ShB]{ShB} N. Shepherd-Barron, {\sl Miyaoka's theorems on
the generic seminegativity of $T_X$ and on the Kodaira dimension of
minimal regular threefolds}, in Flips and abundance for algebraic
threefolds, Ast\'erisque 211 (1992), 103--114
\bibitem[Siu]{Siu} Y. T. Siu, {\sl Techniques of extension of
analytic objects}, Lecture Notes in Pure and Applied Mathematics 8,
Marcel Dekker (1974)
\bibitem[Suz]{Suz} M. Suzuki, {\sl Sur les int\'egrales premi\`eres
de certains feuilletages analytiques complexes}, in Fonctions de
plusieurs variables complexes III (S\'em. Norguet 1975-77), Lecture
Notes in Math. 670 (1978) 53--79
\bibitem[Ya1]{Ya1} H. Yamaguchi, {\sl Sur le mouvement des constantes
de Robin}, J. Math. Kyoto Univ. 15 (1975), 53--71
\bibitem[Ya2]{Ya2} H. Yamaguchi, {\sl Parabolicit\'e d'une fonction
enti\`ere}, J. Math. Kyoto Univ. 16 (1976), 71--92
\bibitem[Ya3]{Ya3} H. Yamaguchi, {\sl Calcul des variations
analytiques}, Japan. J. Math. (N.S.) 7 (1981), 319--377
\end{thebibliography}
\end{document}